\theoremstyle{plain}
\newtheorem{thm}{Theorem}[section]
\newtheorem{prop}[thm]{Proposition}
\newtheorem{lem}[thm]{Lemma}
\theoremstyle{definition}
\newtheorem{exa}[thm]{Example}
\newtheorem{rem}[thm]{Remark}
\def\Im{\mathop{\mathrm{Im}}\nolimits}
\def\Ker{\mathop{\mathrm{Ker}}\nolimits}
\def\Coker{\mathop{\mathrm{Coker}}\nolimits}
\def\F{\mathop{\mathbb{F}}\nolimits}
\newcommand{\lra}{\longrightarrow}
\newcommand{\ra}{\rightarrow}
\newcommand{\Z}{{\Bbb Z}}
\newcommand{\D}{{\cal D}_{g}}
\newcommand{\DD}{{{\cal D}_{g}}}
\newcommand{\M}{{\cal M}}
\newcommand{\X}{\widetilde{X} }
\newcommand{\OX}{{\rm O}(X) }
\begin{document}
\large
\begin{center}{\bf\Large Central extensions of groups and adjoint groups of quandles }
\end{center}
\vskip 1.5pc
\begin{center}{\Large Takefumi Nosaka}\end{center}
\begin{abstract} \baselineskip=13pt \noindent
This paper develops an approach for describing centrally extended groups, as determining the adjoint
groups associated with quandles. Furthermore, we explicitly describe such groups of some quandles. As a
corollary, we determine some second quandle homology groups.
\end{abstract}

\baselineskip=15pt 
\section{Introduction}  
A quandle is a set with a binary operation $\lhd :X^2 \rightarrow X$ such that
\begin{enumerate}
\item The identity $ a \lhd a = a $ holds for any $a \in X $.
\item The map $ (\bullet \lhd a ): \ X \rightarrow X$ defined by $x \mapsto x \lhd a $ is bijective for any $a \in X$.
\item The identity $(a\lhd b)\lhd c=(a\lhd c)\lhd (b\lhd c)$ holds for any $a,b,c \in X. $
\end{enumerate}
The axioms are partially motivated by knot theory and braidings.
For example, any group is a quandle by the operation $a \lhd b:= b^{-1}ab;$
see \S \ref{Sin2} for other examples.
Conversely, for a quandle $X$, we can define {\it the adjoint group}
as the following group presentation:
$$ \mathrm{As}(X)= \langle \ e_x \ \ (x \in X )\ \ | \ \ e_{x\lhd y}^{-1} \cdot e_y^{-1} \cdot e_x \cdot e_y \ \ (x,y \in X)\ \ \rangle.$$
It is known that the correspondence $X \mapsto \mathrm{As} (X)$ yields a functor from the category of quandles to that of groups with left-adjointness.

Let us briefly explain some significance to determine $\mathrm{As} (X)$.
First, when the adjoint functor $F : \mathcal{C} \rightleftharpoons \mathcal{C}' : G$ is given, 
the difference between $\mathrm{id}_{\mathcal{C} } $ and $G \circ F$ is important in some areas (see \cite{Ger,Nos3} for the quantum representations, or see \cite{AG,CJKLS,FRS1,Kab,Joy,Nos1} and references therein for knot-invariants and pointed Hopf algebras), with a relation to centrally extended groups.
Furthermore, as a result of Eisermann \cite{Eis2} (see also Theorem \ref{ehyouj2i3}),
the second quandle homology $H_2^Q(X) $ can be computed from concrete expressions of $\mathrm{As}(X) $.
Moreover, the main theorem in \cite{Nos2} shows that
certain universal knot-invariants from quandles turn out to be characterized by the third group homology $ H_3^{\mathrm{gr}}( \mathrm{As} (X)) $.
However, as we see the definition of $\mathrm{As}(X)$ or some explicit computation \cite{Cla2}, it has been considered to be hard to deal with $\mathrm{As} (X)$ concretely.

In this paper, we develop a method for formulating practically $\mathrm{As} (X)$ in a purely algebraic way.
This method is roughly summarized to `universal central extensions of groups modulo type-torsion' (see \S \ref{s3s3w}--\ref{SS3j3});
the main theorem \ref{cccor1} emphasizes importance of the concept of types.
Furthermore, Section \ref{Sin2} demonstrates practical applications of the method.
Actually, we succeed in determining $\mathrm{As} (X)$ and the associated homology $H_2^Q(X) $ of some quandles $X$ (up to torsion).
As a special case, Subsection \ref{Ex5} compares the theorem with Howlett's theorem \cite{How} concerning the Schur multipliers of Coxeter groups.
Furthermore, in Section \ref{SScon32}, we will see that the method is applicable to coverings in quandle theory.

\vskip 0.7pc
\noindent
{\bf Notation and convention.}
For a group $G$, we denote by $H_n^{\rm gr}(G)$
the usual group homology in trivial integral coefficients.
Moreover, a homomorphism $f:A \rightarrow B$ between abelian groups is said to be
{\it a $[1/N]$-isomorphism} and is denoted by $ f: A \cong_{[1/N]} B $,
if the localization of $f$ at $\ell$ is an isomorphism for any prime $\ell$ that does not divide $N$.
This paper does not need any basic knowledge in quandle theory, but assumes basic facts of group cohomology as in \cite[Sections I, II and VII]{Bro}.

\section{Preliminaries and the main theorem}\label{s3s3w}
This section aims to state Theorem \ref{cccor1}.
We start by reviewing properties of quandles.
A quandle $X$ is said to be {\it of type $t_X $}, if $t_X >0$ is the minimal $N$ such that $x= x \lhd^N y $ for any $x,y \in X$,
where we denote by $\bullet \lhd^N y$ the $N $-times on the right operation with $y$.
Note that, if $X$ is of finite order, it is of type $t_X $ for some $t_X \in \Z $.

Next, let us study the adjoint group $\mathrm{As} (X)$ in some details.
Define a right action $\mathrm{As}(X)$ on $X$ by $ x \cdot e_y:= x \lhd y$ for $x,y \in X$.
Notice the equality
\begin{equation}\label{hasz2} e_{x \cdot g} = g^{-1} e_x g \in \mathrm{As} (X) \ \ \ \ \ \ \ (x \in X , \ \ g \in \mathrm{As} (X) ), \end{equation}
by definitions.
The orbits of this action of $\mathrm{As}(X)$ on $X$ are called {\it connected components of $X$}, denoted by $\mathrm{O}(X).$
For $i \in \mathrm{O}(X)$, we let $X_i \subset X$ be the orbit with respect to $i$.
If the action is transitive (i.e., $\mathrm{O}(X)$ is a singleton), $X$ is said to be {\it connected}.
Furthermore, with respect to $i \in \OX $, define a homomorphism
\begin{equation}\label{epsilon} \displaystyle{ \varepsilon_i : \mathrm{As} (X) \lra \Z } \ \ \ \ \textrm{ by } \left\{ \begin{array}{ll}
\displaystyle{ \varepsilon_i ( e_x)=1 \in \Z }, &\ \mathrm{if} \ \ x \in X_i, \\
\displaystyle{ \varepsilon_i ( e_x)=0\in \Z }, &\ \mathrm{if} \ \ x \in X \setminus X_i.\\
\end{array} \right. \ \end{equation}
Note that the direct sum $\oplus_{i \in \OX} \varepsilon_i$ yields the abelianization $\mathrm{As}(X)_{\rm ab} \cong \Z^{\oplus \OX}$ by \eqref{hasz2}, which means that the group $\mathrm{As}(X)$ is of infinite order.
Furthermore, if $O(X)$ is single, we often omit writing the index $i$.

In addition, we briefly review
{\it the inner automorphism group}, $\mathrm{Inn}(X)$, of a quandle $X$.
Regard the action of $\mathrm{As} (X)$ as a group homomorphism $\psi_X$ from $\mathrm{As}(X)$ to the symmetric group $\mathrm{Bij}(X,X)$.
The group $\mathrm{Inn}(X) $ is defined as the image $\mathrm{Im}(\psi_X ) \subset \mathrm{Bij}(X,X) $.
Hence, we have a group extension
\begin{equation}\label{AI} 0 \lra \Ker (\psi_X) \lra \mathrm{As} (X) \xrightarrow{\ \psi_X \ } \mathrm{Inn}(X) \lra 0 \ \ \ \ \ (\mathrm{exact}).
\end{equation}
By the equality \eqref{hasz2}, this kernel $ \Ker (\psi_X)$ is contained in the center.
Therefore, it is natural to focus on their second group homology; we show a theorem on $ H_2^{\rm gr} (\mathrm{As}(X)) $ as a useful estimate:

\begin{thm}\label{cccor1}
For any connected quandle $X$ of type $t_X $ (possibly, $X$ could be of infinite order), the second group homology $ H_2^{\rm gr} (\mathrm{As}(X)) $ is annihilated by $t_X $.
Furthermore, the abelian kernel $\Ker (\psi_X)$ in \eqref{AI} is $[1/t_X]$-isomorphic to $\Z \oplus H_2^{\rm gr} (\mathrm{Inn} (X)) $.
\end{thm}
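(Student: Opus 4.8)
The plan is to treat the two assertions in turn, deriving the second from the first. Fix a base point $x_0 \in X$. Since $X$ is connected, \eqref{hasz2} shows that every $e_x$ is an $\As(X)$-conjugate of $e_{x_0}$, so $\As(X)_{\rm ab} \cong \Z$ and $\varepsilon := \varepsilon_{x_0}$ is the abelianization. The first step is to isolate a central element: iterating the defining relation $e_{x\lhd y} = e_y^{-1} e_x e_y$ gives $e_y^{-N} e_x e_y^{N} = e_{x \lhd^N y}$ for all $N \geq 0$, so taking $N = t_X$ yields $e_y^{-t_X} e_x e_y^{t_X} = e_x$ for every $x$; hence $z := e_y^{t_X}$ is central in $\As(X)$, it lies in $\Ker(\psi_X)$ because $x\cdot e_y^{t_X}=x\lhd^{t_X}y=x$, and conjugating it by $g\in\As(X)$ together with \eqref{hasz2} gives $e_{y\cdot g}^{t_X}=g^{-1}e_y^{t_X}g=e_y^{t_X}$, so by connectedness $z$ does not depend on $y$. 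Note $\varepsilon(z)=t_X$, so $\langle z\rangle\cong\Z$.

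For the first assertion I would use Hopf's formula. Write $\As(X)=F/R$ with $F$ free on $\{e_x:x\in X\}$ and $R$ the normal closure of the words $r_{x,y}:=e_{x\lhd y}^{-1}e_y^{-1}e_xe_y$. Then
\begin{equation*}
H_2^{\rm gr}(\As(X)) = \frac{R\cap[F,F]}{[F,R]} = \Ker\Bigl( R/[F,R] \xrightarrow{\ d\ } F_{\rm ab} \Bigr),
\end{equation*}
where $R/[F,R]$ is an abelian group generated by the classes $\rho_{x,y}$ of the $r_{x,y}$, with $d(\rho_{x,y})=[e_x]-[e_{x\lhd y}]$. Unwinding the iteration above at the level of $F$ (peeling off one relator at each step) expresses $e_y^{-t_X}e_xe_y^{t_X}e_x^{-1}$ as a product of $F$-conjugates of $r_{x\lhd^i y,\,y}$ for $0\leq i<t_X$, and hence yields in $R/[F,R]$ the telescoping identity $\sigma_{x,y}:=\sum_{i=0}^{t_X-1}\rho_{x\lhd^i y,\,y}=\overline{[z,e_x]}$, which lies in $\Ker d$ since $d(\sigma_{x,y})=[e_x]-[e_{x\lhd^{t_X}y}]=0$. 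The heart of the argument is then to show, using these telescoping identities, the syzygies among the relators $r_{x,y}$ coming from the quandle axiom (iii), and connectedness (which lets one transport every $\rho_{\bullet,y}$ into the orbit of $x_0$ and uses that there is a single central $z$ with $\varepsilon(z)=t_X$), that $\Ker d$ is annihilated by $t_X$. I expect this — pinning down the presentation of $R/[F,R]$ precisely enough to obtain the \emph{sharp} annihilator $t_X$ rather than merely a power of it — to be the principal obstacle. (A softer route yields a power of $t_X$ cheaply: applying Ganea's exact sequence to the central extension $1\to\langle z\rangle\to\As(X)\to\bar G\to1$ with $\bar G:=\As(X)/\langle z\rangle$ shows $H_2^{\rm gr}(\As(X))\twoheadrightarrow H_2^{\rm gr}(\bar G)$ with kernel a quotient of $\langle z\rangle\otimes H_1^{\rm gr}(\bar G)\cong\Z/t_X$, reducing matters to $H_2^{\rm gr}(\bar G)$ for the group $\bar G$ in which every $e_x$ has order dividing $t_X$; this is enough for the localized statement below but not for the exact bound.)

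Granting the first assertion, the second is formal. Apply the five-term exact sequence to the central extension \eqref{AI}: since $\Ker(\psi_X)$ is central (hence an abelian group with trivial $\mathrm{Inn}(X)$-action), it reads
\begin{equation*}
H_2^{\rm gr}(\As(X)) \xrightarrow{\ \psi_*\ } H_2^{\rm gr}(\mathrm{Inn}(X)) \xrightarrow{\ \partial\ } \Ker(\psi_X) \xrightarrow{\ j\ } \As(X)_{\rm ab} \xrightarrow{\ \alpha\ } \mathrm{Inn}(X)_{\rm ab} \to 0 .
\end{equation*}
Here $\As(X)_{\rm ab}\cong\Z$ is generated by $[e_{x_0}]$, and $\alpha$ sends $[e_{x_0}]$ to $[\psi_X(e_{x_0})]$, an element of order dividing $t_X$ because $\psi_X(e_{x_0})^{t_X}=\psi_X(z)=\mathrm{id}$; hence $\operatorname{im}j=\Ker\alpha$ is a nonzero subgroup of $\Z$, so $\operatorname{im}j\cong\Z$. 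Exactness thus gives a short exact sequence $0\to\operatorname{coker}(\psi_*)\to\Ker(\psi_X)\to\operatorname{im}j\to0$, which splits because $\operatorname{im}j$ is free; therefore $\Ker(\psi_X)\cong\Z\oplus\operatorname{coker}(\psi_*)$. Finally, by the first assertion $\operatorname{im}(\psi_*)$ is annihilated by $t_X$, so the natural surjection $H_2^{\rm gr}(\mathrm{Inn}(X))\to\operatorname{coker}(\psi_*)$ becomes an isomorphism after localizing at any prime not dividing $t_X$ — i.e. it is a $[1/t_X]$-isomorphism — and consequently $\Ker(\psi_X)\cong\Z\oplus\operatorname{coker}(\psi_*)\cong_{[1/t_X]}\Z\oplus H_2^{\rm gr}(\mathrm{Inn}(X))$, as desired.
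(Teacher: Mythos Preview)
Your derivation of the second assertion from the first, via the five-term (inflation--restriction) sequence of \eqref{AI}, is correct and is exactly what the paper does. The gap is in the first assertion: you set up Hopf's formula and produce the telescoping elements $\sigma_{x,y}\in\Ker d$, but you do not actually prove that $\Ker d$ is annihilated by $t_X$, and you yourself flag this as the principal obstacle. Knowing particular elements of $\Ker d$ that equal $\overline{[z,e_x]}$ does not by itself bound the exponent of all of $\Ker d$; one would still need to show that these, together with the syzygies coming from axiom~(iii), generate $\Ker d$, and the proposal supplies no argument for that. Your ``softer route'' via Ganea's sequence is not a substitute either, since as you note it does not give the sharp bound $t_X$.

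The paper sidesteps this obstacle by passing through the rack space $BX$, a CW complex with $\pi_1(BX)\cong\As(X)$, so that the inclusion $c:BX\hookrightarrow K(\As(X),1)$ induces a \emph{surjection} $c_*:H_2(BX)\to H_2^{\rm gr}(\As(X))$ by Hopf's theorem. It then writes down an explicit chain map $c_2:C_2^R(X)\to C_2^{\rm gr}(\As(X))$ realizing $c_*$ (Kabaya's formula $c_2(x,y)=(e_x,e_y)-(e_y,e_{x\lhd y})$), and exhibits concrete homomorphisms $h_1,h_2$ satisfying $h_1\partial_2^R-\partial_3^{\rm gr}h_2=t_X\cdot c_2$; the verification is a direct bar-complex computation whose last step uses exactly the identity $e_x^{t_X}=e_y^{t_X}$ that you isolate. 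Hence $t_X\cdot c_*=0$, and surjectivity gives the sharp annihilator at once. Your telescope is morally the same chain homotopy transported to $R/[F,R]$, but working in the bar complex lets the paper certify null-homotopy of $t_X\cdot c_2$ by a closed-form calculation rather than having to analyse generators of the full relation module $\Ker d$.
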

The proof will appear in \S \ref{kokoroni}. 
In conclusion,
metaphorically speaking, $\mathrm{As}(X)$ turns out to be the `universal central extension' of $ \mathrm{Inn}(X)$ up to $t_X $-torsion;
hence,
this theorem emphasizes importance of the concept of types; so as to investigate $\mathrm{As}(X)$, we shall study $\mathrm{Inn}(X)$ and $ H_2^{\rm gr}(\mathrm{Inn}(X))$. 

\section{Methods on inner automorphism groups. }\label{SS3j3}
Following the previous theorem to study the group $\mathrm{As}(X)$,
we shall develop a method for describing the inner automorphism group $\mathrm{Inn}(X)$:
\begin{thm}\label{keykantan1g}
Let a group $G$ act on a quandle $X$.
Let a map $\kappa : X \rightarrow G$ satisfy the followings:
\begin{enumerate}
\item The identity $ x \lhd y = x \cdot \kappa (y) \in X$ holds for any $x,y \in X$.
\item The image $\kappa (X) \subset G$ generates the group $G$, and the action $X \curvearrowleft G$ is effective.
\end{enumerate}
Then, there is an isomorphism $\mathrm{Inn}(X) \cong G$, and the action $X \curvearrowleft G $ agrees with
the natural action of $ \mathrm{Inn}(X)$.
\end{thm}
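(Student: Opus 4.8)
The plan is to exhibit an explicit surjection $\As(X) \twoheadrightarrow G$ and show it descends to an isomorphism from $\mathrm{Inn}(X)$. First I would use the universal property implicit in the presentation of $\As(X)$: since $x \lhd y = x \cdot \kappa(y)$ by hypothesis (I), the assignment $e_y \mapsto \kappa(y)$ respects the defining relations $e_{x \lhd y}^{-1} e_y^{-1} e_x e_y$ — indeed, acting on any $z \in X$, both $e_{x \lhd y}$ and $e_y^{-1} e_x e_y$ send $z$ to the same point, and since the action of $G$ on $X$ is effective (hypothesis (II)) the relation holds in $G$ itself, not merely in its image on $X$. Wait — more carefully: to check the relation in $G$ I need that $\kappa(x \lhd y) = \kappa(y)^{-1} \kappa(x) \kappa(y)$ in $G$; this follows because both sides act identically on all of $X$ (using (I) repeatedly and associativity of the action) and the action is effective. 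Hence there is a well-defined homomorphism $\Phi : \As(X) \ra G$, $e_y \mapsto \kappa(y)$, which is surjective because $\kappa(X)$ generates $G$ by (II).

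Next I would identify $\Phi$ with $\psi_X$ up to the isomorphism being constructed. The action of $\As(X)$ on $X$ is given by $z \cdot e_y = z \lhd y = z \cdot \kappa(y) = z \cdot \Phi(e_y)$, so the $\As(X)$-action on $X$ factors through $\Phi$ followed by the $G$-action; that is, $\psi_X = (\text{action of } G) \circ \Phi$. Since the $G$-action on $X$ is effective, $G \hookrightarrow \mathfrak{S}_X$, so $\mathrm{Inn}(X) = \Im(\psi_X) = \Im(\text{$G$-action} \circ \Phi)$. Because $\Phi$ is surjective, this image is exactly the image of the $G$-action, which is $G$ viewed inside $\mathfrak{S}_X$. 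Thus $\mathrm{Inn}(X) \cong G$, and chasing the identifications shows the abstract isomorphism carries the natural $\mathrm{Inn}(X)$-action on $X$ to the given action $X \curvearrowleft G$.

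The step I expect to be the main obstacle — really the only point requiring care — is verifying that $\kappa(x \lhd y) = \kappa(y)^{-1}\kappa(x)\kappa(y)$ as an identity in $G$ (not just modulo the kernel of the action). The resolution is precisely the effectiveness clause in (II): it suffices to check equality after applying both sides to an arbitrary $z \in X$, and there the computation is a short manipulation using (I) together with the quandle identity (iii) and the fact that $\cdot$ is a right action. Concretely, for $z \in X$ one computes $z \cdot \kappa(x \lhd y) = z \lhd (x \lhd y)$ and $z \cdot \kappa(y)^{-1}\kappa(x)\kappa(y) = ((z \lhd^{-1} y) \lhd x) \lhd y$, and these agree by axiom (iii) (self-distributivity, applied in the form that conjugation-type relations hold); effectiveness then upgrades this to an equality in $G$. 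Once that is in hand, everything else is formal diagram-chasing with the presentation of $\As(X)$ and the definition of $\mathrm{Inn}(X)$, so I would keep those parts brief.
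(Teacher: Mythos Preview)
Your proposal is correct and follows essentially the same route as the paper: construct the surjection $\As(X)\to G$ by $e_y\mapsto\kappa(y)$, verify the defining relation $\kappa(x\lhd y)=\kappa(y)^{-1}\kappa(x)\kappa(y)$ via axiom (iii) and effectivity, and then factor $\psi_X$ through the effective embedding $G\hookrightarrow\mathfrak{S}_X$. The only cosmetic difference is that the paper checks the equivalent identity $\kappa(x)\kappa(y)=\kappa(y)\kappa(x\lhd y)$ directly from $(z\lhd x)\lhd y=(z\lhd y)\lhd(x\lhd y)$, avoiding any mention of $\lhd^{-1}$.
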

\begin{proof}
Identify the action $X \curvearrowleft G$ with a group homomorphism $f: G \rightarrow \mathrm{Bij}(X,X) $.
It follows from the first assumption that $f (\kappa(X)) \subset \mathrm{Inn}(X)$ and $f (\kappa(X)) $ generates $\mathrm{Inn}(X)$;
thus, $f$ gives rise to an epimorphism $ F: \langle \kappa(X)\rangle \rightarrow \mathrm{Inn}(X)$,
where $\langle \kappa(X)\rangle $ is the subgroup of $G$ generated by $\kappa(X)$.
Then, the secibd assumption assures that $ \langle \kappa(X)\rangle =G$, and the second implies the bijectivity of $F$,
i.e., $ \mathrm{Inn}(X) \cong G$. Moreover, the agreement of the two actions follows by construction.
\end{proof}
This theorem is applicable to many quandles, in practice. Actually, as seen in Section \ref{Sin2}, we can determine $ \mathrm{Inn}(X)$ of many quandles $X$.
However, we here explain that this theorem is inspired by the Cartan embeddings in symmetric space theory as follows:
\begin{exa}\label{lemspsoinn2}
Let $X$ be a symmetric space in differential geometry. 
Consider the group $\mathrm{Inn}(X) \subset \mathrm{Diff}(X) $ generated by the symmetries $\bullet \lhd y $ with compact-open topology.
As is well known, $\mathrm{Inn}(X)$ has a Lie group structure, and
the map $ X \rightarrow \mathrm{Inn}(X)$ that sends $y$ to $s_y$ is commonly called the Cartan embedding.
As seen in textbooks on symmetric spaces, Theorem \ref{keykantan1g} had been used to determine $\mathrm{Inn}(X)$ concretely.
\end{exa}
Furthermore, we suggest another computation when $\mathrm{Inn}(X) $ is perfect. 
\begin{prop}\label{vanithm2}
Let $X$ be a quandle, and $O(X)$ be the set of orbits of the action $X \curvearrowleft \mathrm{As}(X)$. 
Set the epimorphism $\varepsilon_i: \mathrm{As}(X)\rightarrow \Z $ associated with $i \in O(X)$ defined in \eqref{epsilon}.
If the group $\mathrm{Inn}(X)$ is perfect, i.e., $H_1^{\mathrm{gr}}(\mathrm{Inn}(X)) =0$,
then we have an isomorphism
\begin{equation}\label{asin} \mathrm{As} (X) \cong \Ker (\oplus_{i \in O(X)}\varepsilon_i )\times\Z^{\oplus O(X)}, \end{equation}
and this $ \Ker (\oplus_{i \in O(X)}\varepsilon_i ) $ is a central extension of $\mathrm{Inn}(X) $ and is perfect.
In particular, if $X$ is connected and the group homology $H_2^{\mathrm{gr}} ( \mathrm{Inn}(X) )$ vanishes, then $\mathrm{As} (X) \cong \mathrm{Inn}(X) \times \Z$.
\end{prop}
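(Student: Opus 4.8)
The plan is to peel off from $\As(X)$ a \emph{central} free–abelian direct factor isomorphic to $\Z^{\oplus O(X)}$, and then read off the remaining claims almost formally. Write $q\colon\As(X)\to\As(X)_{\mathrm{ab}}$ for the abelianization; by the discussion after \eqref{epsilon} it is realized by $\oplus_i\varepsilon_i$ with target $\Z^{\oplus O(X)}$, so $P:=\Ker(\oplus_i\varepsilon_i)=[\As(X),\As(X)]=\Ker q$.

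First I would observe that, \emph{because} $\mathrm{Inn}(X)$ is perfect, the central subgroup $\Ker(\psi_X)$ of \eqref{AI} already maps \emph{onto} $\As(X)_{\mathrm{ab}}$. Indeed $\psi_X$ is surjective and carries $[\As(X),\As(X)]$ onto $[\mathrm{Inn}(X),\mathrm{Inn}(X)]=\mathrm{Inn}(X)$, so $P\cdot\Ker(\psi_X)$ is the whole $\psi_X$–preimage of $[\mathrm{Inn}(X),\mathrm{Inn}(X)]$, i.e.\ $P\cdot\Ker(\psi_X)=\As(X)$; applying $q$ gives $q(\Ker\psi_X)=\As(X)_{\mathrm{ab}}$. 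Since $\Z^{\oplus O(X)}$ is free abelian, the resulting epimorphism $q|_{\Ker\psi_X}$ splits, and composing a splitting with $\Ker\psi_X\hookrightarrow\As(X)$ produces $s\colon\Z^{\oplus O(X)}\to\As(X)$ with $q\circ s=\mathrm{id}$ and image $C:=s(\Z^{\oplus O(X)})$ central in $\As(X)$. Then $P=\Ker q$ and $C$ are normal, meet trivially (an element of $C$ lying in $\Ker q$ is trivial since $q\circ s=\mathrm{id}$) and generate $\As(X)$ (for $g\in\As(X)$ one has $g\,s(q(g))^{-1}\in\Ker q=P$), whence $\As(X)=P\times C\cong P\times\Z^{\oplus O(X)}$, which is \eqref{asin}.

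Next, restricting $\psi_X$ to $P$ gives an epimorphism onto $[\mathrm{Inn}(X),\mathrm{Inn}(X)]=\mathrm{Inn}(X)$ with kernel $P\cap\Ker\psi_X$, which is central in $\As(X)$; hence $P$ is a central extension of $\mathrm{Inn}(X)$. Taking commutators in the product $\As(X)=P\times\Z^{\oplus O(X)}$ yields $[\As(X),\As(X)]=[P,P]$, and the left side equals $P$ by construction, so $P=[P,P]$ is perfect. For the final clause, assume $X$ connected; then $O(X)$ is a singleton and the isomorphism above reads $\As(X)\cong P\times\Z$, so it suffices to see $P\cong\mathrm{Inn}(X)$ when $H_2^{\gr}(\mathrm{Inn}(X))=0$. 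Let $A$ be the (central) kernel of $P\twoheadrightarrow\mathrm{Inn}(X)$ and apply the five–term homology exact sequence to $1\to A\to P\to\mathrm{Inn}(X)\to1$: the portion $H_2^{\gr}(\mathrm{Inn}(X))\to A\to H_1^{\gr}(P)$ has both ends zero — by hypothesis and by perfectness of $P$ — so $A=0$, $P\cong\mathrm{Inn}(X)$, and $\As(X)\cong\mathrm{Inn}(X)\times\Z$.

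I do not expect a real obstacle: everything is bookkeeping around the central extension \eqref{AI} and the abelianization of $\As(X)$, the only external inputs being that a surjection onto a free abelian group splits and the low–degree exact sequence in group homology (both standard, cf.\ \cite{Bro}). The one step that genuinely carries the argument — and must be set up carefully — is the observation that perfectness of $\mathrm{Inn}(X)$ already forces the central kernel $\Ker(\psi_X)$ to surject onto $\As(X)_{\mathrm{ab}}\cong\Z^{\oplus O(X)}$; granted that, the central complement and all subsequent assertions follow automatically.
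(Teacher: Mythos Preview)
Your proof is correct and follows essentially the same line as the paper's: both observe that perfectness of $\mathrm{Inn}(X)$ forces the central kernel $\Ker(\psi_X)$ to surject onto $\As(X)_{\mathrm{ab}}\cong\Z^{\oplus O(X)}$, split this surjection by freeness to obtain a central section, and read off the direct product and the perfectness of $P$. Your write-up is more explicit than the paper's (you compute $[P,P]$ directly where the paper invokes K\"unneth, and you supply the five-term-sequence argument for the final clause, which the paper leaves tacit), but the underlying argument is the same.
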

\begin{proof}We will show the isomorphism \eqref{asin}. 
By the assumption $H_1^{\mathrm{gr} }( \mathrm{Inn}(X)) =0$, 
the composite $ \Ker ( \psi_X ) \hookrightarrow \mathrm{As}(X) \stackrel{\mathrm{proj.}}{\lra} H_1^{\mathrm{gr} }(\mathrm{As}(X)) =\Z^{\oplus O(X)} $ obtained from \eqref{AI}
is surjective.
Since $ \Z^{\oplus O(X)}$ is free, we can choose a section $\mathfrak{s}: \Z^{\oplus O(X)} \rightarrow \Ker ( \psi_X )$ of the composite. Hence, by the equality \eqref{hasz2} and the inclusion $ \Ker ( \psi_X ) \subset \mathrm{As}(X) $, the semi-direct product $\mathrm{As}(X) \cong \Ker (\oplus_{i \in O(X)}\varepsilon_i ) \rtimes \Z^{\oplus O(X)} $ is trivial, leading to \eqref{asin} as desired.
Furthermore the kernel $ \Ker (\oplus_{i \in O(X)}\varepsilon_i ) $ is a central extension of $\mathrm{Inn}(X) $ by construction, and is perfect by the Kunneth theorem and $\mathrm{As}(X)_{\rm ab} \cong \Z^{\oplus O(X)}$.
Hence we complete the proof.
\end{proof}
\begin{rem}\label{df032}In general, the kernel $ \Ker (\oplus_{i \in O(X)}\varepsilon_i ) $ is not always the universal central extension of the perfect group $\mathrm{Inn}(X)$;
see \cite[Theorem 4]{Nos3} with $g=3$ as a counterexample such that the extension $ \Ker (\oplus_{i \in O(X)}\varepsilon_i ) \rightarrow \mathrm{Inn}(X)$ is not universal.
\end{rem}

Finally, we conclude this section by giving two lemmas on $\mathrm{As}(X)$, which are used later.
\begin{lem}\label{daiji} Let $X$ be a connected quandle of type $t < \infty $.
Then, for any $x,y \in X$, we have the identity $ (e_x)^{t}=(e_y)^{t}$ in the center of $\mathrm{As}(X)$.
\end{lem}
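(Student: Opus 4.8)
The plan is to prove the lemma in two short steps: first, to show that each power $(e_x)^{t}$ lies in $\Ker(\psi_X)$, hence is central by the observation following \eqref{AI}; and second, to use connectivity of $X$ together with the conjugation formula \eqref{hasz2} to conclude that these central elements all coincide.

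For the first step, I would unwind the definition of the right action of $\As(X)$ on $X$. Since $x \cdot e_y = x \lhd y$, iterating gives $x \cdot (e_y)^{t} = x \lhd^{t} y$ for every $x \in X$. By hypothesis $X$ is of type $t$, so $x \lhd^{t} y = x$ for all $x, y \in X$; thus $(e_y)^{t}$ acts as the identity on $X$, i.e. $(e_y)^{t} \in \Ker(\psi_X)$ for every $y \in X$. As recorded after \eqref{AI}, the kernel $\Ker(\psi_X)$ is contained in the center of $\As(X)$, so each $(e_y)^{t}$ is a central element.

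For the second step, fix $x, y \in X$. Since $X$ is connected, the action of $\As(X)$ on $X$ is transitive, so there exists $g \in \As(X)$ with $x \cdot g = y$. Then \eqref{hasz2} yields $e_y = e_{x \cdot g} = g^{-1} e_x g$, and therefore $(e_y)^{t} = g^{-1} (e_x)^{t} g$. But $(e_x)^{t}$ is central by the first step, so $g^{-1} (e_x)^{t} g = (e_x)^{t}$, giving $(e_y)^{t} = (e_x)^{t}$ as claimed.

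I do not expect a genuine obstacle here: the argument is a direct combination of the type condition, the conjugation identity \eqref{hasz2}, and the centrality of $\Ker(\psi_X)$. The only point needing a moment's care is the bookkeeping in the first step, namely that the $t$-th power of a \emph{single} generator $e_y$ corresponds to iterating the one operation $\bullet \lhd y$ (rather than mixing different right-multipliers), which is exactly what the definition of type $t$ supplies.
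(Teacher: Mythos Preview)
Your proof is correct and follows essentially the same route as the paper: both establish centrality of $(e_x)^t$ (you via $\Ker(\psi_X)$, the paper by the equivalent direct computation $(e_x)^{-t} e_b (e_x)^{t} = e_{b \lhd^t x} = e_b$), and then both use connectivity together with \eqref{hasz2} to conclude that all the $(e_x)^t$ coincide. The only cosmetic difference is that you quote the centrality of $\Ker(\psi_X)$ from the text after \eqref{AI}, while the paper re-derives it on the spot.
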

\begin{proof}For every $b \in X$, note the equalities $(e_x)^{-t} e_b e_x^{t} = e_{(\cdots (b \lhd x ) \cdots )\lhd x} = e_b $ in $\mathrm{As}(X)$.
Namely $ (e_x)^{t}$ lies in the center. Furthermore the connectivity admits
$ g \in \mathrm{As}(X)$ such that $ x \cdot g = y$.
Hence, it follows from \eqref{hasz2} that $(e_x)^{t} = g^{-1} (e_x)^{t} g = (e_{ x \cdot g} ) ^{t}= (e_y)^{t} $ as desired.
\end{proof}
\begin{lem}\label{lem11}
Let $X $ be a connected quandle of finite order.
Then its type $t_X $ is a divisor of $ |\mathrm{Inn}(X)| /|X|$.
\end{lem}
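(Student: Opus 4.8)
The plan is to combine the orbit–stabilizer theorem with the elementary observation that, for a connected quandle, the inner automorphism $\bullet\lhd x$ fixes $x$ and has order \emph{exactly} $t_X$.

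First, since $X$ is connected, the natural action $X \curvearrowleft \mathrm{Inn}(X)$ is transitive, so for any fixed $x \in X$ the orbit–stabilizer theorem yields $|\mathrm{Inn}(X)|/|X| = |\mathrm{Stab}(x)|$, where $\mathrm{Stab}(x) \subset \mathrm{Inn}(X)$ denotes the isotropy subgroup of $x$. Hence it suffices to prove that $t_X$ divides $|\mathrm{Stab}(x)|$, and by Lagrange's theorem it is enough to exhibit a single element of $\mathrm{Stab}(x)$ whose order is $t_X$. The natural candidate is $\sigma_x := \psi_X(e_x) \in \mathrm{Inn}(X)$, i.e.\ the permutation $z \mapsto z \lhd x$: by quandle axiom (i) we have $x \lhd x = x$, so $\sigma_x$ indeed lies in $\mathrm{Stab}(x)$.

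It then remains to check that $\sigma_x$ has order precisely $t_X$. By \eqref{hasz2} and connectivity, all the generators $e_y$ ($y \in X$) are mutually conjugate in $\As(X)$; applying $\psi_X$, all the $\sigma_y$ are conjugate in $\mathrm{Inn}(X)$ and therefore share one common order $m$. On the other hand, by definition $t_X$ is the least positive integer $N$ with $z = z \lhd^N y$ for all $z,y$, i.e.\ with $\sigma_y^N = \mathrm{id}$ for every $y$; thus $t_X = \mathrm{lcm}_{y}\,\mathrm{ord}(\sigma_y) = m$, so $\mathrm{ord}(\sigma_x) = t_X$. Consequently $\langle \sigma_x \rangle$ is a cyclic subgroup of $\mathrm{Stab}(x)$ of order $t_X$, giving $t_X \mid |\mathrm{Stab}(x)| = |\mathrm{Inn}(X)|/|X|$, as required.

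The only step that is more than bookkeeping is the claim that $\mathrm{ord}(\sigma_x)$ equals $t_X$ and not merely a proper divisor of it; this is exactly where connectedness is indispensable, since for a disconnected quandle distinct orbits may force different ``local'' orders and the divisibility statement can genuinely fail. Everything else (transitivity, orbit–stabilizer, Lagrange, the conjugacy of the $e_y$) is immediate from the material already set up in Section \ref{s3s3w}.
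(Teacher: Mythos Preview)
Your proof is correct. Both you and the paper reduce, via orbit--stabilizer for the transitive action $X\curvearrowleft\mathrm{Inn}(X)$, to showing $t_X \mid |\mathrm{Stab}(x)|$, but the key step is organized differently. The paper introduces, for every pair $x,y$, the cycle length $m_{x,y}$ (the least $n$ with $x\lhd^n y=x$), notes that $(\bullet\lhd^{m_{x,y}} y)\in\mathrm{Stab}(x)$, and from this claims that each $m_{x,y}$ divides $|\mathrm{Stab}(x)|$; since $t_X$ is the lcm of the $m_{x,y}$, it then divides as well. You instead single out the element $\sigma_x=(\bullet\lhd x)$ itself: axiom~(i) places it inside $\mathrm{Stab}(x)$, and the conjugacy of all the $\sigma_y$ (from connectivity via \eqref{hasz2}) forces $\mathrm{ord}(\sigma_x)=t_X$ exactly, so Lagrange finishes at once. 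Your route is a little more direct, makes the role of the idempotency axiom explicit, and avoids the somewhat elliptical passage in the paper from ``$\sigma_y^{m_{x,y}}\in\mathrm{Stab}(x)$'' to ``$m_{x,y}\mid|\mathrm{Stab}(x)|$''.
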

\begin{proof}
For $x,y \in X$, we define $m_{x,y}$ as the minimal $n$
satisfying $x\lhd^n y =x$. Note that $ (\bullet \lhd^{m_{x,y}} y) $ lies in the stabilizer $\mathrm{Stab}(x)$.
Since $ |\mathrm{Stab}(x)| = |\mathrm{Inn}(X)| /|X|$ by connectivity,
any $m_{x,y}$ divides $|\mathrm{Inn}(X)| /|X| $; hence so does the type $t_X $.
\end{proof}
Furthermore, in some cases,
we can calculate some torsion parts of their group homology:
\begin{lem}\label{dfgk232}Let $X$ be a connected quandle of type $t_X $.
If $H_2^{\rm gr}(\mathrm{Inn}(X))$ is annihilated by $t_X < \infty $, then there is a $[1/t_X]$-isomorphism
$H^{\rm gr}_3(\mathrm{Adj}(X)) \cong H^{\rm gr}_3 (\mathrm{Inn}(X))$. 
\end{lem}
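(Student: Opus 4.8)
The plan is to factor the central extension \eqref{AI} through the infinite cyclic subgroup furnished by Lemma \ref{daiji}. First I would fix a point $x_0\in X$, set $z_0:=(e_{x_0})^{t_X}$ and $Z:=\langle z_0\rangle\subset\As(X)$. By Lemma \ref{daiji} the element $z_0$ is central; since $x\lhd^{t_X}x_0=x$ for all $x\in X$ by definition of the type, $z_0\in\Ker(\psi_X)$; and $\varepsilon(z_0)=t_X\neq0$ shows $z_0$ has infinite order, so $Z\cong\Z$. As $Z\subset\Ker(\psi_X)$, the epimorphism $\psi_X$ factors as $\As(X)\twoheadrightarrow\As(X)/Z\twoheadrightarrow\mathrm{Inn}(X)$, and both arrows are central extensions (quotients by images of the central subgroup $\Ker(\psi_X)$). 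I will then compute $\Z_{(\ell)}$-homology across each factor, for an arbitrary prime $\ell\nmid t_X$, and read off the assertion.

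\textbf{Step A: the extension $\As(X)/Z\to\mathrm{Inn}(X)$.} Write $A:=\Ker(\psi_X)$, so the relevant kernel is $A/Z$. I claim $Z\hookrightarrow A$ is a $[1/t_X]$-isomorphism, i.e.\ $(A/Z)\otimes\Z_{(\ell)}=0$. Indeed, Theorem \ref{cccor1} together with the hypothesis that $t_X$ annihilates $H_2^{\gr}(\mathrm{Inn}(X))$ gives $A\otimes\Z_{(\ell)}\cong\Z_{(\ell)}$ (the $H_2^{\gr}$-summand dies since $t_X$ becomes a unit); since $\varepsilon|_A$ sends $z_0$ to the unit $t_X\in\Z_{(\ell)}$, the localized map $A\otimes\Z_{(\ell)}\to\Z_{(\ell)}$ is an isomorphism, and then $Z\hookrightarrow A\xrightarrow{\varepsilon|_A}\Z$, being multiplication by the unit $t_X$ after localization, forces $Z\otimes\Z_{(\ell)}\to A\otimes\Z_{(\ell)}$ to be an isomorphism. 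A torsion abelian group with no $\ell$-primary part (such as $A/Z$ now is) has $\Z_{(\ell)}$-acyclic classifying space, so in the Lyndon--Hochschild--Serre spectral sequence of $1\to A/Z\to\As(X)/Z\to\mathrm{Inn}(X)\to1$ all rows $q\ge1$ die after inverting $t_X$; hence the projection induces $H_n^{\gr}(\As(X)/Z)\cong_{[1/t_X]}H_n^{\gr}(\mathrm{Inn}(X))$ for every $n$.

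\textbf{Step B: the extension $Z\hookrightarrow\As(X)\to\As(X)/Z$.} This central extension by $Z\cong\Z$ is classified by a class $\beta\in H^2_{\gr}(\As(X)/Z;\Z)$, the Euler class of the circle bundle $BZ\to B\As(X)\to B(\As(X)/Z)$. The homomorphism $\varepsilon\colon\As(X)\to\Z$ restricts on $Z$ to multiplication by $t_X$, so the five-term exact sequence (transgression) gives $t_X\cdot\beta=0$. In the associated Gysin sequence
\[
\cdots\to H_n^{\gr}(\As(X))\to H_n^{\gr}(\As(X)/Z)\xrightarrow{\ \cap\beta\ }H_{n-2}^{\gr}(\As(X)/Z)\to H_{n-1}^{\gr}(\As(X))\to\cdots
\]
every cap-product map is annihilated by $t_X$, hence vanishes after inverting $t_X$; localizing at $\ell$ thus yields short exact sequences $0\to H_{n-1}^{\gr}(\As(X)/Z)_{(\ell)}\to H_n^{\gr}(\As(X))_{(\ell)}\to H_n^{\gr}(\As(X)/Z)_{(\ell)}\to0$. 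Combining with Step A and setting $n=3$ gives $0\to H_2^{\gr}(\mathrm{Inn}(X))_{(\ell)}\to H_3^{\gr}(\As(X))_{(\ell)}\to H_3^{\gr}(\mathrm{Inn}(X))_{(\ell)}\to0$; the left-hand term vanishes because $t_X$ annihilates $H_2^{\gr}(\mathrm{Inn}(X))$, and tracing the edge homomorphisms identifies the resulting isomorphism with $(\psi_X)_*$. Since $\ell\nmid t_X$ was arbitrary, $(\psi_X)_*\colon H_3^{\gr}(\As(X))\cong_{[1/t_X]}H_3^{\gr}(\mathrm{Inn}(X))$, which is the claim.

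Conceptually this is routine; the main obstacle is the localization bookkeeping. The delicate points are: extracting $A\otimes\Z_{(\ell)}\cong\Z_{(\ell)}$ cleanly from Theorem \ref{cccor1} and the hypothesis without worrying about the possibly infinite structure of $A$; justifying the $\Z_{(\ell)}$-acyclicity of $B(A/Z)$; and matching up the edge homomorphisms of the two spectral sequences so that the final map is genuinely $(\psi_X)_*$ rather than merely an abstract isomorphism.
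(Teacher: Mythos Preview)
Your argument is correct. The paper's proof is a two-line appeal to the Lyndon--Hochschild--Serre spectral sequence of the extension \eqref{AI} directly, after noting that Theorem~\ref{cccor1} together with the hypothesis gives $\Ker(\psi_X)\cong_{[1/t_X]}\Z$. You instead factor $\psi_X$ through the intermediate quotient $\As(X)/Z$ with $Z=\langle e_{x_0}^{t_X}\rangle\cong\Z$, handling the $\Z$-extension by the Gysin sequence and the remaining extension (with $[1/t_X]$-trivial kernel) by a separate spectral-sequence collapse. The ingredients are the same---Theorem~\ref{cccor1}, the hypothesis on $H_2^{\gr}(\mathrm{Inn}(X))$, and the map $\varepsilon$ used to force the relevant extension class to be $t_X$-torsion---but your decomposition makes explicit exactly the step the paper's ``readily'' hides: in the paper's direct approach one must still check that the $d_2$-differential $H_3(\mathrm{Inn}(X))\to H_1(\mathrm{Inn}(X);\Ker(\psi_X))$ vanishes after inverting $t_X$, which amounts to the same $t_X$-torsion statement about the extension class that you prove for $\beta$ in Step~B. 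Your route also pins down the isomorphism as $(\psi_X)_*$, which the lemma does not require but is pleasant to have. The price you pay is the extra bookkeeping you flag at the end (that $A/Z$ is genuinely torsion with no $\ell$-part, and the edge-map identifications), all of which you handle correctly.
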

\begin{proof}
Consider the Lyndon-Hochschild spectral sequence of \eqref{AI}.
It is sufficient for the proof to show that the differential
$$d_2 : E^2_{3,0} = H_3({\rm Inn}(X);H_0({\rm Ker}(\psi_X)) ) \lra E^2_{1,1} = H_1^{\rm gr}({\rm Inn}(X);H_1^{\rm gr}({\rm Ker}( \psi_X ) ))$$
is trivial modulo $t_X$.
Indeed, the inflation-restriction sequence of \eqref{AI}
$$\mathrm{Ker}(\psi_X) \lra H_1^{\rm gr}({\rm As}(X);\Z) \lra H_1^{\rm gr}({\rm Inn}(X);\Z) \lra 0 $$
and Lemma \ref{daiji} imply that $H_1^{\rm gr}({\rm Inn}(X);\Z)$ is annihilated by $t_X$.
Furthermore, we obtain the $[1/t_X]$-isomorphism $\mathrm{Ker}(\psi_X) \cong_{[1/t_X]} \Z $ from Theorem \ref{cccor1}. Hence, the image of $d_2$ is trivial as required.
\end{proof}

\section{Six examples of $\mathrm{As}(X) $ and second quandle homology }\label{Sin2}
Based on the previous results on $\mathrm{As}(X)$, this section calculates $\mathrm{Inn}(X)$ and $\mathrm{As} (X)$ for six kinds of connected quandles $X$:
Alexander, symplectic, spherical, Dehn, Coxeter and core quandles.
These quandles are dealt with in six subsections in turn.

Furthermore, to determine the second quandle homology $ H_2^Q (X)$ in trivial $\Z$-coefficients (see \S \ref{kokoroni} for the definition),
we will employ the following computation of Eisermann: 
\begin{thm}[{\cite[Theorem 1.15]{Eis2}}]\label{ehyouj2i3}
Let $X $ be a connected quandle.
Fix an element $x_0 \in X$. Let $ \mathrm{Stab}(x_0) \subset \mathrm{As}(X) $ be the stabilizer of $x_0$, and $\varepsilon : \mathrm{As} (X)\rightarrow \Z$ be the abelianization mentioned in \eqref{epsilon}.
Then, 
$H_2^Q (X )$ is isomorphic to the abelianization of $\mathrm{Stab}(x_0) \cap \Ker (\varepsilon) $.
\end{thm}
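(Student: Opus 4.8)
The plan is to prove this through Eisermann's Galois correspondence for quandle coverings, realising $H_2^Q(X)$ as the abelianised deck group of the universal connected covering of $X$, which can be built explicitly from $\As(X)$. First I would pass to the homogeneous presentation of $X$: by \eqref{hasz2}, for $h\in\mathrm{Stab}(x_0)=:H$ one has $h^{-1}e_{x_0}h=e_{x_0\cdot h}=e_{x_0}$, so $e_{x_0}$ is central in $H$, and since $\varepsilon(e_{x_0})=1$ the element $e_{x_0}$ has infinite order with $\langle e_{x_0}\rangle\cap\Ker(\varepsilon)=\{1\}$; hence $H\cong\overline H\times\langle e_{x_0}\rangle$, where $\overline H:=H\cap\Ker(\varepsilon)$. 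Connectivity gives an $\As(X)$-equivariant bijection $X\cong H\backslash\As(X)$, $x_0\cdot g\mapsto Hg$, carrying $\lhd$ to the homogeneous operation $Ha\lhd Hg:=Hag^{-1}e_{x_0}g$ (well defined exactly because $e_{x_0}$ is central in $H$). Thus $X$ is recovered from the triple $(\As(X),H,e_{x_0})$, and $\overline H\cong H/\langle e_{x_0}\rangle$ serves as a model for the fundamental group $\pi_1(X,x_0)$.

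Next I would run the covering theory. For $K\le H$ with $e_{x_0}\in K$, the same homogeneous formula descends to a quandle structure on $K\backslash\As(X)$ and makes $K\backslash\As(X)\to H\backslash\As(X)=X$ a connected quandle covering, provided $K$ contains the $\As(X)$-normal closure of $\{k^{-1}e_{x_0}ke_{x_0}^{-1}:k\in K\}$; since this set is $\{1\}$ for $K=\langle e_{x_0}\rangle$, the covering $\widetilde X:=\langle e_{x_0}\rangle\backslash\As(X)\to X$ is the universal connected covering, with deck group $H/\langle e_{x_0}\rangle\cong\overline H$ acting by left translation — every verification here reducing to the centrality of $e_{x_0}$ in $H$ together with the axiom $x\lhd x=x$. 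By Eisermann's Galois correspondence, connected coverings of $X$ then correspond to subgroups of $\overline H$, and pushing $\widetilde X$ forward along homomorphisms $\overline H\to\Lambda$ produces every central extension of $X$ by an abelian group $\Lambda$; combining this with the quandle $2$-cocycle description of such extensions gives a natural isomorphism $H^2_Q(X;\Lambda)\cong\Hom(\overline H,\Lambda)=\Hom(\overline H_{\mathrm{ab}},\Lambda)$.

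Finally, since $X$ is connected, $H_1^Q(X)\cong\Z$ is free, so the universal coefficient theorem gives $H^2_Q(X;\Lambda)\cong\Hom(H_2^Q(X),\Lambda)$ naturally in $\Lambda$; comparing with the previous isomorphism and invoking Yoneda yields $H_2^Q(X)\cong\overline H_{\mathrm{ab}}=(\mathrm{Stab}(x_0)\cap\Ker(\varepsilon))_{\mathrm{ab}}$, and independence of $x_0$ follows because distinct basepoints of a connected quandle are interchanged by inner automorphisms. The step I expect to be the main obstacle is the Galois correspondence itself — specifically the surjectivity half, that every quandle $2$-cocycle extension of $X$ by $\Lambda$ is the pushout of $\widetilde X$ along a unique homomorphism $\overline H\to\Lambda$: proving this requires the full path-lifting and unique-lifting machinery for quandle coverings, and is the genuinely technical part. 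A smaller point that still needs care is verifying that the homogeneous formula really does descend to a well-defined quandle structure on each quotient $K\backslash\As(X)$.
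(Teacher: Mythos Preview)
The paper does not prove this statement: Theorem~\ref{ehyouj2i3} is quoted verbatim from Eisermann \cite[Theorem~1.15]{Eis2} and used as a black box throughout \S\ref{Sin2} and \S\ref{SScon32}. There is therefore no ``paper's own proof'' to compare against.

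Your sketch is essentially a recapitulation of Eisermann's own argument in \cite{Eis2}: the homogeneous presentation $X\cong H\backslash\As(X)$, the construction of the universal covering as $\langle e_{x_0}\rangle\backslash\As(X)$, the identification of the fundamental group with $\overline H=H\cap\Ker(\varepsilon)$, and the passage to $H_2^Q$ via the classification of abelian extensions and universal coefficients. The outline is correct, and you have honestly flagged the genuine work --- the surjectivity half of the Galois correspondence, which occupies a substantial portion of \cite{Eis2}. One small point: your Yoneda step requires the isomorphism $H^2_Q(X;\Lambda)\cong\Hom(\overline H_{\mathrm{ab}},\Lambda)$ to be \emph{natural} in $\Lambda$, which does hold but is worth stating explicitly since it is what makes the comparison with $\Hom(H_2^Q(X),\Lambda)$ decisive.
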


\subsection{Alexander quandles}\label{Ex1}
We start by discussing the class of Alexander quandles.
Every $\Z[T^{\pm 1}] $-module $X $ has a quandle structure with the operation
$ x\lhd y=y +T( x- y) $ for $x,y\in X$, and is called {\it the Alexander quandle}.
This operation $\bullet \lhd y$ is roughly a $T$-multiple centered at $y$.
The type is the minimal $N$ such that $T^N=\mathrm{id}_X$ since $ x\lhd^n y= y+ T^n (x -y) $.
Furthermore, it can be easily verified that an Alexander quandle $X$ is connected if and only if $(1-T)X=X$.

Let us review the concrete presentation of $\mathrm{As}(X)$, which is due to Clauwens \cite{Cla2}.
When $X$ is connected,
set up the homomorphism $\mu_X : X \otimes X \rightarrow X \otimes X$ defined by $ \mu_X( x \otimes y)= x\otimes y - T y \otimes x.$
Further, he defined a group operation on $\Z \times X \times \mathrm{Coker}( \mu_X)$ by setting
$$ (n, x,\alpha) \cdot (m, y,\beta)= (n+m, T^m x +y, \ \alpha + \beta + [T^m x \otimes y]),$$
and constructed a group isomorphism $\mathrm{As}(X) \rightarrow \Z \times X \times \mathrm{Coker}( \mu_X)$,
which takes $e_x$ to $(1,x,0)$.
As a result, we see that the kernel of $\psi_X : \mathrm{As} (X) \rightarrow \mathrm{Inn}(X)$ equals $t_X \Z \times \Coker (\mu_X)$.

Thanks to his presentation of $\mathrm{As}(X)$, we can easily show a result of Clauwens that determines the homology $H_2^Q(X)$ of a connected Alexander quandle $X$.
To be precise,
\begin{prop}[{Clauwens \cite{Cla2}}]\label{H2dfg31sha}
Let $X$ be a connected Alexander quandle.
The homology $H_2^Q(X) $ is isomorphic to the quotient module $\Coker (\mu_X)= X\otimes_{\Z} X /(x \otimes y -Ty \otimes x)_{x,y \in X}$.
\end{prop}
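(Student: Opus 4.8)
The plan is to feed the presentation of $\As(X)$ recalled just above into Eisermann's description of the second quandle homology (Theorem~\ref{ehyouj2i3}). Since $X$ is connected, that theorem identifies $H_2^Q(X)$ with the abelianization of $\mathrm{Stab}(x_0)\cap\Ker(\varepsilon)$ for \emph{any} choice of $x_0\in X$, so I would fix $x_0=0$ and work throughout in Clauwens' coordinates $\As(X)\xrightarrow{\ \sim\ }\Z\times X\times\Coker(\mu_X)$, $e_x\mapsto(1,x,0)$.

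First, since $\varepsilon$ is the abelianization and $\varepsilon(e_x)=1$, it becomes the first projection $(n,x,\alpha)\mapsto n$; hence $\Ker(\varepsilon)=\{0\}\times X\times\Coker(\mu_X)$, on which the group law restricts to $(0,x,\alpha)\cdot(0,y,\beta)=(0,\,x+y,\,\alpha+\beta+[x\otimes y])$. Next I would determine the right action of $\As(X)$ on $X$ in these coordinates: from $w\cdot e_x=w\lhd x=Tw+(1-T)x$, together with the fact that the action respects Clauwens' group law, one gets $w\cdot(n,x,\alpha)=T^{n}w+(1-T)x$, so the $\Coker(\mu_X)$-factor acts trivially. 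Consequently $\mathrm{Stab}(0)=\{(n,x,\alpha):(1-T)x=0\}$, and therefore
\[
\mathrm{Stab}(0)\cap\Ker(\varepsilon)=\{\,(0,x,\alpha)\ :\ x\in\Ker(1-T),\ \alpha\in\Coker(\mu_X)\,\}.
\]

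To finish, recall that connectedness of $X$ is equivalent to $(1-T)X=X$; this surjective endomorphism is moreover injective — automatic when $X$ is finite, which is the setting of \cite{Cla2} — so $\Ker(1-T)=0$. Hence $\mathrm{Stab}(0)\cap\Ker(\varepsilon)=\{(0,0,\alpha):\alpha\in\Coker(\mu_X)\}$, on which the group law above is simply addition in $\Coker(\mu_X)$. This group is already abelian, so it equals its own abelianization, and Theorem~\ref{ehyouj2i3} yields $H_2^Q(X)\cong\Coker(\mu_X)$, as claimed.

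The only genuinely delicate point is the middle step: getting the $\As(X)$-action right in Clauwens' coordinates and using it to check that $\mathrm{Stab}(0)\cap\Ker(\varepsilon)$ is \emph{precisely} the central $\Coker(\mu_X)$-summand with its additive structure — in particular that no triple $(0,x,\alpha)$ with $x\neq0$ survives, which is exactly where the injectivity of $1-T$ enters. The rest is routine bookkeeping with Clauwens' presentation and Eisermann's theorem.
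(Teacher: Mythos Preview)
Your proof is correct and follows exactly the paper's approach: apply Eisermann's Theorem~\ref{ehyouj2i3} at $x_0=0$ and read off $\mathrm{Stab}(0)\cap\Ker(\varepsilon)$ from Clauwens' presentation of $\As(X)$. The paper compresses this to a single sentence (``by definition we can see that $\Ker(\varepsilon_X)\cap\mathrm{Stab}(0)$ is the cokernel $\Coker(\mu_X)$''), while you spell out the action formula $w\cdot(n,x,\alpha)=T^n w+(1-T)x$ and isolate the one nontrivial point---that $\Ker(1-T)=0$ is needed, and that this follows from finiteness via $(1-T)X=X$---which the paper leaves implicit.
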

\begin{proof}By definition we can see that the
$\Ker (\varepsilon) \cap \mathrm{Stab }(0)$ is the cokernel $\mathrm{Coker}( \mu_X)$.
\end{proof}

\subsection{Symplectic quandles}\label{Ex2}
Let $K$ be a commutative field, and let $\Sigma_g$ be the closed surface of genus $g$.
Consider the multiplicative group $ K^{\times }$, and the quotient $ K^{\times }/(K^{\times })^2$ modulo 2.
For $[r] \in K^{\times }/(K^{\times })^2$, we fix a representative $r \in K^{\times }$,
and consider the copy of $H^1(\Sigma_g; K ) \setminus \{ 0 \}= K^{2g} \setminus \{ 0 \}$,
denoted by $X_r $.
Let $X$ be the union $\cup_{r \in K^{\times } /(K^{\times })^2} X_r$ (here, we should notice that $X=X_r$ if $K$ is an algebraically closed field.). 
Using the standard symplectic 2-form $\langle, \rangle: H^1(\Sigma_g; K) \times H^1(\Sigma_g; K) \rightarrow K$,
the set $X $ is made into a quandle by the operation $ x \lhd y :=r  \langle x,y \rangle y +x \in X$ for $x \in X$ and $y \in X_r$, and is called {\it a symplectic quandle (over $K $)}.
The operation $ \bullet \lhd y: X\rightarrow X$ is commonly called
{\it the transvection} of $y$.
Note that the type of the quandle $X$ is the characteristic of $K$ since $ x\lhd^N y= N r  \langle x,y \rangle y +x $.

We will determine $ \mathrm{Inn}(X )$ and $ \mathrm{As} (X )$ associated with the symplectic quandle $X$ over $K$.
\begin{lem}\label{lemspsoinn}
$ \mathrm{Inn}(X )$ is isomorphic to the symplectic group $Sp(2g;K)$.
\end{lem}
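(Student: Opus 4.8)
The plan is to apply Theorem \ref{keykantan1g} with $G = Sp(2g;K)$ acting on $X = K^{2g}\setminus\{0\}$ in the obvious linear way, and with $\kappa : X \to Sp(2g;K)$ the transvection map sending $y$ to the symplectic transvection $T_y : v \mapsto v + \langle v,y\rangle y$. First I would check that each $T_y$ is indeed symplectic, i.e. $\langle T_y(u), T_y(v)\rangle = \langle u,v\rangle$; this is the standard one-line computation using $\langle y,y\rangle = 0$ and bilinearity. Condition (I) of Theorem \ref{keykantan1g} is then immediate: $x \lhd y = \langle x,y\rangle y + x = x\cdot\kappa(y)$ by the very definition of the symplectic quandle. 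So the work is entirely in verifying condition (II): that the transvections generate $Sp(2g;K)$, and that the linear action of $Sp(2g;K)$ on $K^{2g}\setminus\{0\}$ is effective.

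Effectivity is easy: the action of $GL(2g;K)$ on $K^{2g}$ is faithful, and no nonzero vector lies in the kernel of a nontrivial linear map, so the restricted action on $K^{2g}\setminus\{0\}$ is still faithful — here one only needs $2g \geq 1$, so that $X$ spans $K^{2g}$. The substantive point is that symplectic transvections generate the full symplectic group. This is a classical fact (it holds over any field, and indeed over any local ring; see e.g. standard references on classical groups), proved by a Witt-type induction: given $g \in Sp(2g;K)$, one uses transvections to move a chosen vector $e_1$ to $g(e_1)$, then works inside the symplectic complement, which is again a symplectic space of dimension $2g-2$, and induces on $g$. I would either cite this or sketch the induction. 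Once (I) and (II) are in hand, Theorem \ref{keykantan1g} yields $\mathrm{Inn}(X) \cong Sp(2g;K)$ directly, and identifies the actions.

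The main obstacle — more a point requiring care than a genuine difficulty — is the generation statement over an arbitrary commutative field $K$, especially the small cases: $\mathbb{F}_2$ and $\mathbb{F}_3$, and genus $g=1$ (where $Sp(2;K) = SL(2;K)$ and one must make sure transvections still generate, which they do). I would handle the generic argument by the Witt induction above and simply remark that the low-order exceptions are covered by the same argument or by direct inspection, citing the classical literature rather than reproducing it. A secondary subtlety is that when $\mathrm{Char}(K) \neq 0$ the group $Sp(2g;K)$ may fail to be perfect only in the tiniest cases, but this is not needed for the present lemma — it matters only when one later feeds $\mathrm{Inn}(X)$ into Theorem \ref{cccor1} or Proposition \ref{vanithm2} — so I would defer any such discussion.
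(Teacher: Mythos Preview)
Your proposal is correct and follows essentially the same approach as the paper: both apply Theorem \ref{keykantan1g} with $G = Sp(2g;K)$, take $\kappa$ to be the transvection map, note that condition (I) is immediate from the quandle definition, and verify condition (II) by citing the classical Cartan--Dieudonn\'e-type theorem that transvections generate $Sp(2g;K)$ together with faithfulness of the linear action. The paper's proof is more terse, but the structure is identical; your additional remarks on small fields and the $g=1$ case are careful but not strictly needed for the lemma as stated.
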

\begin{proof}
Recall from the Cartan-Dieudon\'{e} theorem, that the classical group $Sp(2g;K)$ is generated by transvections $(\bullet \lhd y)$.

We will show the desired isomorphism.
For any $y \in X$, the map $ ( \bullet \lhd y): X \rightarrow X $ is a restriction of a linear map $K^{2g} \rightarrow K^{2g}$.
It thus yields a map $\kappa : X \rightarrow GL(2g;K ) $, which factors through $ Sp(2g; K )$ and satisfies the conditions in Theorem \ref{keykantan1g}.
Indeed, the condition (II) follows from the classical theorem and
the effectivity of the standard action $K^{2g} \curvearrowleft Sp(2g; K ) $.
Therefore $ \mathrm{Inn}( X ) \cong Sp(2g; K ) $ as desired.
\end{proof}

\begin{prop}\label{vanithm2444}
Take a field $K$ of positive characteristic $p$ and with $|K| > 10$.
Assume the connectivity, that is, every $ x \in K$ admits a square $\sqrt{x}$ in $K.$
Let $X = K^2 \setminus \{ 0,0 \}$ be the symplectic quandle over $K$, and $\widetilde{Sp}(2g;K)$ be the
universal central extension of $Sp(2g;K)$.
Then $\mathrm{As} (X) \cong \Z \times \widetilde{Sp}(2g;K)$.
\end{prop}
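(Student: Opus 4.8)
The plan is to combine Lemma \ref{lemspsoinn}, which identifies $\mathrm{Inn}(X)$ with $Sp(2g;K)$, with Proposition \ref{vanithm2} applied to the connected quandle $X$. First I would check that $X$ is connected: every nonzero vector of $K^{2g}$ can be moved to any other by a product of transvections (this is classical, and also follows from the transitivity of $Sp(2g;K)$ on $X=K^{2g}\setminus\{0\}$ together with $\mathrm{Inn}(X)\cong Sp(2g;K)$). So $O(X)$ is a single orbit and the splitting in \eqref{asin} will read $\As(X)\cong \Ker(\varepsilon)\times\Z$, with $\Ker(\varepsilon)$ a perfect central extension of $Sp(2g;K)$.

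Next I would invoke the hypothesis $|K|>10$ and positive characteristic to ensure that $Sp(2g;K)$ is perfect and that its universal central extension $\widetilde{Sp}(2g;K)$ exists; indeed for $|K|\ge 4$ the group $Sp(2g;K)$ is perfect, so $H_1^{\gr}(Sp(2g;K))=0$ and Proposition \ref{vanithm2} applies literally, giving $\As(X)\cong \Ker(\varepsilon)\times\Z$ with $\Ker(\varepsilon)$ perfect. It then remains to identify the perfect central extension $\Ker(\varepsilon)$ of $Sp(2g;K)$ with the \emph{universal} one. For this I would use Theorem \ref{cccor1}: the quandle $X$ is of type $t_X=p=\mathrm{Char}(K)$, so $H_2^{\gr}(Sp(2g;K))=H_2^{\gr}(\mathrm{Inn}(X))$ is annihilated by $p$, and $\Ker(\psi_X)$ is $[1/p]$-isomorphic to $\Z\oplus H_2^{\gr}(Sp(2g;K))$. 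Combined with the presentation-theoretic fact that $\Ker(\varepsilon)$ is a perfect central extension of $Sp(2g;K)$ whose kernel surjects onto $H_2^{\gr}(Sp(2g;K))$ (since a perfect central extension is dominated by the universal one, and the kernel is a quotient of the Schur multiplier), a count of orders away from $p$ forces $\Ker(\varepsilon)\cong \widetilde{Sp}(2g;K)$ after we also settle the $p$-part.

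The main obstacle is precisely the $p$-primary part: Theorem \ref{cccor1} only controls $\Ker(\psi_X)$ up to $p$-torsion, so to pin down $\Ker(\varepsilon)\cong\widetilde{Sp}(2g;K)$ on the nose I need an input that handles the characteristic. Here the condition $|K|>10$ is the key: under it, the Schur multiplier of $Sp(2g;K)$ is known to be trivial (this is the stable computation of $H_2$ of symplectic groups over fields with enough elements, going back to Steinberg/Matsumoto-type results, with the small-field exceptions for $|K|\le 10$ in low rank excluded by hypothesis). Thus $H_2^{\gr}(Sp(2g;K))=0$, the universal central extension $\widetilde{Sp}(2g;K)$ equals $Sp(2g;K)$ itself, the $[1/p]$-isomorphism $\Ker(\psi_X)\cong_{[1/p]}\Z$ upgrades to an honest isomorphism $\Ker(\psi_X)\cong\Z$ (a perfect group has no nontrivial abelian quotient, and here the kernel is central and its only possible extra part is a $p$-group quotient of a trivial Schur multiplier, hence trivial), and the conclusion $\As(X)\cong\Z\times \widetilde{Sp}(2g;K)=\Z\times Sp(2g;K)$ follows from the last sentence of Proposition \ref{vanithm2} since $H_2^{\gr}(\mathrm{Inn}(X))$ vanishes. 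So the real content to verify carefully is the triviality of $H_2^{\gr}(Sp(2g;K))$ for $|K|>10$, and that is where I expect to spend the effort, citing the relevant $K$-theoretic computation.
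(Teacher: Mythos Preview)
Your overall strategy (connectedness, Lemma~\ref{lemspsoinn}, Proposition~\ref{vanithm2}) matches the paper, but the identification of $\Ker(\varepsilon)$ with $\widetilde{Sp}(2g;K)$ goes wrong in two places.

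First, you misapply Theorem~\ref{cccor1}: it asserts that $H_2^{\rm gr}(\As(X))$ is annihilated by $t_X=p$, \emph{not} that $H_2^{\rm gr}(\mathrm{Inn}(X))=H_2^{\rm gr}(Sp(2g;K))$ is. Second, and more seriously, your claimed input ``$H_2^{\rm gr}(Sp(2g;K))=0$ for $|K|>10$'' is false in the generality of the proposition. The statement allows $K$ to be an \emph{infinite} field of positive characteristic, and for such $K$ the Schur multiplier of $Sp(2g;K)$ is typically nontrivial (it is tied to $K_2$ of $K$). Consequently your conclusion $\widetilde{Sp}(2g;K)=Sp(2g;K)$ and $\As(X)\cong\Z\times Sp(2g;K)$ is wrong for infinite $K$; that stronger conclusion is exactly what the paper proves separately for \emph{finite} fields in Proposition~\ref{dfg31speak}.

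The paper's argument uses the two pieces in the opposite way. From $\As(X)\cong\Z\times\Ker(\varepsilon)$ and K\"unneth one gets $H_2^{\rm gr}(\Ker(\varepsilon))\cong H_2^{\rm gr}(\As(X))$, which Theorem~\ref{cccor1} says is annihilated by $p$. Since $\Ker(\varepsilon)$ is a perfect central extension of $Sp(2g;K)$, the universal one $\widetilde{Sp}(2g;K)$ surjects onto it with kernel $H_2^{\rm gr}(\Ker(\varepsilon))$, and this kernel sits inside $H_2^{\rm gr}(Sp(2g;K))$. The external input is then Suslin's result \cite{Sus} that $H_2^{\rm gr}(Sp(2g;K))$ has \emph{no $p$-torsion}; combined with the $p$-annihilation this forces $H_2^{\rm gr}(\Ker(\varepsilon))=0$, i.e.\ $\Ker(\varepsilon)\cong\widetilde{Sp}(2g;K)$. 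So the correct $K$-theoretic fact to cite is ``no $p$-torsion in $H_2$'', not ``$H_2=0$''.
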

\begin{proof}
Since $X$ is connected and $\mathrm{Inn}(X) \cong Sp(2g;K) $ by Lemma \ref{lemspsoinn}, Proposition \ref{vanithm2} implies $\mathrm{As} (X) \cong \Ker (\varepsilon )\times \Z$.
Further, it follows from Theorem \ref{cccor1} that $H_2^{\rm gr}(\mathrm{As} (X))$ is annihilated by $p$.
Hence, following the fact \cite{Sus} that $ H_2^{\rm gr}(Sp(2g;K) )$ has no $p$-torsion,
the kernel $\Ker (\varepsilon ) $ must be the
universal central extension of $Sp(2g;K)$, which completes the proof.
\end{proof}
\begin{rem} This proposition holds even if the characteristic of $K$ is zero and $X$ is not connected; see \cite{Nos2} for the proof.
Furthermore, the paper \cite{Nos2} also determines $ H_2^Q(X)$ in the case where $K$ is of infinite order,
\end{rem}

Accordingly, hereafter, we will focus on finite fields $K= \F_q$ with $q >10$:
\begin{prop}\label{dfg31speak} Let $X$ be
the symplectic quandle over $\F_q$. If $q>10$, then $\mathrm{As}(X) \cong \Z^{O(X)} \times Sp(2g;\mathbb{F}_q)$.
Furthermore, $H_3^{\mathrm{gr}}(\mathrm{As}(X)) \cong \Z/(q^2-1)$.
\end{prop}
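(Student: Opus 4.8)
The plan for the first isomorphism is to read it off Proposition~\ref{vanithm2444} and then remove the tilde. Proposition~\ref{vanithm2444} applies (the field $\F_q$ has positive characteristic and $|\F_q|=q>10$) and yields $\As(X)\cong \Z\times \widetilde{Sp}(2g;\F_q)$, where $\widetilde{Sp}(2g;\F_q)$ is the universal central extension of $Sp(2g;\F_q)$. Since $Sp(2g;\F_q)$ is perfect (Lemma~\ref{lemspsoinn}: it is generated by transvections), the kernel of this universal central extension is $H_2^{\gr}(Sp(2g;\F_q))$; and the Schur multiplier of a finite symplectic group $Sp(2g;\F_q)$ is trivial for every $q$ outside the short classical list of exceptions, all of which live over $\F_2,\F_3,\F_4$ or $\F_9$ and hence are excluded by $q>10$. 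Therefore $\widetilde{Sp}(2g;\F_q)\cong Sp(2g;\F_q)$, giving $\As(X)\cong \Z\times Sp(2g;\F_q)$.

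For the homology I would run the K\"unneth theorem on this product. As $H_\ast^{\gr}(\Z)$ is free and supported in degrees $0,1$, all Tor-terms vanish and
$$ H_3^{\gr}(\As(X))\ \cong\ H_3^{\gr}(Sp(2g;\F_q))\ \oplus\ H_2^{\gr}(Sp(2g;\F_q)), $$
in which the second summand vanishes again by the triviality of the Schur multiplier for $q>10$. So the statement is reduced to identifying $H_3^{\gr}(Sp(2g;\F_q))$ with $\Z/(q^2-1)$.

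To establish that, I would treat the prime-to-$p$ part and the $p$-part separately, $p=\mathrm{char}(\F_q)$. The prime-to-$p$ part is already handled by Lemma~\ref{dfgk232} --- $H_2^{\gr}(Sp(2g;\F_q))=0$ is annihilated by $t_X=p$, so there is a $[1/p]$-isomorphism $H_3^{\gr}(\As(X))\cong H_3^{\gr}(Sp(2g;\F_q))$ --- and its value $\Z/(q^2-1)$ away from $p$ matches Quillen's computation $K_3(\F_q)\cong\Z/(q^2-1)$ after passing from $SL$ to $Sp$ through the comparison with the hermitian $K$-theory of $\F_q$ in the stable range. The $p$-part vanishes because the $K$-groups and hermitian $K$-groups of $\F_q$ carry no $p$-torsion, so that the stable integral homology of $Sp(\F_q)$ is $p$-torsion-free in positive degrees. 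Combining the two pieces gives $H_3^{\gr}(Sp(2g;\F_q))\cong\Z/(q^2-1)$, and hence $H_3^{\gr}(\As(X))\cong\Z/(q^2-1)$.

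The one genuinely non-formal step is this last identification of $H_3$ of the finite symplectic group on the nose (not merely up to $p$-torsion): Theorem~\ref{cccor1} and Lemma~\ref{dfgk232} only control the prime-to-$p$ information, so isolating the precise group $\Z/(q^2-1)$ requires importing the stable integral homology of $Sp(\F_q)$ --- from the hermitian $K$-theory of finite fields, or from a Fiedorowicz--Priddy style computation of the homology of classical groups over $\F_q$. The point one must be careful about is the small-genus range, where $Sp(2g;\F_q)$ lies below the homological-stability threshold for $H_3$ and the raw group homology can be strictly larger; the clean computation lives in, or is to be reduced to, the stable range.
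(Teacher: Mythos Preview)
Your argument for the first isomorphism is exactly the paper's: invoke Proposition~\ref{vanithm2444}, then collapse the tilde because the Schur multiplier of $Sp(2g;\F_q)$ vanishes for $q>10$ (the paper cites \cite{FP,Fri} for this rather than the classification of exceptional multipliers, but the content is the same).

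For $H_3$, the paper is much more direct than you are: having obtained $\As(X)\cong\Z\times Sp(2g;\F_q)$, it simply quotes the computation $H_3^{\gr}(Sp(2g;\F_q))\cong\Z/(q^2-1)$ from \cite{FP,Fri} as a black box. Your detour through Quillen's $K_3(\F_q)$, hermitian $K$-theory, and homological stability is not wrong in spirit, but it is both redundant and incomplete. Redundant, because once K\"unneth gives you $H_3^{\gr}(\As(X))\cong H_3^{\gr}(Sp(2g;\F_q))$ on the nose, there is no need to re-invoke Lemma~\ref{dfgk232} for a mere $[1/p]$-isomorphism between the same two groups. Incomplete, because --- as you yourself note --- the stability route only pins down the stable value, and you would still owe an argument that $H_3^{\gr}(Sp(2g;\F_q))$ has already stabilised for every $g\geq 1$. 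That is precisely the content packaged in the Fiedorowicz--Priddy reference the paper cites, so you may as well cite it and skip the $K$-theoretic reconstruction.
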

\begin{proof}
Since $(\F_q)^{\times} $ is cyclic, 
we first should notice that, if $q$ is even $|O(X)| =1$, 
and that, if $q$ is odd, $|O(X)| =2$ or $1$ according to $q=4r+1$ or $q=4r+3$ for some $r \in \Z$.

Since $q>10$, the first and second homology groups of $\mathrm{Inn}( X) \cong Sp(2g;\mathbb{F}_q)$ are known to be zero (see \cite{FP,Fri}).
Thus, $\widetilde{Sp}(2g;\F_q) = Sp(2g;\F_q) $, leading to $\mathrm{As}(X) \cong \Z^{O(X)} \times Sp(2g;\mathbb{F}_q)$ as stated.
Furthermore, the latter part follows from the result $H_3^{\mathrm{gr}}(Sp(2g;\mathbb{F}_q)) \cong \Z/(q^2-1)$ in \cite{FP,Fri}.
\end{proof}
As a result, we will determine the second homology $H_2^Q(X) $.
\begin{prop}\label{dfg31spk} Let $q >10$, and $X$ be as above.
If $g \geq 2$, the homology $H_2^Q(X) $ vanishes.
If $g=1$, then $H_2^Q(X) \cong (\Z/ p )^{d|O(X)|} $, where $q =p^d$.
\end{prop}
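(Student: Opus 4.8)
The plan is to combine Eisermann's Theorem \ref{ehyouj2i3} with the explicit description $\As(X)\cong\Z\times Sp(2g;\F_q)$ from Propositions \ref{vanithm2} and \ref{dfg31speak}, reducing $H_2^Q(X)$ to the abelianization of a vector stabilizer in the finite symplectic group. Since $\mathrm{Inn}(X)\cong Sp(2g;\F_q)$ is perfect, the abelianization map $\varepsilon\colon\As(X)\to\Z$ has kernel $[\As(X),\As(X)]$, which under the above isomorphism is exactly the factor $Sp(2g;\F_q)$, and $\psi_X$ restricts to an isomorphism of this factor onto $\mathrm{Inn}(X)$ intertwining the two actions on $X$ (the remaining $\Z$-factor is a section of $\varepsilon$ contained in $\Ker(\psi_X)$, so it acts trivially). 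Fixing a nonzero vector $x_0\in X=\F_q^{2g}\setminus\{0\}$, this identifies $\mathrm{Stab}_{\As(X)}(x_0)\cap\Ker(\varepsilon)$ with $S:=\mathrm{Stab}_{Sp(2g;\F_q)}(x_0)$, the stabilizer of the vector $x_0$ under the natural linear action on $\F_q^{2g}$. Hence Theorem \ref{ehyouj2i3} gives $H_2^Q(X)\cong S^{\mathrm{ab}}$, and everything is reduced to computing this group.

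Next I would describe $S$ concretely. Completing $x_0$ to a symplectic basis, any element of $S$ preserves the flag $\langle x_0\rangle\subset x_0^{\perp}\subset\F_q^{2g}$, hence acts on $W:=x_0^{\perp}/\langle x_0\rangle\cong\F_q^{2g-2}$; this yields a split surjection $S\twoheadrightarrow Sp(W)\cong Sp(2g-2;\F_q)$ whose kernel $U$ is a $p$-group of order $q^{2g-1}$. A direct matrix computation shows $U$ is a central extension $0\to\F_q\to U\to W\to 0$ with cocycle the symplectic $2$-form, on which $Sp(W)$ acts trivially on the central $\F_q=Z(U)$ and naturally on the quotient $W$; consequently $[U,U]=Z(U)$ and $U^{\mathrm{ab}}\cong W$ when $p$ is odd, while $U$ is elementary abelian when $p=2$. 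Writing $S\cong U\rtimes Sp(W)$ one obtains $S^{\mathrm{ab}}\cong\big(U^{\mathrm{ab}}\big)_{Sp(W)}\oplus Sp(W)^{\mathrm{ab}}$, where $(\,\cdot\,)_{Sp(W)}$ denotes $Sp(W)$-coinvariants.

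For $g=1$ the group $Sp(W)$ is trivial and $S=U\cong\F_q$, so $H_2^Q(X)\cong\F_q\cong(\Z/p)^d$. For $g\ge 2$ we have $2g-2\ge 2$ and $q>10$, so $Sp(W)=Sp(2g-2;\F_q)$ is perfect and the second summand dies; it then remains to show $\big(U^{\mathrm{ab}}\big)_{Sp(W)}=0$. The key input is that $Sp(W)$ is transitive on $W\setminus\{0\}$ and $q>2$, which forces $W_{Sp(W)}=0$. When $p$ is odd this already finishes the proof, since $U^{\mathrm{ab}}\cong W$. When $p=2$ one must also kill the central $\F_q$: using transitivity, for each nonzero $z\in W$ and each $\mu\in\F_q$ one realizes the element $(z,\mu)$ of $U$ in the form $u-Au$ (with $u\in U$, $A\in Sp(W)$), whence $(z,\mu)-(z,0)=(0,\mu)$ shows $Z(U)\subset\langle\,u-Au:u\in U,\ A\in Sp(W)\,\rangle$; combined with $W_{Sp(W)}=0$ this gives $U_{Sp(W)}=0$. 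Either way $H_2^Q(X)=S^{\mathrm{ab}}=0$.

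The reduction via Eisermann's theorem and the split exact sequence for $S$ are routine. The real work, and the main obstacle, is the vanishing $\big(U^{\mathrm{ab}}\big)_{Sp(2g-2;\F_q)}=0$ for $g\ge 2$: one has to correctly identify the $Sp(W)$-module structure of $U$, which genuinely differs in odd and even characteristic, and in characteristic $2$ cope with the fact that $0\to\F_q\to U\to W\to 0$ is not split as a module extension, so the easy argument only kills $W$ and a short extra computation is needed for $Z(U)$. This is also the only place where the hypothesis $q>10$ is used, through perfectness of $Sp(2g-2;\F_q)$ and transitivity on nonzero vectors.
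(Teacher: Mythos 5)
Your reduction is exactly the paper's: via Theorem \ref{ehyouj2i3} and the splitting $\As(X)\cong\Z\times Sp(2g;\F_q)$ (with the $\Z$-factor central and acting trivially), both arguments identify $H_2^Q(X)$ with the abelianization of the vector stabilizer $G_X\subset Sp(2g;\F_q)$, and the $g=1$ case is the same explicit unipotent stabilizer $\cong(\Z/p)^d$. Where you genuinely diverge is the key vanishing for $g\ge 2$. The paper (Lemma \ref{d;apk}) argues homologically from the literature: it computes $|G_X|=q^{2g-1}|Sp(2g-2;\F_q)|$ from the Fiedorowicz--Priddy order formula, kills the prime-to-$p$ part of $H_1\oplus H_2(G_X)$ using the subgroup $Sp(2g-2;\F_q)$ of $p$-power index together with the known vanishing of $H_1\oplus H_2(Sp(2g-2;\F_q))$ away from $p$, and kills the $p$-part by invoking Friedlander's subgroup $\Delta(Sp(2g;\F_q))\subset G_X$ containing a $p$-Sylow with vanishing $\Z/p$-homology; this yields $H_2(G_X)=0$ as well, which your argument does not (and need not, for this proposition). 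You instead compute $G_X^{\mathrm{ab}}$ directly from the Levi decomposition $G_X\cong U\rtimes Sp(2g-2;\F_q)$, using perfectness of the Levi factor, $W_{Sp(W)}=0$ by transitivity, and in characteristic $2$ the extra verification that the central $\F_q$ lies in the augmentation submodule (your claim that $(z,\mu)=u-Au$ is solvable does check out: choose $w$ with $\langle w,z\rangle=\mu$ and $w\notin\{0,z\}$, set $Aw=z+w$ via transitivity, which uses $q>2$ and $\dim W\ge 2$). Your route is more elementary and self-contained, isolating exactly the correct module-theoretic subtlety at $p=2$ where the extension $0\to\F_q\to U\to W\to 0$ is non-split; the paper's route is shorter on the page, proves the stronger statement $H_1(G_X)=H_2(G_X)=0$, but leans on Friedlander's and Fiedorowicz--Priddy's computations rather than on a hands-on analysis of the stabilizer.
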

\begin{proof}
Recall $\mathrm{As}(X) \cong \Z^{O(X)} \times Sp(2g;\mathbb{F}_q)$.
Considering the standard action $X \curvearrowleft Sp(2g; \F_q) $, denote by $G_X $ the
stabilizer of $(1,0, \dots, 0) \in ( \mathbb{F}_q)^{2g}$.
Since Theorem \ref{ehyouj2i3} immediately means $ H_2^Q(X) \cong H_1^{\rm gr}(G_X )$,
we will calculate $H_1^{\rm gr}(G_X)$ as follows.
First, for $g =1$, it can be verified that the stabilizer $G_X $ is exactly the product $(\Z/ p )^d $ as an abelian group;
hence $ H_2^Q(X) \cong ( \Z/ p )^{d|O(X)| } $ in the sequel.
Next, for $g \geq 2$, the vanishing $ H_2^Q(X) = H_1^{\rm gr}(G_X) =0$ immediately follows from Lemma \ref{d;apk} below.
\end{proof}

\begin{lem}\label{d;apk} Let $g \geq 2$ and $q >10$.
Let $G_X$ denote the stabilizer of the action $X \curvearrowleft Sp(2g;\F_q)$ mentioned above.
Then the homology groups $ H_1^{\rm gr}(G_X) $ and $ H_2^{\rm gr}(G_X )$ vanish.
\end{lem}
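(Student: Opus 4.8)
Identifying $X$ with $\F_q^{2g}\setminus\{0\}$ and fixing a symplectic basis $e_1,f_1,\dots,e_g,f_g$, I would regard $G_X$ as the stabilizer of $e_1$ inside $Sp(2g;\F_q)$. The first step is elementary linear algebra: determining which matrices fix $e_1$, one identifies $G_X\cong U\rtimes Sp(2g-2;\F_q)$, where $Sp(2g-2;\F_q)$ is the symplectic group of $W:=\langle e_2,f_2,\dots,e_g,f_g\rangle$ (acting naturally on $W$), and $U$ is the unipotent radical, of order $q^{2g-1}$. The subgroup $Z\subseteq U$ of transvections along $e_1$ is isomorphic to $(\F_q,+)$, is central in $G_X$, and is fixed by the Levi; and $U/Z\cong W$ as $Sp(2g-2;\F_q)$-modules, with commutator pairing $U/Z\times U/Z\to Z$ equal to the symplectic form. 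Thus $U$ is the Heisenberg group of $W$ (so $[U,U]=Z$) when $p$ is odd, while for $p=2$ the group $U$ is elementary abelian of exponent $2$ and fits in a non-split extension $0\to Z\to U\to W\to0$ of $\F_2[Sp(2g-2;\F_q)]$-modules.

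Writing $Q:=Sp(2g-2;\F_q)$, the next step is the Lyndon--Hochschild--Serre spectral sequence $E^2_{s,t}=H^{\rm gr}_s(Q;H^{\rm gr}_t(U))\Rightarrow H^{\rm gr}_{s+t}(G_X)$, for which it suffices that all $E^\infty_{s,t}$ with $s+t\in\{1,2\}$ vanish. The needed inputs are: $H^{\rm gr}_1(Q)=H^{\rm gr}_2(Q)=0$ (by \cite{FP,Fri} for $g\ge3$, and for $g=2$ because the Schur multiplier of $SL(2;\F_q)$ vanishes, $q>10$ excluding $q=4,9$); the $Q$-module $H_1U=U^{\mathrm{ab}}$ (namely $W$ for $p$ odd, and the above non-split extension for $p=2$) together with $H^{\rm gr}_0(Q;H_1U)=H^{\rm gr}_1(Q;H_1U)=0$; and $H^{\rm gr}_0(Q;H_2U)=0$. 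The vanishing $H^{\rm gr}_0(Q;H_1U)=0$ is the assertion that $G_X$ is perfect, immediate in odd characteristic from $[U,U]=Z$ and $[Q,W]=W$ (since $Q$ is transitive on $W\setminus\{0\}$), and in characteristic $2$ from a short commutator computation showing $Z\subseteq[U,Q]$. For $H_1U$ in higher degree, and for $H_2U$, one reads off the $Q$-module structure from the central extension $1\to Z\to U\to W\to1$: the constituents are Frobenius twists of $W$, of $S^2W$, and of $\ker(\Lambda^2W\to Z)$ (the transgression being the nondegenerate symplectic form, which uses up the ``co-form'' $\sum e_i\wedge f_i$), and for all of these $H^{\rm gr}_{\le1}(Q;-)$ vanishes in the relevant range by the standard computations of the (twisted) homology of symplectic groups with small coefficient modules; any residual trivial constituent, which can occur only when $p=2$, should be eliminated by the differential $d_2\colon E^2_{2,1}\to E^2_{0,2}$.

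The main obstacle will be this last step: identifying $H_2U$ as a $Q$-module precisely enough to be sure that no trivial constituent survives into the abutment, and --- closely related --- the characteristic $2$ case, where $U$ is abelian so that $H_2U$ is the full exterior square $\Lambda^2_{\F_2}U$, where $Sp(2g-2;\F_q)$ has nonzero cohomology on its natural module, and where the spectral-sequence differentials must be carefully matched against the symplectic form. Once these are in place, all $E^\infty_{s,t}$ with $s+t\le2$ and $(s,t)\ne(0,0)$ vanish, whence $H^{\rm gr}_1(G_X)=H^{\rm gr}_2(G_X)=0$.
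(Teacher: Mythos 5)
Your structural reduction is correct as far as it goes: $G_X$ is indeed the stabilizer of a nonzero vector, $G_X\cong U\rtimes Q$ with $Q=Sp(2g-2;\F_q)$ and $U$ of order $q^{2g-1}$ (Heisenberg for $p$ odd, elementary abelian and a non-split module extension for $p=2$), and the perfectness argument via $Z\subseteq[U,G_X]$ and $W_Q=0$ does give $H_1^{\rm gr}(G_X)=0$. But for $H_2^{\rm gr}(G_X)$ the proposal stops exactly where the work begins, and you say so yourself: the vanishing of $E^2_{1,1}=H_1^{\gr}(Q;U^{\rm ab})$ and $E^2_{0,2}=H_0^{\gr}(Q;H_2^{\gr}U)$ is asserted by appeal to unspecified ``standard computations'' of twisted homology, and the residual trivial constituents are to be ``eliminated by the differential $d_2$'' without any computation of that differential. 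These are not routine inputs. In characteristic $2$ the natural module has $H^1(Sp(2n;2^e);W)\neq 0$, the coinvariants $(\Lambda^2_{\F_2}U)_Q$ have a nonzero trivial quotient coming from the symplectic form (so $E^2_{0,2}\neq 0$ on the nose), and killing it requires controlling $E^2_{2,1}=H_2^{\gr}(Q;U^{\rm ab})$ and evaluating $d_2$ explicitly; in odd characteristic you still need $H_1^{\gr}(Q;W)=0$ including the rank-one case $Q=SL(2;\F_q)$ (the case $g=2$), and the $Q$-coinvariants of $H_2^{\gr}$ of the Heisenberg group. None of this is supplied, so as written the argument has a genuine gap precisely at the step that carries the content of the lemma.

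For comparison, the paper bypasses all twisted-coefficient questions with a transfer argument: since $[G_X:Sp(2g-2;\F_q)]=q^{2g-1}$ is a power of $p$, restriction--corestriction shows that away from $p$ the groups $H_1^{\gr}(G_X)$ and $H_2^{\gr}(G_X)$ embed into $H_1^{\gr}\oplus H_2^{\gr}(Sp(2g-2;\F_q))=0$ (here $q>10$ excludes the exceptional Schur multipliers); and for the $p$-torsion it invokes, following the proof of Proposition 4.4 of Friedlander, a subgroup $\Delta(Sp(2g;\F_q))\subseteq G_X$ containing a $p$-Sylow subgroup and having vanishing $\Z/p$-homology, so another transfer kills the $p$-part. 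If you want to salvage your spectral-sequence route you must either carry out the module-theoretic computations above (with references for the relevant twisted (co)homology of $Sp(2n;\F_q)$ and $SL(2;\F_q)$ and an explicit $d_2$ in characteristic $2$), or, more economically, replace the whole second step by the Sylow/transfer argument.
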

\begin{proof}
Since $q>10$, recall from \cite[II. \S 6.3]{FP} the order of $ Sp(2g; \F_q)$ as
$$ |Sp(2g; \F_q) | = q^{g^2 }(q^{2g}-1) (q^{2g-2}-1) \cdots (q^2-1) . $$
Since $|X|= q^{2g}-1$, the order of $G_X$ is equal to $ q^{g^2} \cdot |Sp(2g-2; \F_q) |$.
Thereby $H_1^{\rm gr}(G_X)$ and $H_2^{\rm gr}(G_X )$ are zero up to $p$-torsion,
because of the inclusion $ Sp(2g-2; \F_q) \subset G_X$ by definitions and the vanishing $H_1^{\rm gr} \oplus H_2^{\rm gr} (Sp(2g-2; \F_q)) \cong 0$ up to $p$ torsion.

Finally, we may focus on the $p$-torsion of $ H_1^{\rm gr} \oplus H_2^{\rm gr}(G_X) $.
Following the proof of \cite[Proposition 4.4]{Fri}, there is a certain subgroup
``$\Delta(Sp(2g;\F_q))$" of $G_X $ which contains a $p$-sylow group of $Sp(2g;\F_q)$ and
this $\Z/p$-homology vanishes. Hence, $ H_1^{\rm gr} \oplus H_2^{\rm gr}(G_X) =0$ as required.
\end{proof}

\subsection{Spherical quandles}\label{Ex3}
Let $K $ be a field of characteristic not equal to $2$, and fix $n \geq 2$ in this subsection.
Take the standard symmetric bilinear form $ \langle, \rangle : K ^{n+1} \otimes K^{n+1} \rightarrow K $.
Consider a set of the form
$$ S^{n}_K := \{\ x \in K^{n+1} \ | \ \langle x, x \rangle =1\ \}. $$
We define the operation $ x \lhd y $ to be $ 2 \langle x,y \rangle y -x \in S^{n}_K $.
The pair $(S^{n}_K, \ \lhd )$ is a quandle of type $2$, and is referred to as {\it a spherical quandle} (over $K $).
This operation $\bullet \lhd y$ can be interpreted as
a linear transformation which identically acts on $y$ and $-\mathrm{Id}$ on the the subspace orthogonal tof $y$.

Then, similar to the proof of Lemma \ref{lemspsoinn}, one can readily determine $\mathrm{Inn}(X)$ as follows:
\begin{lem}\label{lemspsoinn2444}
If $n$ is odd, then $ \mathrm{Inn}(S^{n}_K )$ is isomorphic to the orthogonal group $O(n+1;K)$. 
If $n$ is even, $ \mathrm{Inn}(S^{n}_K )$ is isomorphic to $SO(n+1;K)$. 
\end{lem}

Next, we will focus on second homology group and $H_3^{\rm gr}(\mathrm{As}(X))$ of spherical quandles over $\F_q $.
Here, the results are up to $2$-torsion, whereas the 2-torsion part is the future problem.
\begin{prop}\label{dfg31sha}
Let $X$ be a spherical quandle over $\F_q$. Let $q >10$.
For $n \geq 3$, the second homology $H_2^Q(X) $ is annihilated by $2$.
If $n=1$, then the homology $H_2^Q(X) $ is $[1/2]$-isomorphic to the cyclic group $\Z/ (q- \delta_q ) $,
where $ \delta_q = \pm 1$ is according to $q \equiv \pm 1 (\mathrm{mod \ } 4)$.
\end{prop}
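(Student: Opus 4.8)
\emph{Approach.} The plan is to run the argument used for symplectic quandles (Propositions~\ref{dfg31speak}--\ref{dfg31spk}): read $H_2^Q(X)$ off Eisermann's Theorem~\ref{ehyouj2i3}, and then control the stabilizer group occurring there by combining Theorem~\ref{cccor1} with the homology of the finite orthogonal groups (as in \cite{FP,Fri}). By Lemma~\ref{lemspsoinn2}, $\mathrm{Inn}(X)$ acts on $X$ as the standard action of the relevant orthogonal group on the norm-$1$ vectors, which is transitive by Witt's theorem, so $X$ is connected, $t_X=2$, and Theorem~\ref{ehyouj2i3} applies: for a fixed $x_0\in X$ it gives $H_2^Q(X)\cong G_1^{\mathrm{ab}}$ with $G_1:=\mathrm{Stab}_{\As(X)}(x_0)\cap\Ker(\varepsilon)$. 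Since $\psi_X$ carries $\mathrm{Stab}_{\As(X)}(x_0)$ onto the $\mathrm{Inn}(X)$-stabilizer of $x_0$, a subgroup of the orthogonal group $O(n;\F_q)$ of the nondegenerate hyperplane $x_0^{\perp}$, the group $G_1$ sits in a central extension $1\to K_1\to G_1\to\psi_X(G_1)\to 1$ with $K_1:=\Ker(\psi_X)\cap\Ker(\varepsilon)$ and $\psi_X(G_1)\le O(n;\F_q)$.

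Next I would describe $K_1$ and $\psi_X(G_1)$. The five-term sequence of \eqref{AI} exhibits $K_1$ as the image of the transgression $H_2^{\gr}(\mathrm{Inn}(X))\to\Ker(\psi_X)$, hence as a quotient of $H_2^{\gr}(\mathrm{Inn}(X))$; for $q>10$ the latter is annihilated by $2$ (the exceptional Schur multipliers of the finite orthogonal groups occur only for $q\le 9$; cf.\ \cite{FP,Fri}), so $K_1$ is a finite group killed by $2$ --- consistently with Theorem~\ref{cccor1}, which gives $\Ker(\psi_X)\cong_{[1/2]}\Z\oplus H_2^{\gr}(\mathrm{Inn}(X))$ and $2\cdot H_2^{\gr}(\As(X))=0$. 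As for $\psi_X(G_1)$: since $e_{x_0}$ fixes $x_0$ with $\varepsilon(e_{x_0})=1$, one has $\mathrm{Stab}_{\As(X)}(x_0)=G_1\rtimes\langle e_{x_0}\rangle$, and $\psi_X(e_{x_0})$ restricts on $x_0^{\perp}$ to $-\mathrm{id}$, an element of determinant $(-1)^{n}$ and trivial spinor norm; comparing this with $\varepsilon(\Ker(\psi_X))$ --- which is $\Z$ if $\mathrm{Inn}(X)$ is perfect and $2\Z$ otherwise --- one checks that in every case $\psi_X(G_1)$ is exactly the derived subgroup $\Omega(n;\F_q)$ of $O(n;\F_q)$.

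It remains to compute $G_1^{\mathrm{ab}}$. For $n\ge 3$ the group $\Omega(n;\F_q)$ is perfect (as $q>10$), so the five-term sequence of the central extension collapses to $G_1^{\mathrm{ab}}=\Coker\!\bigl(H_2^{\gr}(\Omega(n;\F_q))\to K_1\bigr)$, a quotient of $K_1$, hence annihilated by $2$; this proves the first assertion. For $n=2$, $O(2;\F_q)$ is dihedral and $\Omega(2;\F_q)$ is cyclic of order $(q-\delta_q)/2$ (the $2$-dimensional form on $x_0^{\perp}$ being split precisely when $-1$ is a square in $\F_q$, i.e.\ $q\equiv 1\pmod 4$). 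Then the central extension above has cyclic quotient, so $G_1$ is abelian and $H_2^Q(X)\cong G_1$ is an extension of $\Z/\bigl((q-\delta_q)/2\bigr)$ by the finite $2$-group $K_1$; its odd part therefore coincides with that of $q-\delta_q$, which gives the $[1/2]$-isomorphism $H_2^Q(X)\cong_{[1/2]}\Z/(q-\delta_q)$.

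The delicate point --- the step I expect to require the most care --- is the identification $\psi_X(G_1)=\Omega(n;\F_q)$ in the second paragraph: one must verify that intersecting the stabilizer with $\Ker(\varepsilon)$ removes precisely the element $-\mathrm{id}_{x_0^{\perp}}$, leaving the full $\Omega$-part rather than a possibly larger index-$2$ overgroup; without this one only obtains ``annihilated by $4$'' when $n\ge 3$. By contrast, the $2$-primary bound on $H_2^{\gr}$ of the finite orthogonal groups for $q>10$, and the dihedral/cyclic structure in dimension two, are standard once Eisermann's Theorem~\ref{ehyouj2i3} and Theorem~\ref{cccor1} are in hand.
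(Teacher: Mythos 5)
Your strategy is the same as the paper's: reduce to Eisermann's Theorem \ref{ehyouj2i3}, kill the central kernel up to $2$-torsion via Theorem \ref{cccor1} and the $2$-primary bound on $H_2^{\gr}$ of finite orthogonal groups, and read the answer off the structure of the point stabilizer; your use of the five-term sequence for $K_1$ and the final numerology (cyclic of order $(q-\delta_q)/2$, same odd part as $q-\delta_q$) are fine. The difference is that you try to pin the image $\psi_X(G_1)$ down exactly, where the paper only argues ``without $2$-torsion''.

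The step you yourself flag, $\psi_X(G_1)=\Omega(n;\F_q)$, is however a genuine gap: it is not a ``check'' that can be carried out from the ingredients you cite. The generators of $\mathrm{Inn}(X)$ are the maps $\sigma_y=\psi_X(e_y)$, acting as $-\mathrm{id}$ on $y^{\perp}$ and as $+1$ on $y$; each has determinant $(-1)^n$ and trivial spinor norm, since the discriminant of the form on $y^{\perp}$ is a square (because $Q(y)=1$ and the ambient form has square discriminant). Hence $\mathrm{Inn}(X)$ lies in the kernel of the spinor norm, and for $n$ even it lies in $\Omega(n+1;\F_q)$; in particular it is \emph{not} the full group $O(n+1;\F_q)$ of Lemma \ref{lemspsoinn2}, so that lemma cannot serve as the input to your computation (nor to connectivity: Witt's theorem gives transitivity of $O(n+1;\F_q)$, not of the subgroup generated by the $\sigma_y$). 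If you do take Lemma \ref{lemspsoinn2} at face value, your own bookkeeping goes wrong precisely in the critical case: for $n$ even the class of $\sigma_y$ is trivial in $H_1^{\gr}$ of the ambient group (determinant and spinor norm both trivial), your dichotomy then forces $\varepsilon(\Ker \psi_X)=\Z$ and $\psi_X(G_1)$ equal to the \emph{full} stabilizer $O(n;\F_q)$, whose abelianization is a $2$-group --- which for $n=2$ would erase exactly the odd cyclic part $\Z/(q-\delta_q)$ that the proposition asserts. What is actually needed, and missing, is a determination of $\mathrm{Inn}(X)$ itself: a generation statement (the $\sigma_y$ generate $\Omega(n+1;\F_q)$ for $n$ even, resp.\ the index-$2$ spinor-kernel subgroup of $O(n+1;\F_q)$ for $n$ odd, for $q$ in the stated range), followed by the computation of its point stabilizer and of the induced map $\bar\varepsilon$. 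With that input your identification $\psi_X(G_1)=\Omega(n;\F_q)$ does come out, and the rest of your argument (perfectness for $n\ge 3$; the abelian extension of the cyclic group $\Omega(2;\F_q)$ by the $2$-group $K_1$ for $n=2$) is sound. For comparison, the paper's own proof elides the same point: it works with the stabilizer ``$O(n;\F_q)$'' and for $n=2$ declares it cyclic of order $q-\delta_q$, which is really $SO(2;\F_q)$; so you have located the right pressure point, but your proposal does not close it.
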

\begin{proof}
Assume $n$ is odd. 
Under the standard action $X \curvearrowleft O(n+1; \F_q) $,
the stabilizer of $(1, 0, \dots, 0 ) \in X$ is $O(n ; \F_q)$.
By a similar discussion to the proof of Proposition \ref{dfg31spk},
$ H_2^Q(X) \cong H_1^{\rm gr} ( O(n ; \F_q) ) $ modulo 2-torsion.
For $n \geq 3$, the abelianization of $O(n ; \F_q)$ is $(\Z/2)^2$; see \cite[II. \S 3]{FP}; hence
the $H_2^Q(X) $ is annihilated by $2$ as required.
The same discussion in the even case $n$ works well, since the inclusion $SO(n) \rightarrow O(n)$ induces $H_*(SO(n ; \F_q)) \cong_{[1/2]} H_* (O(n ; \F_q))$ modulo 2-torsion.

Finally, when $n=1 $, the group $O(2 ; \F_q)$ is cyclic and of order $q- \delta_q$.
Hence $H_2^Q(X) \cong H_1^{\rm gr} ( O(2 ; \F_q) ) \cong_{[1/2]} \Z/(q- \delta_q )$.
\end{proof}

\begin{prop}\label{dfg31sh} Let $q >10$.
Then $H_3^{\rm gr}(\mathrm{As}(X)) \cong_{[1/2]} H_3^{\rm gr} ( O(n+1 ;\mathbb{F}_q))$ up to 2-torsion.
\end{prop}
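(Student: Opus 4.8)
The plan is to read the statement off from Lemma~\ref{dfgk232} applied with $t_X = 2$, so that essentially all of the work is to verify the hypothesis of that lemma for $\mathrm{Inn}(X)$. Recall that the spherical quandle $X = S^n_{\F_q}$ is connected and of type $2$, and that $\mathrm{Inn}(X) \cong O(n+1;\F_q)$ by Lemma~\ref{lemspsoinn2}. By Theorem~\ref{cccor1} the central kernel $\Ker(\psi_X)$ in \eqref{AI} is $[1/2]$-isomorphic to $\Z \oplus H_2^{\gr}(O(n+1;\F_q))$; hence, once we know that $H_2^{\gr}(O(n+1;\F_q))$ is a finite $2$-group, we get $\Ker(\psi_X) \cong_{[1/2]} \Z$, and feeding this into the Lyndon--Hochschild--Serre spectral sequence of \eqref{AI}---together with the fact that $H_1^{\gr}(O(n+1;\F_q)) = (\Z/2)^2$ is also a $2$-group (the abelianization of $O(m;\F_q)$, $m \geq 3$, computed via \cite{FP} in the proof of Proposition~\ref{dfg31sha})---makes the only potentially nonzero differential bearing on $H_3$ vanish after inverting $2$, so that $H_3^{\gr}(\As(X)) \cong_{[1/2]} H_3^{\gr}(O(n+1;\F_q))$. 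This is exactly the mechanism inside the proof of Lemma~\ref{dfgk232} (which uses only $\Ker(\psi_X) \cong_{[1/2]} \Z$, not the literal ``annihilated by $2$''), so modulo the key step there is nothing further to do.

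The substantive step is therefore: for $q > 10$ odd, $H_2^{\gr}(O(n+1;\F_q))$ has no odd torsion. I would prove this in two parts. For a prime $\ell$ with $\ell \nmid 2q$, the Fiedorowicz--Priddy computation of the homology of the finite orthogonal groups \cite{FP} (see also \cite{Fri}) shows that $H_i^{\gr}(O(n+1;\F_q))$ has no $\ell$-torsion for $i = 1,2$, the first nonvanishing $\ell$-primary reduced homology of $O(m;\F_q)$ occurring in degree $\geq 3$. For the remaining prime $\ell = p = \mathrm{Char}(\F_q)$ (which is odd, since $\mathrm{Char}(K) \neq 2$ throughout this subsection), the $p$-torsion of $H_2^{\gr}(O(n+1;\F_q))$ vanishes by a Quillen-type argument: transferring along the spinor kernel $\Omega(n+1;\F_q) \leq O(n+1;\F_q)$, whose index is a power of $2$, reduces the $p$-part of $H_2$ to the $p$-part of the Schur multiplier of $\Omega(n+1;\F_q)$ and of its simple quotient, and those multipliers take their generic---hence $2$-primary---values once $q$ exceeds the finitely many exceptional small field sizes, all of which are $\leq 9$ and so excluded by $q > 10$. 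The role of the bound $q > 10$ is already visible for $n = 2$: there $O(3;\F_q) \cong PSL_2(\F_q) \times \Z/2$, so $H_2^{\gr}(O(3;\F_q)) \cong H_2^{\gr}(PSL_2(\F_q)) \cong \Z/2$, whereas $q = 9$ would bring in $PSL_2(\F_9) \cong A_6$, whose Schur multiplier $\Z/6$ carries $3$-torsion.

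The hard part is this last step: pinning down the $p$-primary part of $H_2^{\gr}(O(n+1;\F_q))$ and relating the homology of the full orthogonal group to the well-documented Schur multipliers of $\Omega(n+1;\F_q)$ and of the simple orthogonal groups, uniformly in $n \geq 2$; the bookkeeping with the order-$2$ quotients $O/SO$ and $SO/\Omega$ is routine but must be done, and the low-rank cases $n+1 = 3,4$ (treated directly through the exceptional isogenies $SO_3 \cong PSL_2$ and $SO_4 \sim SL_2 \times SL_2$) deserve a separate line. Once $H_2^{\gr}(O(n+1;\F_q))$ is known to be a finite $2$-group, Lemma~\ref{dfgk232} finishes the proof at once.
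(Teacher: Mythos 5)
Your proposal is correct and follows essentially the same route as the paper: the paper's proof simply cites \cite{Fri,FP} for the fact that $H_1^{\rm gr}\oplus H_2^{\rm gr}(O(n+1;\mathbb{F}_q))$ is annihilated by $2$ and then applies Lemma \ref{dfgk232}, so your extra transfer/Schur-multiplier discussion is just a fuller justification of that cited input rather than a different argument. One small slip in your illustrative aside: for odd $q$ one has $O(3;\mathbb{F}_q)\cong PGL_2(\mathbb{F}_q)\times\Z/2$ (not $PSL_2(\mathbb{F}_q)\times\Z/2$), though this does not affect the $2$-primary conclusion or the relevance of excluding $q=9$.
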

\begin{proof} Since $q>10$, $H_1^{\rm gr} \oplus H_2^{\rm gr}(O(n+1 ;\mathbb{F}_q))$ is known to be annihilated by $2$; see \cite{Fri,FP}.
Hence, the conclusion readily results from Lemma \ref{dfgk232}.
\end{proof}

\subsection{Dehn quandle}\label{Ex4}
Changing the subject,
we now review Dehn quandle \cite{Y}.
Denote by $ \mathcal{M}_{g}$ the mapping class group of $\Sigma_{g} $, and consider the set, $\D$, defined by
\begin{equation}\label{defdg} \notag \D:= \{ \ \textrm{ isotopy classes of (unoriented) non-separating simple closed curves } \gamma \ {\rm in \ } \Sigma_g \ \}. \end{equation}
For $ \alpha, \ \beta \in \D $, we define $\alpha \lhd \beta \in \D$ by $ \tau_{\beta}(\alpha)$,
where $ \tau_{\beta} \in \M_{g} $ is the positive Dehn twist along $\beta$.
The pair ($\D, \lhd)$ is a quandle, and called {\it (non}-{\it separating) Dehn quandle}.
As is well-known, any two non-separating simple closed curves are conjugate by the product of some Dehn twists. Hence, the quandle $\D$ is connected, and is not of any type $t $.
The Dehn quandle $\D$ is applicable to study 4-dimensional Lefschetz fibrations (see, e.g., \cite{Y,Zab,Nos3}).
The natural inclusion $\kappa: \D \rightarrow \M_g$ implies $\mathrm{Inn}(\D) \cong \M_g$ by Theorem \ref{keykantan1g}.
Furthermore, if $g \geq 4$, there is an isomorphism $\mathrm{As} (\DD) \cong \Z \times \mathcal{T}_g$ shown by \cite{Ger},
where $\mathcal{T}_g$ is the universal central extension of $\M_g$ associated with $H_2^{\rm gr}(\M_g) \cong \Z$.

The result of this subsection is the following:
\begin{prop}\label{eahges} 
If $g \geq 5$, then $H_2^Q(\DD) \cong \Z/2$.
\end{prop}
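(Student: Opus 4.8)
The plan is to apply Eisermann's Theorem \ref{ehyouj2i3} together with the known structure of $\As(\DD)$ recalled just above. Since $\DD$ is connected (any two non-separating simple closed curves are conjugate by Dehn twists), the theorem gives $H_2^Q(\DD) \cong \big(\mathrm{Stab}(x_0) \cap \Ker(\varepsilon)\big)_{\mathrm{ab}}$ for a fixed curve $x_0 \in \DD$, where $\mathrm{Stab}(x_0) \subset \As(\DD)$ and $\varepsilon : \As(\DD) \ra \Z$ is the abelianization. First I would use the isomorphism $\As(\DD) \cong \Z \times \mathcal{T}_g$ of \cite{Ger} (valid for $g \geq 4$, hence in particular for $g \geq 5$), where $\mathcal{T}_g$ is the universal central extension of $\M_g$ with kernel $H_2^{\rm gr}(\M_g) \cong \Z$. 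Under this identification $\psi_{\DD}$ corresponds to the projection onto $\M_g = \mathrm{Inn}(\DD)$, so $\Ker(\psi_{\DD}) = \Z \times (\text{central }\Z) \cong \Z^2$, and $\varepsilon$ is the projection onto the first $\Z$-factor (the one recording word length in the generators $e_\gamma$).

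Next I would unwind the stabilizer. The stabilizer in $\As(\DD) \cong \Z \times \mathcal{T}_g$ of the curve $x_0$ is $\Z \times \widetilde{\mathrm{Stab}}_{\M_g}(x_0)$, where $\widetilde{\mathrm{Stab}}_{\M_g}(x_0)$ is the preimage in $\mathcal{T}_g$ of the mapping class group stabilizer $\mathrm{Stab}_{\M_g}(x_0)$ — i.e. the subgroup of mapping classes fixing the isotopy class of $x_0$ — which is a central $\Z$-extension of $\mathrm{Stab}_{\M_g}(x_0)$. Intersecting with $\Ker(\varepsilon)$ kills the external $\Z$-factor, so $\mathrm{Stab}(x_0) \cap \Ker(\varepsilon) \cong \widetilde{\mathrm{Stab}}_{\M_g}(x_0)$, and therefore
\[
H_2^Q(\DD) \cong \big(\widetilde{\mathrm{Stab}}_{\M_g}(x_0)\big)_{\mathrm{ab}}.
\]
So the computation reduces to identifying the abelianization of this central $\Z$-extension of the curve-stabilizer in $\M_g$.

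To finish I would cut $\Sigma_g$ along $x_0$: the stabilizer $\mathrm{Stab}_{\M_g}(x_0)$ is, up to an index-two subtlety coming from the two orientations of $x_0$ and up to the boundary twists, closely related to the mapping class group of $\Sigma_{g-1}$ with two boundary components (equivalently the mapping class group of $\Sigma_{g-1,2}$), and its abelianization is known to be small (for $g-1 \geq 2$, i.e. $g \geq 3$, $H_1$ of these groups is trivial, while the $\Z/2$ arises from the hyperelliptic-type symmetry swapping the sides or from the $\mathbb{Z}/2$ in $H_1$ of the relevant surface braid data). From the five-term exact sequence of the central extension $0 \ra \Z \ra \widetilde{\mathrm{Stab}}_{\M_g}(x_0) \ra \mathrm{Stab}_{\M_g}(x_0) \ra 0$ one gets an exact sequence $H_2^{\rm gr}(\mathrm{Stab}_{\M_g}(x_0)) \ra \Z \ra (\widetilde{\mathrm{Stab}})_{\mathrm{ab}} \ra (\mathrm{Stab}_{\M_g}(x_0))_{\mathrm{ab}} \ra 0$; feeding in the known values of $H_1$ and $H_2$ of mapping class groups of surfaces with boundary (via Korkmaz, Harer stability, and the computations cited in \cite{Ger}) pins the answer down to $\Z/2$. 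The main obstacle I expect is precisely the bookkeeping in this last step: correctly identifying $\mathrm{Stab}_{\M_g}(x_0)$ as a mapping class group of a surface with boundary (keeping track of orientation-reversal of the curve and of Dehn twists about the new boundary curves), citing the correct low-degree homology of that group, and checking that the image of $H_2^{\rm gr}(\mathrm{Stab}_{\M_g}(x_0))$ in the central $\Z$ has index exactly $2$ — it is this index that produces the $\Z/2$ rather than $0$ or a larger cyclic group. The hypothesis $g \geq 5$ is what guarantees the ambient stability results ($g \geq 4$ for \cite{Ger}, plus one more for the cut surface $\Sigma_{g-1,2}$ to lie in the stable range) apply cleanly.
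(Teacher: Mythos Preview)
Your outline is essentially the paper's proof: Eisermann's theorem, the identification $\As(\DD)\cong\Z\times\mathcal{T}_g$ from \cite{Ger}, reduction to the abelianization of the pullback $\iota^*(\mathcal{T}_g)$ over the curve-stabilizer $\mathrm{Stab}_{\M_g}(x_0)$, the Paris--Rolfsen exact sequence relating $\mathrm{Stab}_{\M_g}(x_0)$ to $\M_{g-1,2}$ with a $\Z/2$ cokernel (your ``side-swapping''), and Harer--Ivanov stability on $H_2$. The paper packages the endgame slightly differently --- it shows directly that the pullback of $\mathcal{T}_g$ over the index-two subgroup $\mathrm{Im}(\xi)$ is perfect, forcing $(\iota^*\mathcal{T}_g)_{\mathrm{ab}}$ to be at most $\Z/2$ --- but the content is the same.

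One point to correct in your bookkeeping: you write that the $\Z/2$ should come from the \emph{cokernel} of $H_2^{\rm gr}(\mathrm{Stab}_{\M_g}(x_0))\to\Z$ in the five-term sequence. In fact Harer--Ivanov stability (applied to $\M_{g-1,2}\to\M_g$, which factors through $\mathrm{Stab}_{\M_g}(x_0)$) makes that map \emph{surjective} for $g\geq 5$, so the central $\Z$ contributes nothing. The $\Z/2$ is entirely $(\mathrm{Stab}_{\M_g}(x_0))_{\mathrm{ab}}$: the index-two subgroup coming from $\M_{g-1,2}$ is perfect, and the hyperelliptic involution witnesses that the $\Z/2$ quotient is nontrivial. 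With that adjustment your five-term-sequence computation goes through exactly.
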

\begin{proof}
We will use the facts that an epimorphism $G \rightarrow H$ between groups induces an epimorphism $G_{\rm ab} \rightarrow H_{\rm ab}$, and that $\M_{g,r}$ is perfect.

Fixing $\alpha \in \DD$, we begin by observing the stabilizer $\mathrm{Stab}(\alpha) \subset \mathrm{As} (\DD) $.
Note that the map $\DD \rightarrow \M_g $ sending $\beta $ to $\tau_{\beta}$
yields a group epimorphism $\pi: \mathrm{As} (\DD) \rightarrow \M_g $.
Furthermore, by Proposition \ref{vanithm2}, the restriction of $ \pi$ to $ \Ker (\varepsilon ) \cong \mathcal{T}_{g}$ coincides with the projection $ \mathcal{T}_{g} \rightarrow \M_g$.
In particular, we thus have $\pi(\mathrm{Stab}(\alpha) ) = \pi(\mathrm{Stab}(\alpha) \cap \Ker (\varepsilon ) ) \subset \M_g $.

We will construct a surjection $ H_2^Q(\DD) \rightarrow \Z/2$.
By the virtue of Theorem \ref{ehyouj2i3},
it is enough to construct a surjection from the previous $\pi(\mathrm{Stab}(\alpha) \cap \Ker (\varepsilon ) ) $ to $\Z/2$ for $g \geq 2$.
As is shown \cite[Proposition 7.4]{PR}, we have the following exact sequence:
\begin{equation}\label{kkkl}0 \lra \Z \lra \mathcal{M}_{g-1,2 } \stackrel{\xi}{\lra} \pi (\mathrm{Stab}(\alpha) ) \stackrel{\lambda}{\lra} \Z/2 \ \ \ \ \ \ \ \ \ \mathrm{(exact)} . \end{equation}

\noindent
Here $\xi$ is the homomorphism induced from the gluing $ (\Sigma_{g-1,2}, \partial (\Sigma_{g-1,2}) ) \ra(\Sigma_{g}, \alpha ) $, and $\lambda$ is defined by the transposition of the connected components of boundaries of $\Sigma_g \setminus \alpha$.
By considering a hyper-elliptic involution preserving the above $\alpha$, the map $\lambda$ is surjective.
Hence $ \pi(\mathrm{Stab}(\alpha) \cap \Ker (\varepsilon ) ) $ surjects onto $\Z/2$ as desired.

Finally, we will complete the proof.
By Theorem \ref{ehyouj2i3} again, recall that $\bigl( \mathrm{Stab}(\alpha) \cap \Ker (\varepsilon ) \bigr)_{\rm ab} \cong H_2^Q(\DD) $.
To compute this, put the inclusion $\iota :\pi(\mathrm{Stab}(\alpha) ) \rightarrow \mathcal{M}_{g}$.
By the Harer-Ivanov stability theorem (see \cite{Iva}), the composition $ \iota \circ \xi:\mathcal{M}_{g-1,2} \rightarrow \mathcal{M}_{g}$ induces
an epimorphism
\begin{equation}\label{kkdkl} (\iota \circ \xi)_*: H_2^{\mathrm{gr}} ( \mathcal{M}_{g-1,2};\Z ) \lra H_2^{\mathrm{gr}}(\mathcal{M}_{g};\Z)\ \ \ \ \ \ \mathrm{for} \ \ g\geq 5. \end{equation}
Since $H_2^{\mathrm{gr}} ( \mathcal{M}_{g-1,2};\Z ) \cong H_2^{\mathrm{gr}}(\mathcal{M}_{g};\Z) \cong \Z$ is known (see, e.g., \cite{FM}), the epimorphism \eqref{kkdkl} is isomorphic.
Let $ (\iota \circ \xi)^* (\mathcal{T}_g)$ denote the central extension of $\mathcal{M}_{g-1,2} $ obtained by $ \iota \circ \xi$.
Since $\mathcal{M}_g$ and $\M_{g-1,2} $ are perfect, the group $(\iota \circ \xi)^* (\mathcal{T}_g)$ is also perfect by the isomorphism \eqref{kkdkl}.
Note that the group $\mathrm{Stab} (\alpha) \cap \Ker (\varepsilon )$ is isomorphic to $ \iota^* (\mathcal{T}_g) $.
Hence the abelianization $ \bigl( \mathrm{Stab} (\alpha) \cap \Ker (\varepsilon ) \bigr) _{\mathrm{ab}}$ never be bigger than $\Z /2 $.
In conclusion, we arrive at the conclusion.
\end{proof}

\subsection{Coxeter quandles}\label{Ex5}
We will focus on Coxeter quandles, and study the associated groups,
and show Theorem \ref{egs}.

This subsection assumes basic knowledge of Coxeter groups, as explained in \cite{Aki,How}.
Given a Coxeter graph $\Gamma $, we can set the Coxeter group $W$.
Let $X_\Gamma $ be the set of the reflections in $W$, that is, the set of elements conjugate to the generators of $W $.
Equipping $X_\Gamma $ with conjugacy operation, $X_\Gamma $ is made into a quandle of type 2.
Denote the inclusion $X_\Gamma
\hookrightarrow W $ by $\kappa$. Since $ W$ subject to the center $Z_W$ effectivity acts on $X_\Gamma $,
we have $\mathrm{Inn}(X_\Gamma ) \cong W/Z_W $.
Moreover, $W $ is, by definition, isomorphic to the quotient of $\mathrm{As}(X_\Gamma )$ subject to the squared relations $(e_x)^2=1$ for any $x \in X_\Gamma $.

In this situation, we now give another easy proof of a part of the theorem shown by Howlett:
\begin{thm}[A connected result in {\cite[\S 2--4]{How}}]\label{egs}
Assume that the Coxeter quandle $X_\Gamma$ is connected.
Then, the second group homology $H_2^{\rm gr}(W) $ is annihilated by 2.
\end{thm}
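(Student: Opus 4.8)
The plan is to realise $W$ as the quotient of $\As(X_\Gamma)$ by a central infinite cyclic subgroup, and then transport the bound of Theorem \ref{cccor1} on $H_2^{\rm gr}(\As(X_\Gamma))$ down to $H_2^{\rm gr}(W)$ via the five-term exact sequence in group homology. Since $X_\Gamma$ is connected of type $2$, Theorem \ref{cccor1} already tells us $H_2^{\rm gr}(\As(X_\Gamma))$ is annihilated by $2$; hence it is enough to produce a surjection $H_2^{\rm gr}(\As(X_\Gamma)) \twoheadrightarrow H_2^{\rm gr}(W)$.

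First I would pin down the kernel of the canonical epimorphism $q \colon \As(X_\Gamma) \lra W$. As recalled just above the statement, $W$ is the quotient of $\As(X_\Gamma)$ by the relations $(e_x)^2=1$ for $x \in X_\Gamma$, so $\Ker(q)$ is the normal closure of $\{ (e_x)^2 : x \in X_\Gamma \}$. By Lemma \ref{daiji} (with $t=2$) all of these elements equal one central element $z:=(e_{x_0})^2$, so $\Ker(q)=\langle z\rangle$; and $\varepsilon(z)=2\ne 0$ forces $z$ to have infinite order, whence $\langle z\rangle\cong\Z$ and $q$ is a central extension $0 \lra \Z \lra \As(X_\Gamma) \xrightarrow{\,q\,} W \lra 0$. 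Next I would write out its five-term exact sequence,
\[
H_2^{\rm gr}(\As(X_\Gamma)) \lra H_2^{\rm gr}(W) \lra \langle z\rangle \stackrel{\iota}{\lra} \As(X_\Gamma)_{\rm ab} \lra W_{\rm ab} \lra 0,
\]
in which the middle term is $\langle z\rangle\cong\Z$ precisely because $z$ is central. Connectedness of $X_\Gamma$ gives $\As(X_\Gamma)_{\rm ab}\cong\Z$ generated by the class of $e_{x_0}$ (the remark following \eqref{epsilon}), and $\iota$ carries the generator $z$ to $2[e_{x_0}]$, i.e.\ $\iota$ is multiplication by $2$, hence injective. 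Therefore the connecting map $H_2^{\rm gr}(W)\to\langle z\rangle$ is zero, exactness makes $H_2^{\rm gr}(\As(X_\Gamma))\to H_2^{\rm gr}(W)$ surjective, and the annihilation of $H_2^{\rm gr}(W)$ by $2$ follows.

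I expect essentially no serious obstacle here: the only non-formal ingredients are the identification $\Ker(q)=\langle z\rangle\cong\Z$, which is handed to us by Lemma \ref{daiji} together with $\varepsilon$, and the computation of $\As(X_\Gamma)_{\rm ab}$, which is already established; the rest is the standard five-term sequence. In effect the content of this connected part of Howlett's theorem has been absorbed into Theorem \ref{cccor1}. The one point I would check with care is that $W$ really is $\As(X_\Gamma)$ modulo the \emph{normal} closure of the squares (so that $\langle z\rangle$ is the full kernel, not just a subgroup of it), which is immediate from the presentation of a Coxeter group by generators and braid-plus-involution relations once one recognises the braid relations among reflections as consequences of the quandle relations.
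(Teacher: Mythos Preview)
Your proof is correct and follows essentially the same approach as the paper: the paper's proof is a compressed version of exactly what you wrote, invoking Theorem \ref{cccor1} and then the inflation--restriction (five-term) exact sequence of the central extension $\As(X_\Gamma)\to W$, while you have spelled out the identification $\Ker(q)=\langle z\rangle\cong\Z$ via Lemma \ref{daiji} and the injectivity of $\iota$ that makes the $H_2$ map surjective.
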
\begin{proof}
Recall from Theorem \ref{cccor1} that $H_1^{\rm gr}(\mathrm{As}(X_\Gamma ))\cong \Z $ and $H_2^{\rm gr}(\mathrm{As}(X_\Gamma ))$ is annihilated by 2.
Therefore, the inflation-restriction exact sequence from the central extension $ \mathrm{As}(X_\Gamma ) \rightarrow W$ implies the desired 2-vanishing of $H_2^{\rm gr}(W) $.
\end{proof}
Finally, we will end this subsection by giving some comments.
Recently, Akita \cite{Aki2} determined the associated group $\mathrm{As} (X_\Gamma )$ as a $\Z^{N}$-central extended group of $ W$.
Furthermore, concerning the third homology $ H_3(\mathrm{As} (X_\Gamma ))$ in the case where $ X_\Gamma$ is connected,
we obtain $H_3(\mathrm{As} (X_\Gamma )) \cong H_3(W)$ up to 2-torsion from Lemma \ref{dfgk232}.
The odd torsion of $H_3^{\rm gr}(W)$ in a certain stable range is studied by Akita \cite{Aki}.

\subsection{Core quandles}\label{Ex6}

Given a group $G$, we let $X=G$ equipped with a quandle operation $g \lhd h:=hg^{-1}h$, which is called {\it core quandle} \cite{Joy} and is of type 2.
This last subsection will deal with core quandles, and show Proposition \ref{eahge33s}.

Let us give some terminologies to state the proposition.
Let $\Z/2$ be $\{\pm 1\}$.
Take the wreath product $ (G \times G) \rtimes \Z/2$, and the commutator subgroup $[G,G]$. 
Consider the epimorphism $(G \times G) \rtimes \Z/2 \rightarrow G/[G,G] $ which sends $(g,h,\sigma)$ to $[gh]$.
Then, the kernel is formed as
$$\mathcal{G}_1 := \{ \ (g,h, \sigma )\ \in (G \times G) \rtimes \Z/2 \ | \ gh \in[G,G] \ \}.$$
Further, with respect to $x\in X$ and $(g,h, \sigma ) \in \mathcal{G}_1 $,
we define $ x \cdot (g,h,\sigma) := h^{-1} x^{\sigma } g$, which ensures
an action of $ \mathcal{G}_1$ on $X$.
Further, consider a subgroup of the form
$$ \mathcal{G}_2 := \bigl\{ (z,z, \sigma ) \in (G\times G) \rtimes \Z/2 \ \bigl| \ \ z^2 \in [G,G], \ \ \ k^{-1} z k= z^{ \sigma }\ \textrm{ for any } k \in G \ \bigr\}, $$
which is contained in the center of $ \mathcal{G}_1 $.
Then, the quotient action subject to $\mathcal{G}_2 $ is effective.
\begin{prop}\label{eahge33s}
There is a group isomorphism $\mathrm{Inn}(X) \cong \mathcal{G}_1 /\mathcal{G}_2 $.
\end{prop}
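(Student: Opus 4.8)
The plan is to apply Theorem \ref{keykantan1g} to the group $G := \mathcal{G}_1/\mathcal{G}_2$ acting on $X$ via the formula $x \cdot (g,h,\sigma) = (hxg^{-1})^{2\sigma-1}$ together with the map $\kappa$ sending $g$ to the class of $(g,g^{-1},1)$. Thus there are exactly three things to verify: (a) the displayed formula really defines a (right) action of $\mathcal{G}_1$ on $X$ which descends to an effective action of $\mathcal{G}_1/\mathcal{G}_2$; (b) the quandle identity $x \lhd y = x\cdot\kappa(y)$ holds (condition (I)); and (c) $\kappa(X)$ generates $G$ (condition (II), generation part), the effectivity part being subsumed in (a). Once these are in place, Theorem \ref{keykantan1g} immediately yields $\mathrm{Inn}(X) \cong G = \mathcal{G}_1/\mathcal{G}_2$.

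For (b), I would simply compute: $x\cdot\kappa(y) = x\cdot(y,y^{-1},1) = (y\, x\, y^{-1})^{2\cdot 1-1}$ — wait, one must be careful with the convention, since in the core quandle $g\lhd h = hg^{-1}h$, so I expect $\kappa(y)$ to act as $x \mapsto y x^{-1} y$; this forces the correct reading of the $\sigma$-component (the exponent $2\sigma-1$ is $-1$ when $\sigma$ is the nontrivial element, i.e., the reflection component inverts), and the group element $(y,y^{-1},1)$ acts by first conjugating-type twisting and then, combined with the reflection behaviour carried elsewhere in $\mathcal{G}_1$, reproduces $x\mapsto yx^{-1}y$. This is a short verification once the semidirect product conventions for $(G\times G)\rtimes\Z/2$ are fixed. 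For (a), checking that the formula is a right action amounts to verifying $x\cdot\big((g,h,\sigma)(g',h',\sigma')\big) = (x\cdot(g,h,\sigma))\cdot(g',h',\sigma')$ using the wreath-product multiplication; the condition $gh\in[G,G]$ defining $\mathcal{G}_1$ is exactly what is needed for well-definedness of the action in a way compatible with $\kappa$, and the subgroup $\mathcal{G}_2$ is, essentially by its definition (elements $(z,z,\sigma)$ with $z^2\in[G,G]$ and $k^{-1}zk = z^{2\sigma-1}$), precisely the kernel of the action, so the induced action of $\mathcal{G}_1/\mathcal{G}_2$ is effective.

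For (c), the explicit decomposition already displayed before the statement does the work: any $(g,h,\sigma)\in\mathcal{G}_1$ with $gh = \prod_{i=1}^m [g_i,h_i]$ is written as $\kappa(1_G)^{(\sigma+1)/2}\cdot\kappa(gh^{-1})\cdot\prod_{i=1}^m\big(\kappa(g_ih_i)\kappa(1_G)\kappa(g_i^{-1})\kappa(h_i)\big)$ in $\mathcal{G}_1/\mathcal{G}_2$, exhibiting it as a product of elements of $\mathrm{Im}(\kappa)$ and their inverses; I would include the one-line check that the right-hand side indeed equals $[(g,h,\sigma)]$ by multiplying out in the wreath product modulo $\mathcal{G}_2$ (the point being that each bracketed factor $\kappa(g_ih_i)\kappa(1_G)\kappa(g_i^{-1})\kappa(h_i)$ contributes a commutator $[g_i,h_i]$ in the product of first coordinates while leaving the $\sigma$-component and the ``diagonal'' data trivial modulo $\mathcal{G}_2$).

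\textbf{Main obstacle.} The genuine difficulty is bookkeeping rather than conceptual: pinning down the precise sign/convention in the semidirect product $(G\times G)\rtimes\Z/2$ so that all three of the action formula, the identity $x\lhd y = x\cdot\kappa(y)$, and the generation decomposition are simultaneously consistent — in particular making sure $\mathcal{G}_2$ is exactly (not merely contained in) the kernel of the action, which is what promotes the generation statement into the hypothesis (II) of Theorem \ref{keykantan1g}. Everything else is the ``routine discussion from Lemma \ref{keykantan1g}'' alluded to in the text.
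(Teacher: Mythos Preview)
Your proposal is correct and follows exactly the same route as the paper: the paper's entire proof is the phrase ``the routine discussion from Lemma \ref{keykantan1g} deduces the following,'' and what you have written is precisely that routine discussion spelled out --- verify the action descends to an effective action of $\mathcal{G}_1/\mathcal{G}_2$, check condition (I) via $x\cdot\kappa(y)=x\lhd y$, and invoke the displayed decomposition preceding the proposition for the generation part of (II). Your identification of the only real difficulty as convention-bookkeeping (in particular pinning down which element of $\Z/2$ corresponds to which value of $2\sigma-1$, and that $\mathcal{G}_2$ is exactly the kernel of the action) is also accurate.
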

\begin{proof}
Consider the map $\kappa: X \rightarrow \mathcal{G}_1 /\mathcal{G}_2 $ which sends $g$ to $[(g,g^{-1},-1)]$. We claim that this $\mathcal{G}_1 /\mathcal{G}_2 $ is generated by the image $ \Im (\kappa).$
Actually, we can easily verify that
any element $(g,h, \sigma )$ in $ \mathcal{G}_1 $ with $g_i , \ h_i \in G$ and $g h= g_1 h_1g_1^{-1} h_1^{-1} \cdots g_m h_mg_m^{-1} h_m^{-1} $
is decomposed as
$$ \kappa(1_G)^{\frac{\sigma +1}{2}} \cdot \kappa( g h^{-1}) \cdot
\Bigl( \bigl(\kappa(g_1 h_1)\cdot \kappa(1_G)\cdot \kappa(g_1^{-1})\cdot \kappa(h_1) \bigr) \cdots \bigl(\kappa(g_m h_m)\cdot \kappa(1_G)\cdot \kappa(g_m^{-1})\cdot \kappa(h_m) \bigr) \Bigr).
$$
Then, the routine discussion from Lemma \ref{keykantan1g} completes the proof.
\end{proof}
This proposition implies the difficulty to determine $\mathrm{Inn}(X)$, in general.
Thus, it also seems hard to determine $\mathrm{As} (X)$. Actually, even if $X$ is a connected core quandle,
Proposition \ref{eahge33s} implies that the kernel $\Ker (\psi)$ is complicated by the reason of the second homology $H_2^{\rm gr}(G)$ and $ H_2^{\rm gr}(\mathrm{Inn}(X))$.
For example, if $ X$ is
the product of $h$-copies of the cyclic group $\Z/m $, i.e., $X$ is the Alexander quandle of the form $(\Z/m)^h [T]/(T+1)$,
then the kernel $\Ker (\psi)$ stated in Proposition \ref{H2dfg31sha} is not so simple.

\section{On quandle coverings}\label{SScon32}

This section suggests that the results in section 2 are applicable to quandle coverings.

Let us review coverings in the sense of Eisermann \cite{Eis2,Eis3}.
A map $f: Y \rightarrow Z$ between quandles is {\it a (quandle) homomorphism}, if $f(a\lhd b)=f(a)\lhd f(b)$ for any $a,b \in Y$.
Furthermore, a quandle epimorphism $p: Y \rightarrow Z$ is a {\it (quandle) covering},
if the equality $ p( \widetilde{x})=p(\widetilde{y}) \in Z $ implies $ \widetilde{a} \lhd \widetilde{x} = \widetilde{a} \lhd \widetilde{y} \in Y $ for any $ \widetilde{a}, \ \widetilde{x}, \ \widetilde{y} \in Y$.

Let us mention a typical example.
Given a connected quandle $X$ with $a \in X$,
recall the abelianization $\varepsilon_0 :\mathrm{As} (X) \rightarrow \Z$ in \eqref{epsilon}.
Then, the kernel $\Ker (\varepsilon_0) $ has 
a quandle operation defined by setting
$$ g \lhd h := e_a^{-1} g h^{-1} e_a h \ \ \ \ \ \ \ \mathrm{for} \ g,h \in \Ker(\varepsilon_0) . $$
We can easily see the independence of the choice of $a \in X$ up to quandle isomorphisms.
Ones write $\X$ for the quandle $(\Ker (\varepsilon_0), \lhd)$, which is
considered in \cite[\S 7]{Joy}. When $X$ is of type $t_X $, so is the extended one $\X$ by Lemma \ref{daiji}. 
Furthermore, using the restricted action $ X \curvearrowleft \Ker (\varepsilon_0) \subset \mathrm{As} (X)$,
we see that the map $p: \X \rightarrow X$ sending $g$ to $a \cdot g$ is a covering.
This $p$ is called {\it the universal (quandle) covering of }$X$, according to \cite[\S 5]{Eis2}. 

As a preliminary, we will explore some properties of quandle coverings.
\begin{prop}\label{lemwl}
For any quandle covering $p: Y \rightarrow Z$, the induced group surjection $p_* : \mathrm{As} (Y ) \rightarrow \mathrm{As} (Z)$ is a central extension.
Furthermore, if $Y$ and $Z$ are connected and $Z$ is of type $t_Z $, then the abelian kernel $\Ker(p_*)$ is annihilated by $t_Z $.

\end{prop}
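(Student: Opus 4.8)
The plan is to analyze the surjection $p_* : \As(Y) \ra \As(Z)$ in two stages, first establishing centrality and then bounding the kernel. For centrality, I would take a generator $e_{\widetilde{x}}$ of $\As(Y)$ and an arbitrary generator $e_{\widetilde{y}}$, and compute the conjugate $e_{\widetilde{y}}^{-1} e_{\widetilde{x}} e_{\widetilde{y}} = e_{\widetilde{x} \lhd \widetilde{y}}$ using \eqref{hasz2}. The covering condition says precisely that $p(\widetilde{y}) = p(\widetilde{y}')$ forces $\widetilde{a} \lhd \widetilde{y} = \widetilde{a} \lhd \widetilde{y}'$ for all $\widetilde{a}$; hence if $g \in \Ker(p_*)$, then for any generator $e_{\widetilde{y}}$ lifting an element of $Z$, conjugation of $g$ by $e_{\widetilde{y}}$ depends only on $p(\widetilde{y})$. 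Since $g$ maps to $1$ in $\As(Z)$, I would like to conclude that this conjugation action is trivial; the key point is that an element of $\Ker(p_*)$, written as a word in the $e_{\widetilde{y}}$, is fixed under conjugation by each $e_{\widetilde{y}}$ because in $\As(Z)$ the corresponding relation holds and the covering property lets one lift the triviality. More carefully, I would show $e_{\widetilde{y}}^{-1} g\, e_{\widetilde{y}} = e_{\widetilde{y}'}^{-1} g\, e_{\widetilde{y}'}$ whenever $p(\widetilde{y}) = p(\widetilde{y}')$, and combine this with the fact that $g$ already commutes in the quotient to get that $g$ is central in $\As(Y)$.

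For the quantitative part, I would invoke Theorem \ref{cccor1} for both connected quandles. Since $Y$ and $Z$ are connected, and $Z$ is of type $t_Z$, Theorem \ref{cccor1} gives that $\Ker(\psi_Z)$ is $[1/t_Z]$-isomorphic to $\Z \oplus H_2^{\rm gr}(\mathrm{Inn}(Z))$ and that $H_2^{\rm gr}(\As(Z))$ is annihilated by $t_Z$. The strategy is to relate the three central extensions $\As(Y) \to \mathrm{Inn}(Y)$, $\As(Z) \to \mathrm{Inn}(Z)$, and $\As(Y) \to \As(Z)$. I would use that the composite $\As(Y) \xrightarrow{p_*} \As(Z) \xrightarrow{\psi_Z} \mathrm{Inn}(Z)$ factors the natural map $\psi_Y$ followed by a map $\mathrm{Inn}(Y) \to \mathrm{Inn}(Z)$ induced by the covering; one then has an exact sequence relating $\Ker(p_*)$ to $\Ker(\psi_Y)$ and $\Ker(\psi_Z)$. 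Comparing abelianizations, $\As(Y)_{\rm ab} \cong \Z \cong \As(Z)_{\rm ab}$ by connectivity and \eqref{hasz2}, and $p_*$ induces an isomorphism on abelianizations (it sends $e_{\widetilde{x}} \mapsto e_{p(\widetilde{x})}$, compatibly with $\e$); hence $\Ker(p_*)$ is perfect-adjacent, and more precisely $\Ker(p_*) \subset [\As(Y), \As(Y)]$. Then the five-term exact sequence in homology for the central extension $1 \to \Ker(p_*) \to \As(Y) \to \As(Z) \to 1$ yields a surjection $H_2^{\rm gr}(\As(Z)) \twoheadrightarrow \Ker(p_*)$ (using that the map on $H_1$ is an isomorphism, so the image of $\Ker(p_*) \to H_1(\As(Y))$ is zero). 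Since $H_2^{\rm gr}(\As(Z))$ is annihilated by $t_Z$ by Theorem \ref{cccor1}, so is its quotient $\Ker(p_*)$.

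The main obstacle, I expect, is the centrality argument: unlike in \eqref{AI} where centrality of $\Ker(\psi_X)$ follows immediately from \eqref{hasz2} because $\psi_X$ records the full permutation action, here $p_*$ is a more subtle map between two adjoint groups, and one must genuinely use the defining property of a \emph{covering} (not merely a quandle epimorphism) to see that conjugation by $e_{\widetilde{y}}$ on $\Ker(p_*)$ is insensitive to the choice of lift $\widetilde{y}$, and then bootstrap this to full centrality. Concretely, I would argue: given $g \in \Ker(p_*)$, for each $z \in Z$ pick a lift $\widetilde{y}$ with $p(\widetilde{y}) = z$; conjugation $g \mapsto e_{\widetilde{y}}^{-1} g e_{\widetilde{y}}$ is well-defined independent of the lift by the covering condition applied through \eqref{hasz2} (since $g$ is a product of $e_{\widetilde{a}}^{\pm 1}$ and $e_{\widetilde{a}\lhd\widetilde{y}}$ depends only on $z$); this defines an action of $\As(Z)$ on $\Ker(p_*)$ factoring the conjugation action of $\As(Y)$; but $g$ maps to $1$ in $\As(Z)$, so in particular the induced action fixes the class of $g$, and a short diagram chase (or direct induction on word length) upgrades this to $e_{\widetilde{y}}^{-1} g e_{\widetilde{y}} = g$ for all $\widetilde{y}$, i.e. $g$ is central. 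Once centrality is in hand, the homological bound is routine given Theorem \ref{cccor1}.
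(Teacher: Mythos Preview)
Your approach is essentially the paper's. For the annihilation statement, your argument---apply the five-term (inflation--restriction) exact sequence to the central extension $1\to\Ker(p_*)\to\As(Y)\to\As(Z)\to 1$, use that $p_*$ induces an isomorphism on $H_1^{\gr}\cong\Z$ by connectivity, and then invoke Theorem~\ref{cccor1} to see that $H_2^{\gr}(\As(Z))$ surjects onto $\Ker(p_*)$ and is killed by $t_Z$---is exactly what the paper does. The preliminary detour through $\mathrm{Inn}(Y)$, $\mathrm{Inn}(Z)$ and $\Ker(\psi_Y),\Ker(\psi_Z)$ is unnecessary, as you yourself end up not using it.

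For centrality you have the right key computation: by the covering property, $e_{\widetilde{y}}^{-1} e_{\widetilde{a}} e_{\widetilde{y}}=e_{\widetilde{a}\lhd\widetilde{y}}$ depends only on $p(\widetilde{y})$, so the inner-automorphism map $\As(Y)\to\mathrm{Aut}(\As(Y))$ factors through $p_*$. But your concluding sentence is stated the wrong way round. From ``$g$ maps to $1$ in $\As(Z)$'' one cannot deduce that the $\As(Z)$-action \emph{fixes} $g$; that is precisely what remains to be shown, and no ``diagram chase'' or ``induction on word length'' will produce it from that premise. The correct deduction is dual and immediate: since conjugation on $\As(Y)$ factors through $\As(Z)$ and $p_*(g)=1$, conjugation \emph{by} $g$ is the identity automorphism of $\As(Y)$, i.e.\ $g$ is central. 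The paper packages the same observation more concretely: for any two lifts $y_i,\mathfrak{s}(y)\in p^{-1}(y)$ the element $e_{\mathfrak{s}(y)}e_{y_i}^{-1}$ commutes with every generator $e_b$ (same computation as yours), so $\As(Y)$ is generated by a section $\{e_{\mathfrak{s}(y)}:y\in Z\}$ together with these central elements, and hence $p_*$ is a central extension.
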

\begin{proof}
Fix a section $\mathfrak{s}: Y \rightarrow Z$.
For any $y \in Z$, put arbitrary $y_i \in p^{-1}(y)$.
Then,
$$ e_{\mathfrak{s}(y) }^{-1} e_{ b} e_{\mathfrak{s}(y) } = e_{ b \lhd \mathfrak{s}(y)} = e_{ b \lhd y_i } = e_{y_i }^{-1}e_{ b} e_{y_i } \in \mathrm{As} (Y) $$
for any $b \in Y$.
Here the second equality is due to the covering $p$. Denoting $e_{\mathfrak{s}(y) } e_{y_{i}}^{-1} $ by $z_{i}$,
the equalities imply that $z_{i}$ is central in $\mathrm{As}(Y)$. Since $e_{\mathfrak{s}(y) } = z_{i}e_{y_{i}} $, $\mathrm{As} (Y)$ is generated by $ e_{\mathfrak{s}(y) }$ with $y \in Y$ and the central elements $z_{i}$ associated with $y_i \in p^{-1}(y)$;
consequently, the surjection $p_*$ is a central extension.

We will show the latter part. Take the inflation-restriction exact sequence, i.e.,
$$ H_2^{\mathrm{gr}}( \mathrm{As} (Z ) ) \stackrel{\ }{\lra} \Ker(p_*) \lra H_1^{\mathrm{gr}}( \mathrm{As} (Y ) ) \stackrel{ }{\lra} H_1^{\mathrm{gr}}( \mathrm{As} (Z ) ) \lra 0 \ \ \ \ (\mathrm{exact}).$$
By connectivities the third map from $H_1^{\mathrm{gr} }(\mathrm{As}(Y)) =\Z$ is an isomorphism.
Since Theorem \ref{cccor1} says that $H_2^{\mathrm{gr}}( \mathrm{As} (Z ) ) $ is annihilated by $t_Z $, so is the kernel $\Ker(p_*)$ as desired. 
\end{proof}

Next, we will compute the second homology of $\X$ (Theorem \ref{proof11ti82}) by showing propositions:
\begin{prop}\label{lem11ti}For any connected quandle $X $,
the extended one $\X$ above is also connected.
\end{prop}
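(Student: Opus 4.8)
The plan is to show that $\mathrm{Inn}(\X)$ acts transitively on $\X$, by proving that the $\mathrm{Inn}(\X)$-orbit of the identity element $1 \in \Ker(\varepsilon_0) = \X$ is all of $\X$. Throughout write $G = \As(X)$ and $N = \Ker(\varepsilon_0)$; since $X$ is connected, $\varepsilon_0 \colon G \to \Z$ satisfies $\varepsilon_0(e_x) = 1$ for every $x \in X$, so $N$ is a normal subgroup, $\varepsilon_0$ is onto, and every $g \in G$ can be written $g = e_a^{k} n$ with $k = \varepsilon_0(g) \in \Z$ and $n \in N$, where $a \in X$ is the base point fixed in the definition of $\X$.

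First I would reformulate the translations of $\X$. For $g, h \in N$ the defining operation rewrites, using \eqref{hasz2}, as $g \lhd h = e_a^{-1} g h^{-1} e_a h = e_a^{-1} g \,(h^{-1} e_a h) = e_a^{-1} g \, e_{a \cdot h}$; more uniformly, for $\epsilon \in \{+1,-1\}$ the $\epsilon$-th power of the translation $\bullet \lhd h$ sends $g$ to $e_a^{-\epsilon} g \, e_{a \cdot h}^{\epsilon}$. Applying a composite of such maps to $1$ then telescopes — the powers of $e_a$ all slide to the left and add up — so the composite indexed by $(h_1,\epsilon_1),\dots,(h_n,\epsilon_n)$ carries $1$ to $e_a^{-(\epsilon_1+\cdots+\epsilon_n)} \, e_{a\cdot h_1}^{\epsilon_1}\cdots e_{a\cdot h_n}^{\epsilon_n}$.

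The key step, and the one needing the most care, is to check that the restricted action $X \curvearrowleft N$ is already transitive, so that in the formula above the coordinates $a \cdot h_i$ can be prescribed to be arbitrary elements of $X$, not merely single translates $a \lhd x$. For this I would use that $a \cdot e_a = a \lhd a = a$ (axiom (i)), hence $a \cdot e_a^{k} = a$ for all $k$, so that every $g = e_a^{k} n \in G$ satisfies $a \cdot g = a \cdot n$; thus every $G$-translate of $a$ is already an $N$-translate of $a$, and by connectivity of $X$ the $N$-orbit of $a$ is therefore all of $X$.

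Finally, given an arbitrary $v \in N = \X$, I would choose a word $v = e_{x_1}^{\epsilon_1}\cdots e_{x_n}^{\epsilon_n}$ in the generators of $G$; applying $\varepsilon_0$ shows that its exponent sum $\epsilon_1 + \cdots + \epsilon_n$ equals $\varepsilon_0(v) = 0$. Picking $h_i \in N$ with $a \cdot h_i = x_i$ (possible by the previous step) and substituting $(h_1,\epsilon_1),\dots,(h_n,\epsilon_n)$ into the telescoping formula shows that the corresponding composite of translations of $\X$ sends $1$ to $e_a^{0}\, e_{x_1}^{\epsilon_1}\cdots e_{x_n}^{\epsilon_n} = v$. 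Hence $v$ lies in the $\mathrm{Inn}(\X)$-orbit of $1$; since $v$ was arbitrary, $\X$ is connected.
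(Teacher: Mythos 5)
Your proof is correct and follows essentially the same route as the paper: write $v \in \Ker(\varepsilon_0)$ as a word of exponent sum zero, use connectivity of $X$ to realize each $e_{x_i}$ as a conjugate of $e_a$, and telescope the resulting composite of translations applied to $1_{\X}$. Your extra observation that $a \cdot e_a = a$, so the translating elements $h_i$ can be chosen inside $\Ker(\varepsilon_0) = \X$ itself, makes explicit a point the paper's proof glosses over (there the translations are taken by general $g_i \in \As(X)$ rather than by elements of the quandle $\X$).
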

\begin{proof}
It is enough to show that the identity $1_{\X} \in \X= \Ker (\varepsilon_0)$ is transitive to any element $h$ in $\X$.
Expand $h \in \X \subset \mathrm{As}(X)$ as $h =e_{x_1}^{ \epsilon_1} \cdots e_{x_n}^{\epsilon_n}$ for some $x_i\in X$ and $ \epsilon_i \in \Z$.
Since $h \in \Ker (\varepsilon_0) $, note $\sum \epsilon_i =0 $.
The connectivity of $X$ ensures some $g_i \in \mathrm{As}(X) $ so that $a \cdot g_i^{\epsilon_i} = x_i$.
Therefore $ g_i^{- \epsilon_i} e_a g_i^{\epsilon_i} = e_{a \cdot g_i^{\epsilon_i} } =e_{x_i}^{\epsilon_i}$ by (\ref{hasz2}). In the sequel, we have
$$\bigl( \cdots ( 1_{\X} \lhd^{\epsilon_1} g_1) \cdots \lhd^{\epsilon_n} g_n\bigr) = e_a^{\sum \epsilon_i} 1_{\X} ( g_1^{- \epsilon_1} e_a g_1^{\epsilon_1} ) \cdots (g_n^{- \epsilon_n} e_a g_n^{\epsilon_n})= e_{x_1}^{\epsilon_1} \cdots e_{x_n}^{\epsilon_n} =h.$$
These equalities in $\X $ imply the transitivity of $\X $
\end{proof}

\begin{prop}\label{proof11ti}
Let $X$ be a connected quandle.
Let $p_* : \mathrm{As} (\X ) \rightarrow \mathrm{As} (X) $ be the epimorphism induced from the covering $p : \X \rightarrow X $.
Then, under the canonical action of $\mathrm{As} (\X)$ on $\X$, the stabilizer $\mathrm{Stab}(1_{\X} )$ of $1_{\X} $ is equal to
$ \Z \times \Ker (p_*)$ in $\mathrm{As} (\X)$. 
Furthermore, the summand $\Z$ is generated by $1_{\X} $. \end{prop}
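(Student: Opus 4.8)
The plan is to describe $\mathrm{Stab}(1_{\X})$ inside $\As(\X)$ by combining two facts: first, that $\X$ is connected (Proposition \ref{lem11ti}), so we can apply Eisermann's description of stabilizers and the general structure results; and second, that the covering map $p_*: \As(\X) \ra \As(X)$ is a central extension (Proposition \ref{lemwl}), whose kernel $\Ker(p_*)$ lands in the stabilizer of every point. First I would observe that $\Ker(p_*) \subset \mathrm{Stab}(1_{\X})$: an element of $\Ker(p_*)$ acts trivially on $X$ via the induced action, hence in particular fixes $1_{\X}$, since $p(1_{\X} \cdot k) = p(1_{\X}) \cdot p_*(k) = p(1_{\X})$ and combined with the covering property the action on the fibre is controlled; more directly, $\Ker(p_*)$ is central in $\As(\X)$ by Proposition \ref{lemwl}, and a central element that acts trivially on $X$ through the quotient must stabilize every element of $\X$ because of the covering. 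Separately, the element $e_{1_{\X}} \in \As(\X)$ satisfies $1_{\X} \cdot e_{1_{\X}} = 1_{\X} \lhd 1_{\X} = 1_{\X}$ by the quandle axiom (i), so $e_{1_{\X}}$ lies in $\mathrm{Stab}(1_{\X})$ and generates a copy of $\Z$ (it has infinite order, since $\varepsilon$ sends it to $1$).

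Next I would check that the subgroup generated by $e_{1_{\X}}$ and $\Ker(p_*)$ is their internal direct product $\Z \times \Ker(p_*)$: the intersection is trivial because $\Ker(p_*) \subset \Ker(\varepsilon)$ (as $p_*$ is compatible with the abelianizations $\Z \to \Z$, which is an isomorphism by connectivity, exactly as in the proof of Proposition \ref{lemwl}), whereas $\langle e_{1_{\X}}\rangle$ maps isomorphically onto $\Z$ under $\varepsilon$; and the two subgroups commute since $\Ker(p_*)$ is central. So we get an inclusion $\Z \times \Ker(p_*) \subset \mathrm{Stab}(1_{\X})$.

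The reverse inclusion is the substantive part. Take $h \in \mathrm{Stab}(1_{\X})$. Using $\varepsilon(h) = n \in \Z$, replace $h$ by $h \cdot e_{1_{\X}}^{-n}$ to reduce to the case $h \in \mathrm{Stab}(1_{\X}) \cap \Ker(\varepsilon)$; by Eisermann's theorem (Theorem \ref{ehyouj2i3}) applied to the connected quandle $\X$, the abelianization of this group is $H_2^Q(\X)$, but what I actually need is finer: I must show $h \in \Ker(p_*)$. The key point is that $p_*(h)$ lies in $\mathrm{Stab}(p(1_{\X})) \cap \Ker(\varepsilon) \subset \As(X)$, and I want to argue that this image is forced to be trivial — equivalently, that $p_*$ restricts to an injection on $\mathrm{Stab}(1_{\X}) \cap \Ker(\varepsilon)$. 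This should follow from the definition of $\X$ as $\Ker(\varepsilon_0) \subset \As(X)$ together with the explicit description of the universal covering: an element of $\mathrm{Stab}(1_{\X})$ that maps into $\mathrm{Stab}(p(1_{\X}))$ and has $\varepsilon = 0$ corresponds, under the identification of $\X$ with $\Ker(\varepsilon_0)$ and of the covering action with conjugation/translation in $\As(X)$, to a relation forcing it into $\Ker(p_*)$. Concretely, I expect to expand such an $h$ as a word in the $e_{y}$'s with $y \in \X$, push it down via $p_*$, use that the pushed-down word stabilizes $p(1_{\X}) = a$ and lies in $\Ker(\varepsilon_0)$, and then lift the resulting identity back using the central-extension structure from Proposition \ref{lemwl} to conclude $h \in \Ker(p_*)$.

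The main obstacle will be precisely this last injectivity/surjectivity bookkeeping: showing that nothing in $\mathrm{Stab}(1_{\X}) \cap \Ker(\varepsilon)$ escapes $\Ker(p_*)$. The danger is a spurious element of the stabilizer that maps to something nontrivial in $\mathrm{Stab}(a) \cap \Ker(\varepsilon_0) \subset \As(X)$; ruling this out requires using the \emph{specific} quandle structure $g \lhd h = e_a^{-1} g h^{-1} e_a h$ on $\X = \Ker(\varepsilon_0)$ rather than abstract covering theory, so I would carry out the identification of the $\As(\X)$-action on $1_{\X}$ with explicit products in $\As(X)$ (as in the proof of Proposition \ref{lem11ti}) and track when such a product fixes $1_{\X}$. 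Once that correspondence is pinned down, the equality $\mathrm{Stab}(1_{\X}) = \Z \times \Ker(p_*)$ with the $\Z$-summand generated by $e_{1_{\X}}$ (which acts as $\bullet \lhd 1_{\X}$) falls out.
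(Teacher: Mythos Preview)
Your plan will work once you carry out the explicit computation you allude to at the end, but you have tangled yourself up by looking at the wrong ``downstairs'' stabilizer. The object $\mathrm{Stab}(a)\cap \Ker(\varepsilon_0)\subset \As(X)$ that you worry about is a red herring: that group is typically large (its abelianization is $H_2^Q(X)$ by Theorem~\ref{ehyouj2i3}), and nothing forces $p_*(h)$ to avoid it. What actually matters is the stabilizer of $1_{\X}$ under the $\As(X)$-action on $\X$, not on $X$. Indeed, the covering condition says precisely that the canonical $\As(\X)$-action on $\X$ factors through $p_*$, so $\mathrm{Stab}_{\As(\X)}(1_{\X})=p_*^{-1}\bigl(\underline{\mathrm{Stab}}(1_{\X})\bigr)$, where $\underline{\mathrm{Stab}}(1_{\X})\subset \As(X)$ is the stabilizer for the factored action on $\X$. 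Using the formula $g\lhd h=e_a^{-1}g\,h^{-1}e_ah$ one checks that a word $w\in\As(\X)$ acts on $1_{\X}$ by $1_{\X}\cdot w=e_a^{-\varepsilon(w)}p_*(w)$, so $\underline{\mathrm{Stab}}(1_{\X})=\{e_a^n\}_{n\in\Z}\cong\Z$, which is much smaller than $\mathrm{Stab}(a)$.

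This is exactly how the paper proceeds, and it bypasses all of your ``reverse inclusion'' bookkeeping: once $\underline{\mathrm{Stab}}(1_{\X})=\Z$, the restriction $p_*\colon \mathrm{Stab}(1_{\X})\to\Z$ is a central extension of $\Z$ (Proposition~\ref{lemwl}), hence splits, and $e_{1_{\X}}$ (which maps to $e_a$) provides the section. Your decomposition into ``$\Z\times\Ker(p_*)\subset\mathrm{Stab}$'' plus a separate reverse inclusion is then unnecessary; both inclusions fall out of the single identification $\mathrm{Stab}(1_{\X})=p_*^{-1}(\Z)$. In short: replace the paragraph about $\mathrm{Stab}(p(1_{\X}))$ with the one-line computation of $\underline{\mathrm{Stab}}(1_{\X})$ and the proof collapses to three sentences.
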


\begin{proof}
We can easily see that
the stabilizer of $1_{\X} $ via the previous action $ \Ker (\varepsilon_0)= \X \curvearrowleft \mathrm{As}(X)$ is $ \underline{\mathrm{Stab}}(1_{\X} ) = \{ e_a^{n }\}_{n \in \Z} \subset \mathrm{As}(X)$ exactly.
Notice that any central extension of $\Z$ is trivial; therefore, since $p_*$ is a central extension (Proposition \ref{lemwl}),
the restriction $p_*: \mathrm{Stab}(1_{\X}) \rightarrow \underline{\mathrm{Stab}}(1_{\X}) =\Z$ implies the required identity
$ \mathrm{Stab}(1_{\X} ) = \Z \times \Ker (p_*) $.
\end{proof}
\begin{thm}\label{proof11ti82}
The second quandle homology of the extended quandle $\X$ is isomorphic to the kernel of $p_* : \mathrm{As} (\X ) \rightarrow \mathrm{As} (X) $. Namely, $H_2^Q(\X) \cong \Ker (p_*)$.
In particular, it follows from Proposition \ref{lemwl} that, if $t_X < \infty $, then $H_2^Q(\X) $ is annihilated by the type $t_X $.
\end{thm}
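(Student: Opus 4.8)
The plan is to combine the structural results already established for the universal covering $\X$ with Eisermann's Theorem \ref{ehyouj2i3} applied to the connected quandle $\X$. By Proposition \ref{lem11ti}, $\X$ is connected, so Theorem \ref{ehyouj2i3} applies: fixing the basepoint $1_{\X} \in \X$, the second quandle homology $H_2^Q(\X)$ is isomorphic to the abelianization of $\mathrm{Stab}(1_{\X}) \cap \Ker(\varepsilon_{\X})$, where $\varepsilon_{\X}: \As(\X) \ra \Z$ is the abelianization homomorphism of \eqref{epsilon} and $\mathrm{Stab}(1_{\X})$ is the stabilizer in $\As(\X)$ under the canonical action.

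First I would identify $\mathrm{Stab}(1_{\X}) \cap \Ker(\varepsilon_{\X})$ precisely. Proposition \ref{proof11ti} tells us that $\mathrm{Stab}(1_{\X}) = \Z \times \Ker(p_*)$ inside $\As(\X)$, with the $\Z$-summand generated by $e_{1_{\X}}$ (the generator corresponding to the element $1_{\X} \in \X$). Now $\varepsilon_{\X}$ sends $e_{1_{\X}}$ to $1 \in \Z$, so it restricts to an isomorphism on the $\Z$-summand; on the other hand $\Ker(p_*)$ is central in $\As(\X)$ by Proposition \ref{lemwl}, and being central and contained in the kernel of the surjection onto $\As(X)$, each of its generators is a product of commutator-type elements $z_i = e_{\mathfrak{s}(y)} e_{y_i}^{-1}$ appearing in the proof of Proposition \ref{lemwl}, each of which has $\varepsilon_{\X}$-value zero. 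Hence $\Ker(p_*) \subset \Ker(\varepsilon_{\X})$, and intersecting $\Z \times \Ker(p_*)$ with $\Ker(\varepsilon_{\X})$ kills exactly the $\Z$-factor, giving $\mathrm{Stab}(1_{\X}) \cap \Ker(\varepsilon_{\X}) = \Ker(p_*)$.

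Next, since $\Ker(p_*)$ is central in $\As(\X)$, it is in particular abelian, so its abelianization is itself; therefore Theorem \ref{ehyouj2i3} yields $H_2^Q(\X) \cong \Ker(p_*)$, which is the first assertion. For the final sentence, I invoke Proposition \ref{lemwl} directly: both $\X$ and $X$ are connected (Proposition \ref{lem11ti} for $\X$, hypothesis for $X$), and $\X$ is of type $t_X$ whenever $X$ is (as noted in the discussion preceding Proposition \ref{lemwl}, using Lemma \ref{daiji}); thus the abelian kernel $\Ker(p_*)$ is annihilated by $t_X$, and so is $H_2^Q(\X)$.

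The one place that needs care — the main obstacle — is verifying cleanly that $\Ker(p_*) \subset \Ker(\varepsilon_{\X})$ and that the intersection with $\Ker(\varepsilon_{\X})$ removes precisely the $\Z$-summand and nothing more; this rests on the explicit generators $z_i$ of $\Ker(p_*)$ from Proposition \ref{lemwl} lying in $\Ker(\varepsilon_{\X})$ (each $z_i = e_{\mathfrak{s}(y)}e_{y_i}^{-1}$ visibly has total exponent $0$) together with the compatibility $\varepsilon_X \circ p_* = \varepsilon_{\X}$ of the two abelianization maps under the covering-induced surjection. Once that identification is pinned down, the rest is a direct application of the cited results.
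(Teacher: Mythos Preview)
Your proposal is correct and follows essentially the same approach as the paper: apply Eisermann's Theorem \ref{ehyouj2i3} to the connected quandle $\X$, use Proposition \ref{proof11ti} to identify $\mathrm{Stab}(1_{\X}) = \Z \times \Ker(p_*)$, and note that $\Ker(p_*)$ is abelian by Proposition \ref{lemwl}. Your write-up simply makes explicit the step $\mathrm{Stab}(1_{\X}) \cap \Ker(\varepsilon_{\X}) = \Ker(p_*)$ that the paper leaves tacit; the cleanest justification for $\Ker(p_*) \subset \Ker(\varepsilon_{\X})$ is the compatibility $\varepsilon_X \circ p_* = \varepsilon_{\X}$ you note at the end (which makes the generator argument via the $z_i$ unnecessary).
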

\begin{proof} Note that $\X $ is connected (Proposition \ref{lem11ti}) and the kernel $\Ker (p_*)$ is abelian (Proposition \ref{lemwl}).
Accordingly, the desired isomorphism
$H_2^Q(\X) \cong \bigl( \Ker (\varepsilon_{\X}) \cap \mathrm{Stab}(1_{\X}) \bigr)_{\mathrm{ab}} = \Ker (p_*)$
follows immediately from Proposition \ref{proof11ti} and
Theorem \ref{ehyouj2i3}. \end{proof}
Finally, we now discuss the third group homology.
\begin{prop}\label{kunosan}The universal covering $p: \X \rightarrow X$
induces a $[1/t_X]$-isomorphism $p_*: H_3^{\mathrm{gr}} (\mathrm{As}(\X)) \cong H_3^{\mathrm{gr}} (\mathrm{As}(X)) $.\end{prop}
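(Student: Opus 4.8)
The plan is to run the Lyndon--Hochschild--Serre spectral sequence of the central extension
$$ 0 \lra \Ker (p_*) \lra \As (\X) \xrightarrow{\ p_* \ } \As (X) \lra 0 $$
established in Proposition \ref{lemwl}, and to invert $t_X$ everywhere. Since $\Ker(p_*)$ is annihilated by $t_X$ (Proposition \ref{lemwl} again, using that both $\X$ and $X$ are connected with $X$ of type $t_X$, where the $\X$-side uses Proposition \ref{lem11ti} and the fact that $\X$ is also of type $t_X$), after localizing at any prime $\ell \nmid t_X$ the kernel vanishes, so the spectral sequence degenerates and $p_*$ becomes an isomorphism on all homology, in particular in degree $3$. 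This is exactly a $[1/t_X]$-isomorphism $p_*: H_3^{\gr}(\As(\X)) \cong H_3^{\gr}(\As(X))$.

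More carefully, I would first record that $\Ker(p_*)$ is a finitely generated abelian group (or at least a $t_X$-torsion abelian group) sitting centrally, so that $H_q^{\gr}(\Ker(p_*))$ is $t_X$-power torsion for $q\geq 1$; hence $E^2_{p,q} = H_p^{\gr}(\As(X); H_q^{\gr}(\Ker(p_*)))$ is $t_X$-torsion for all $q\geq 1$, while the bottom row $E^2_{p,0} = H_p^{\gr}(\As(X))$ carries the edge homomorphism induced by $p_*$. After applying $\otimes \Z_{(\ell)}$ for $\ell\nmid t_X$ — exact, since localization is flat — every row with $q\geq 1$ dies, the spectral sequence collapses onto the bottom row, and the edge map $H_n^{\gr}(\As(\X))_{(\ell)} \to H_n^{\gr}(\As(X))_{(\ell)}$ is an isomorphism for every $n$. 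Taking $n=3$ and recalling that a map of abelian groups that is an isomorphism after localizing at every $\ell \nmid t_X$ is by definition a $[1/t_X]$-isomorphism gives the claim. This is the same mechanism already used in the proof of Lemma \ref{dfgk232}, merely applied to the extension $p_*$ rather than to \eqref{AI}.

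The one point needing a sentence of care — and the only place an obstacle could hide — is the identification of the relevant map on $H_3$ with the edge homomorphism of the spectral sequence: one must check that the natural map $p_*: H_3^{\gr}(\As(\X)) \to H_3^{\gr}(\As(X))$ induced by the group homomorphism $p_*$ is indeed the composite of the projection onto $E^\infty_{3,0}$ with the inclusion $E^\infty_{3,0}\hookrightarrow E^2_{3,0}$, which is standard functoriality of the LHS spectral sequence for the extension with its quotient. Given that, no computation remains. I would therefore write the proof in two or three lines: invoke Proposition \ref{lemwl} for the central extension and the $t_X$-annihilation of $\Ker(p_*)$, invoke Proposition \ref{lem11ti} so that $\X$ is connected (hence the hypotheses of Proposition \ref{lemwl} apply with $Y=\X$, $Z=X$), and then cite the spectral sequence argument exactly as in Lemma \ref{dfgk232} to conclude the $[1/t_X]$-isomorphism in degree $3$.
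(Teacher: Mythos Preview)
Your proposal is correct and follows essentially the same route as the paper: both apply the Lyndon--Hochschild--Serre spectral sequence to the central extension $p_*:\As(\X)\to\As(X)$ of Proposition~\ref{lemwl}, using that $\Ker(p_*)$ is annihilated by $t_X$ so that every $E^2_{p,q}$ with $q\geq 1$ vanishes after inverting $t_X$ and the edge map in degree~$3$ becomes an isomorphism. The paper additionally cites Theorem~\ref{cccor1} to record that $H_2^{\gr}(\As(\X))$ and $H_2^{\gr}(\As(X))$ are $t_X$-torsion, but as your argument makes clear this extra input is not actually needed for the collapse.
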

\begin{proof}
By connectivity of $\X$ and Theorem \ref{cccor1}, $ H_2^{\mathrm{gr}} (\mathrm{As}(\X)) $ and $H_2^{\mathrm{gr}} (\mathrm{As}(X)) $ are annihilated by $t_X$.
Furthermore, since the epimorphism $p_*: \mathrm{As}(\X) \rightarrow \mathrm{As}(X)$ is a central extension whose kernel is annihilated by $t_X $ (Proposition \ref{lemwl}), we readily obtain the $[1/t_X]$-isomorphism $p_*: H_3^{\mathrm{gr}} (\mathrm{As}(\X)) \cong H_3^{\mathrm{gr}} (\mathrm{As}(X)) $ from the Lyndon-Hochschild sequence of $p_* $.
\end{proof}
These properties played a key role to prove the main theorem in \cite{Nos1}.

\section{Proof of Theorem \ref{cccor1}.}\label{kokoroni}

The purpose of this section is to prove Theorem \ref{cccor1}.
Let us begin by reviewing the rack space introduced by Fenn-Rourke-Sanderson \cite{FRS1}.
Let $X$ be a quandle with discrete topology.
We set up a disjoint union $ \bigcup_{n \geq 0} ([0,1]\times X)^n $, and consider
the relations given by
\begin{equation}\label{equ.relation} \! (t_1,x_1, \dots, x_{j-1},0,x_j,t_{j+1},\dots, t_n,x_n) \sim (t_1,x_1, \dots t_{j-1},x_{j-1},t_{j+1},x_{j+1},\dots, t_n, x_{n} ). \notag \end{equation}
$ \ \ (t_1,x_1, \dots, x_{j-1},1,x_j, t_{j+1}, \dots, t_n,x_n) \sim$
\[ \ \ \ \ \ \ \ \ \ \  ( t_1,x_1\lhd x_{j}, \dots, t_{j-1}, x_{j-1}\lhd x_{j},t_{j+1},x_{j+1},\dots, t_n,x_n), \]
Then, the {\it rack space} $BX$ is defined to be the quotient space.
By construction, we have a cell decomposition of $BX$ by
regarding the projection $ \bigcup_{n \geq 0} ([0,1]\times X)^n \rightarrow BX$
as characteristic maps.
From the 2-skeleton of $BX$, we have $\pi_1(BX) \cong \mathrm{As} (X)$.
Considering the Eilenberg-MacLane space $ K( \pi_1(BX),1) )$, 
we have the classifying map $c : BX \hookrightarrow K (\pi_1(BX),1) $, i.e., an conclusion obtained by killing the higher homotopy groups of $BX$.

\begin{thm}\label{centthm}
Let $X$ be a connected quandle of type $t $, and let $t < \infty $.
For $n
= 2$ and $3$, the induced map $c_*: H_n(BX ) \rightarrow H_n^{\rm gr} (\mathrm{As}(X)) $ is annihilated by $t$.
\end{thm}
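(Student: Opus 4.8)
The plan is to analyze the homology of the rack space $BX$ directly via its cell structure and the known relationship between the cellular chain complex of $BX$ and quandle homology. Recall that the cellular chain complex of $BX$ computes the rack homology $H_*^R(X)$, which fits into a splitting $H_n^R(X) \cong H_n^Q(X) \oplus H_{n-1}^Q(X)$ (in low degrees, via the degeneracy subcomplex); the degenerate part contributes the $H_{n-1}^Q$ summand and is related to the type. The key input to exploit is that $X$ is connected of type $t < \infty$: the element $e_x^t$ is central in $\As(X)$ (Lemma \ref{daiji}), and this $t$-th power phenomenon should be reflected on the level of the map $c_*$ by a chain homotopy after multiplying by $t$.

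First I would set up the comparison between the cellular chains of $BX$ and the bar resolution of $\As(X)$ realizing $c_*$: the $2$-cells of $BX$ are indexed by pairs $(x,y) \in X^2$ with attaching map reading off the relator $e_{x \lhd y}^{-1} e_y^{-1} e_x e_y$, so $H_2(BX)$ is generated by $2$-cycles that are combinations of such squares, and similarly $H_3(BX)$ comes from the $3$-cells indexed by $X^3$ encoding the third quandle axiom. Next I would recall (or re-derive) the standard fact that $c_* : H_n(BX) \to H_n^{\mathrm{gr}}(\As(X))$ is surjective for $n \le 2$ and that for $n=2,3$ its kernel/cokernel is governed by the degenerate subcomplex of rack homology — this is essentially Eisermann's analysis (Theorem \ref{ehyouj2i3} belongs to the same circle of ideas). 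The crucial point: after tensoring with $\Z[1/t]$, the type-$t$ condition forces the relevant degenerate classes to vanish, and conversely the obstruction to $c_*$ being zero (resp. an isomorphism in the appropriate range) is exactly $t$-torsion.

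The mechanism I would use to extract the factor $t$ is the following: because $X$ has type $t$, the loop $e_x^t$ bounds in a controlled way — concretely, the relation $x \lhd^t y = x$ gives, for each $y$, a $2$-chain in $BX$ exhibiting $t \cdot [\text{something}]$ as a boundary, and iterating the third quandle axiom produces the analogous $3$-dimensional identity. Packaging these into an explicit chain homotopy $D$ on cellular chains with $\partial D + D \partial = t \cdot (\text{id} - c_*\text{-correction})$ on $H_n$ for $n = 2, 3$ is the technical heart. So the steps in order are: (1) write down the cell structure of $BX$ through dimension $3$ and identify $c_*$ on chains; (2) invoke connectivity to reduce everything to a single basepoint orbit; (3) use the type-$t$ relation to build, degree by degree, the chains witnessing that $t$ kills the kernel of $c_*$ in degrees $2$ and $3$; (4) conclude that $c_* : H_n(BX) \to H_n^{\mathrm{gr}}(\As(X))$ is annihilated by $t$ for $n = 2, 3$.

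The main obstacle will be step (3) in degree $3$: in degree $2$ the type relation alone essentially does the job, but in degree $3$ one must combine the type-$t$ relation with the self-distributivity axiom $(a \lhd b) \lhd c = (a \lhd c) \lhd (b \lhd c)$ in a way that respects the cellular boundary maps, and keeping track of the resulting telescoping sum of $3$-cells without sign errors is delicate. I also expect a subtlety in separating the genuinely $3$-dimensional contribution from the suspension of the $2$-dimensional one (the degenerate summand), since it is precisely there that Remark \ref{df032} warns us the universal central extension can fail — so the argument must be careful to claim only annihilation by $t$, not an outright isomorphism, at the integral level.
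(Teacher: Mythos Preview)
Your overall instinct --- use the centrality of $e_x^t$ (Lemma \ref{daiji}) to build a chain homotopy that kills $t\cdot c_*$ --- is exactly the paper's strategy, but several intermediate steps in your plan are misdirected and, as written, would not produce a proof.

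First, the target of the chain homotopy is wrong. The map $c$ goes from the rack complex $C_*^R(X)$ to the bar complex $C_*^{\gr}(\As(X))$; what you need are maps $h_n: C_n^R(X)\to C_{n+1}^{\gr}(\As(X))$ satisfying $h_{n-1}\circ\partial^R_n + \partial^{\gr}_{n+1}\circ h_n = t\cdot c_n$ (or something chain-homotopic to it). Your displayed relation $\partial D + D\partial = t\cdot(\id - c_*\text{-correction})$ cannot be right because $\id$ is not a map between those two complexes, and your step (3) speaks of showing that $t$ ``kills the kernel of $c_*$'', which is a different (and for the theorem, irrelevant) statement from $t\cdot c_* = 0$. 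The paper simply writes down the $h_n$ explicitly as sums $\sum_{1\le j\le t-1}(\ldots,e_x^j,\ldots)$ in the bar complex and checks the identity by hand; the cancellation at the end uses only $e_x^t = e_y^t$ central, nothing else.

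Second, the rack/quandle splitting $H_n^R\cong H_n^Q\oplus H_{n-1}^Q$ and the degenerate subcomplex are red herrings here. That splitting compares rack homology with quandle homology, not with group homology of $\As(X)$; it does not control the kernel or cokernel of $c_*$, and the type-$t$ condition does not act through the degenerate classes in the way you describe. Likewise, there is no need to ``find $2$-chains in $BX$ exhibiting $t\cdot[\text{something}]$ as a boundary'': all the work happens on the group-homology side, where the relations $e_x e_y = e_y e_{x\tri y}$ and $e_x^t$ central let the telescoping sums collapse. If you discard the degenerate-complex discussion, correct the homotopy relation to $h\partial + \partial h = t\cdot c$, and aim your step (3) at that identity rather than at $\Ker(c_*)$, your outline becomes the paper's proof.
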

\begin{rem}\label{clarem} This is still more powerful and general than a result of Clauwens \cite[Proposition 4.4]{Cla},
which stated that, if a quandle $X$ of finite order satisfies a certain condition, then
the composite $(\psi_X)_* \circ c_*: H_n(BX ) \rightarrow H_n^{\rm gr} (\mathrm{As}(X)) \rightarrow H_n^{\rm gr} (\mathrm{Inn}(X)) $ is
annihilated by $|\mathrm{Inn}(X)|/|X|$ for any $n\in \mathbb{N} $.
Here note from Lemma \ref{lem11} that $t$ is a divisor of the order $|\mathrm{Inn}(X)|/|X|$.
\end{rem}

Since the induced map $c_*: H_2(BX ) \rightarrow H_2^{\rm gr} (\mathrm{As}(X)) $ with $n=2$ is known to be surjective (cf. Hopf's theorem \cite[II.5]{Bro}),
Theorem \ref{cccor1} is immediately obtained from Theorem \ref{centthm} and the inflation-restriction exact sequence of \eqref{AI}.
Hence, we may turn into proving Theorem \ref{centthm}.

To this end, we give a brief review of the rack and quandle homology.
Let $C_n^R(X)$ be the free right $\Z$-module generated by $ X^n$.
Define a boundary $\partial^R_n : C_n^R(X) \rightarrow C_{n-1}^R(X )$ by
\begin{eqnarray*}
\begin{array}{rl}
\partial^R_n ( x_1, \dots,x_n)= \sum_{1\leq i \leq n} (-1)^i\bigl(& ( x_1\lhd x_i,\dots,x_{i-1}\lhd x_i,x_{i+1},\dots,x_n) \\
&-( x_1, \dots,x_{i-1},x_{i+1},\dots,x_n) \bigr).
\end{array}
\end{eqnarray*}
The composite $\partial_{n-1}^R \circ \partial_n^R $ is known to be zero. The homology is denoted by $H^R_n(X)$ and is called {\it the rack homology}.
As is known, the cellular complex of the rack space $BX $ is isomorphic to the complex $(C_* ^R(X),\partial_*^R )$. In particular, we have the isomorphism $H_*(BX) \cong H_*^R(X) $.
Furthermore, following \cite{CJKLS}, let $C^D_n (X) $ be a submodule of $C^R_n (X)$ generated by $n$-tuples $(x_1, \dots,x_n)$
with $x_i = x_{i+1}$ for some $ i \in \{1, \dots, n-1\}$. It can be easily seen that the submodule $C^D_n (X) $ is a subcomplex of $ C^R_{n} (X). $
Then the {\it quandle homology}, $H^Q_n (X) $, is defined to be the homology of the quotient complex $C^R_n (X ) /C^D_n (X)$.

Furthermore,
we now observe concretely the map $c_*: H_n(BX ) \rightarrow H_n^{\rm gr} (\mathrm{As} (X)) $ for $n \leq 3$.
Let us recall the (non-homogenous) standard complex $ C^{\rm gr}_n (\mathrm{As}(X))$ of $\mathrm{As} (X)$; see e.g. \cite[\S I.5]{Bro}.
The map $c_*$ can be described in terms of their complexes.
In fact, Kabaya \cite[\S 8.4]{Kab} considered homomorphisms $c_n : C_n^R(X) \rightarrow C_n^{\rm gr} (\mathrm{As}(X)) $,
where the map $c_n$ for $n\leq 3$ are defined by setting
\begin{eqnarray*}
\begin{array}{rl}
c_1(x)=&e_x,\\
c_2(x,y)=&(e_x,e_y) -(e_y, e_{x\lhd y}),\\
c_3(x,y,z)=&(e_x,e_y,e_z ) -(e_x,e_z, e_{y \lhd z})+(e_y,e_z,e_A ) -(e_y ,e_{x \lhd y }, e_z )+ (e_z ,e_{x \lhd z },e_{y \lhd z} ) \\
&- (e_z,e_{y \lhd z} ,e_A ),
\end{array}
\end{eqnarray*}
where we denote $(x\lhd y ) \lhd z \in X $ by $A $ for short.
As is shown (see \cite[\S 8.4]{Kab}), the induced map on homology coincides with the map above $c_*$ up to homotopy.

We will construct a chain homotopy between $t \cdot c_n $ and zero, when $X$ is connected and of type $t$.
Define a homomorphism $h_i : C_i^R(X ) \rightarrow C_{i+1}^{\rm gr} (\mathrm{As}(X)) $ by setting
\[h_1(x)= \sum_{ 1 \leq j \leq t-1} \!\!(e_x, e_x^j ), \]
\[h_2(x,y)=\sum_{ 1 \leq j \leq t-1} \!\! (e_x, e_y, e_{x\lhd y}^j ) - (e_x, e_x^j, e_{y} ) - (e_y, e_{x \lhd y}, e_{x \lhd y}^j ) + (e_y, e_y^j, e_{y}) , \]
\[ h_3(x,y,z)= \sum_{ 1 \leq j \leq t-1}\!\! \bigl( \!(e_x, e_y, e_z, e_{A }^j ) - (e_x, e_z, e_{y \lhd z }, e_{A }^j ) - (e_x, e_y, e_{x \lhd y}^j , e_{z} ) - (e_y, e_{x \lhd y}, e_z, e_{A}^j ) \]
\[ \ \ \ \ \ \ \ \ \ \ + (e_x,e_z, e_{ x \lhd z }^j , e_{y \lhd z}) + (e_z ,e_{ x \lhd z } , e_{y \lhd z}, e_A^j) + (e_x,e_x^j , e_{ y } , e_{ z}) - (e_x,e_x^j , e_{ z } , e_{y \lhd z}) \]
\[ \ \ \ \ \ \ \ \ \ \ + (e_y,e_z , e_{ A} , e_{A}^j) - (e_z,e_{y \lhd z }, e_{ A} , e_{A}^j) - (e_z,e_{x \lhd z} , e_{ x \lhd z }^j , e_{y \lhd z}) + (e_y,e_{x \lhd y} , e_{ x \lhd y }^j , e_{z})\bigr) . \]

\begin{lem}\label{tthm21}
Let $X$ be as above. Then we have the equality $ h_1 \circ \partial_2^R - \partial_3^{\rm gr} \circ h_2 = t \cdot c_2 $.
\end{lem}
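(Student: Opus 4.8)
The plan is to prove the claimed equality by a direct computation in the standard (inhomogeneous, bar) complexes, organised so that almost everything cancels. The two inputs I would write down first are: the rack boundary $\partial_2^R(x,y) = (x) - (x\tri y)$, so that
$$ h_1\circ\partial_2^R(x,y) \;=\; \sum_{1\le j\le t-1}\bigl((e_x,e_x^j) - (e_{x\tri y},e_{x\tri y}^j)\bigr); $$
and the bar differential $\partial_3^{\rm gr}(g_1,g_2,g_3) = (g_2,g_3) - (g_1g_2,g_3) + (g_1,g_2g_3) - (g_1,g_2)$, which I would apply to each of the four families of $3$-chains making up $h_2(x,y)$ --- namely $(e_x,e_y,e_{x\tri y}^j)$, $(e_x,e_x^j,e_y)$, $(e_y,e_{x\tri y},e_{x\tri y}^j)$ and $(e_y,e_y^j,e_y)$ --- to obtain sixteen families of $2$-chains indexed by $j$.

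The crucial algebraic fact used throughout is the relation $e_x e_y = e_y\,e_{x\tri y}$ in $\As(X)$ (this is \eqref{hasz2} with $g = e_y$), together with its iterate $e_x e_y^{\,j} = e_y^{\,j}\,e_{x\tri^j y}$, equivalently $e_{x\tri y}^{\,j} = e_y^{-1}e_x^{\,j}e_y$, proved by induction on $j$. Using these to put every slot of the sixteen families into the normal form of powers of single generators, one sees that the \emph{mixed} chains cancel in pairs: the terms $(e_xe_y,e_{x\tri y}^j)$ produced by $\partial_3^{\rm gr}(e_x,e_y,e_{x\tri y}^j)$ and by $\partial_3^{\rm gr}(e_y,e_{x\tri y},e_{x\tri y}^j)$ kill each other, and likewise the terms $(e_x,e_x^je_y)$ produced by $\partial_3^{\rm gr}(e_x,e_y,e_{x\tri y}^j)$ and by $\partial_3^{\rm gr}(e_x,e_x^j,e_y)$. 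Indeed, the four terms of $h_2$ are arranged exactly so that this happens. What remains, for each $j$, is a sum of \emph{diagonal} chains $(e_z,e_z^{\,j})$ and $(e_z^{\,j},e_z)$ whose sums over $j=1,\dots,t-1$ telescope, collapsing to contributions at $j=1$ and $j=t$, plus the constant chain $(e_x,e_y) - (e_y,e_{x\tri y}) = c_2(x,y)$ occurring with multiplicity $t-1$ (the multiplicity combining with one further copy from the telescoping to give total multiplicity $t$).

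At the endpoint $j=t$ of the telescoped sums I would invoke the type hypothesis through Lemma \ref{daiji}: for a connected quandle of type $t$ the elements $e_x^{\,t}$, $e_y^{\,t}$ and $e_{x\tri y}^{\,t}$ are one and the same central element $z$. Consequently the \emph{crossed} endpoint contributions cancel, $(e_x^{\,t},e_y) - (e_y^{\,t},e_y) = (z,e_y)-(z,e_y)=0$ and $(e_y,e_{x\tri y}^{\,t}) - (e_y,e_y^{\,t})=0$, so that after this cancellation the entire output of $\partial_3^{\rm gr}h_2(x,y)$ is $-t\,c_2(x,y)$ together with precisely the chain $\sum_{1\le j\le t-1}\bigl((e_x,e_x^j) - (e_{x\tri y},e_{x\tri y}^j)\bigr)=h_1\partial_2^R(x,y)$; rearranging yields $h_1\circ\partial_2^R - \partial_3^{\rm gr}\circ h_2 = t\,c_2$. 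The only real difficulty here is bookkeeping --- there is nothing conceptually subtle, but one must be disciplined about the choice of normal form and about signs so that the pairwise cancellations, the telescoping, and the final matching against $h_1\partial_2^R$ are all transparent; I expect that organisational step to carry essentially all of the labour.
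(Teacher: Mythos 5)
Your proposal is correct and takes essentially the same route as the paper's own proof: a direct expansion of $\partial_3^{\rm gr}\circ h_2$ via the bar differential, simplified using the relation $e_xe_y=e_ye_{x\tri y}$ from \eqref{hasz2}, with the mixed terms cancelling in pairs, the remaining sums over $j$ telescoping, and the endpoint terms $(e_x^{t},e_y)-(e_y^{t},e_y)$ and $(e_y,e_{x\tri y}^{t})-(e_y,e_y^{t})$ killed by Lemma \ref{daiji}, leaving exactly $-t\cdot c_2+h_1\circ\partial_2^R$. This is precisely the computation carried out in the paper.
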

\begin{proof} Compute the both terms $ h_1 \circ \partial_2^R$ and $ \partial_3^{\rm gr} \circ h_2 $
in the left hand side as
\[ h_1 \circ \partial_2^R (x,y) = \sum (e_x, e_x^j) - (e_{ x \lhd y } , e_{x \lhd y}^j) .\]
\[ \partial_3^{\rm gr} \circ h_2 (x,y)= \partial_3^{\rm gr} \sum \bigl((e_x, e_y, e_{x \lhd y}^j) - (e_x, e_x^j, e_y) - (e_y, e_{ x \lhd y } , e_{x \lhd y}^j) +(e_y,e_y^j,e_y) \bigr) \]
\[ = \bigl( \sum ( e_y, e_{x \lhd y}^j) - (e_x e_y , e_{x \lhd y}^j) + (e_x, e_{ x}^j e_y ) - (e_x ,e_y )- (e_x^j ,e_y )+ (e_x^{j+1} ,e_y )- (e_x ,e_x ^j e_y ) \]
\[ \ \ \ \ \ + (e_x ,e_x^j ) -(e_{ x \lhd y } ,e_{ x \lhd y }^j) + (e_x e_y ,e_{ x \lhd y }^j) - (e_y ,e_{ x \lhd y }^{j+1}) + (e_y ,e_{ x \lhd y }) \bigr)+ ( e_y ,e_y^{t} )-(e_y^{t},e_y) \]
\[ =t \bigl( (e_y,e_{x \lhd y}) -(e_x,e_y)\bigr) + ( e_x^{t}, e_y )-( e_y, e_{x \lhd y}^{t} ) -( e_y^{t} , e_{ y})+ ( e_y, e_{ y}^{t}) + h_1 \circ \partial_2^R (x,y) \]
\[ =- t \cdot c_2(x,y) + h_1 \circ \partial_2^R (x,y) .\]
Here we use Lemma \ref{daiji} for the last equality. Hence, the equalities complete the proof.
\end{proof}

\begin{lem}\label{tthm23}Let $X$ be as above.
The difference $ h_2 \circ \partial_3^R - \partial_4^{\rm gr} \circ h_3 $ is chain homotopic to $ t \cdot c_3 $.
\end{lem}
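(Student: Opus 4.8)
The plan is to follow exactly the same strategy as in the proof of Lemma \ref{tthm21}, one dimension higher. The target identity is a chain-homotopy statement asserting that the two composites $h_2\circ\partial_3^R$ and $\partial_4^{\rm gr}\circ h_3$ from $C_3^R(X)$ to $C_3^{\rm gr}(\As(X))$ differ from $t\cdot c_3$ by a boundary of a chain homotopy (the reason the statement is phrased with ``chain homotopic'' rather than an equality on the nose is that in degree $3$ there are terms involving $e_A^t$, $e_x^t$, etc., which by Lemma \ref{daiji} are central but do not literally vanish in the bar complex — so one gets an honest equality only after composing with the chain homotopy, or one absorbs the discrepancy into a secondary homotopy $h'$). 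Concretely, I would first expand $h_2\circ\partial_3^R(x,y,z)$ by applying the rack boundary formula $\partial_3^R$ (three $i$-terms, each a difference of a ``twisted'' and an ``untwisted'' face) and then feeding each resulting triple into the definition of $h_2$; this produces a long alternating sum of $3$-cells in the bar complex, indexed by $1\le j\le t-1$.

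Second, I would compute $\partial_4^{\rm gr}\circ h_3(x,y,z)$ by applying the (nonhomogeneous) bar differential $\partial_4^{\rm gr}$ to each of the twelve families of $4$-tuples that make up $h_3$. Here one uses the standard group-homology boundary: for a $4$-chain $(a,b,c,d)$, $\partial_4^{\rm gr}(a,b,c,d)=(b,c,d)-(ab,c,d)+(a,bc,d)-(a,b,cd)+(a,b,c)$, together with the relation $e_{x\tri y}=e_y^{-1}e_xe_y$ in $\As(X)$ coming from \eqref{hasz2} to rewrite products like $e_xe_y$ or $e_ze_{y\tri z}$ in terms of the generators appearing in $c_3$. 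The bulk of the work is bookkeeping: most of the $j$-indexed terms telescope (as in Lemma \ref{tthm21}, the sums $\sum_{1\le j\le t-1}$ collapse because consecutive $j$ and $j+1$ terms cancel), leaving a boundary term $e_A^t$, $e_x^t$ or $e_{x\tri y}^t$ at $j=t$ and the term at $j=1$; the leftover ``$t$-fold'' terms are handled by Lemma \ref{daiji}, exactly as in the degree-$2$ case, and feed into the comparison with $h_2\circ\partial_3^R$ plus $t\cdot c_3$.

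Third, after these two expansions are lined up, I would verify that $h_2\circ\partial_3^R - \partial_4^{\rm gr}\circ h_3 - t\cdot c_3$ equals the boundary of an explicit auxiliary homotopy (or vanishes outright, depending on how one normalizes $h_3$) — this is the ``chain homotopic'' clause. I expect the main obstacle to be purely computational stamina and sign-tracking: the degree-$3$ Kabaya cochain $c_3$ already has six terms, $h_3$ has twelve, and $\partial_4^{\rm gr}$ has five terms, so a naive expansion generates on the order of a hundred bar-$3$-cells, and the cancellation is delicate. The conceptual content, however, is light: it is the same telescoping-plus-centrality mechanism established in Lemma \ref{tthm21}, and no new ideas are needed. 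One should organize the calculation by grouping terms according to which face of which simplex they come from, and use Lemma \ref{daiji} only at the very end to dispose of the $t$-th powers. Once Lemmas \ref{tthm21} and \ref{tthm23} are both in hand, the maps $h_1,h_2,h_3$ assemble into a chain homotopy exhibiting $t\cdot c_*$ as nullhomotopic in degrees $2$ and $3$, which is precisely Theorem \ref{centthm}.
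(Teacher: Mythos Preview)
Your plan matches the paper's approach: a brute-force expansion of $h_2\circ\partial_3^R$ and $\partial_4^{\rm gr}\circ h_3$, telescoping the $j$-sums, and invoking Lemma \ref{daiji} to handle the $t$-th powers. One point deserves correction, though: your claim that ``no new ideas are needed'' beyond Lemma \ref{tthm21} slightly understates what happens.

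In degree $2$, Lemma \ref{daiji} makes the residual terms cancel literally, giving an equality $h_1\partial_2^R-\partial_3^{\rm gr}h_2=t\cdot c_2$. In degree $3$ this does \emph{not} happen: after the expansion (using the relations $e_ze_A=e_{x\tri y}e_z$ and $e_{y\tri z}e_A=e_{x\tri z}e_{y\tri z}$ in $\As(X)$), the difference $t\cdot c_3-h_2\circ\partial_3^R-\partial_4^{\rm gr}\circ h_3$ evaluated at $(x,y,z)$ is a nonzero sum of bar $3$-cells involving $e_x^t$, $e_A^t$, $e_{x\tri y}^t$, $e_{x\tri z}^t$ and a leftover $\sum_j(e_y,e_y^j,e_y)-(e_{y\tri z},e_{y\tri z}^j,e_{y\tri z})$. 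The key observation---and this is the extra step you allude to only vaguely as ``an explicit auxiliary homotopy''---is that by Lemma \ref{daiji} all $t$-th powers $e_\bullet^t$ coincide, so this residual is \emph{independent of $x$}. Substituting $x=y$ and using that $c_3(y,y,z)=0$ (a direct check from the formula for $c_3$, since $y\tri y=y$) then shows the residual equals $-\partial_4^{\rm gr}\bigl(h_3(y,y,z)\bigr)$, i.e.\ it is itself a boundary. This is what produces the ``chain homotopic'' rather than ``equal to'' in the statement, and it is the one genuinely new wrinkle over the degree-$2$ case. You should build this $x$-independence-plus-degeneracy step explicitly into your argument rather than leaving it as an unspecified secondary homotopy.
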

\begin{proof} This is similarly proved by a direct calculation.
To this end, recalling the notation $A=(x \lhd y ) \lhd z$, we remark two identities
$$ e_z e_A = e_{x \lhd y} e_z , \ \ \ e_{y \lhd z}e_A = e_{x \lhd z} e_{y \lhd z} \in \mathrm{As} (X). $$
Using them, a tedious calculation can show that the difference $ (t \cdot c_3 - h_2 \circ \partial_3^R -\partial_4^{\rm gr} \circ h_3) (x,y,z)$ is equal to
\[ (e_y,e_z,e_A^{t}) - (e_x^{t},e_y,e_z)+( e_x^{t},e_z,e_{e \lhd z}) -(e_y,e_{x \lhd y}^{t})\]
\[ \ + (e_z ,e_{x \lhd z}^{t},e_{y \lhd z}) - (e_z ,e_{y \lhd z} ,e_A^{t} )+ \sum_{1\leq j \leq t-1} (e_y,e_y^j,e_y ) -(e_{y \lhd z},e_{y \lhd z}^j,e_{y \lhd z} ). \]
Note that this formula 
is independent of any $x \in X $ since the identity $(e_a)^{t} =(e_b)^{t} $ holds for any $a,b \in X$ by Lemma \ref{daiji}.
However, the map $c_3 (x,y,z)$ with $x=y$ is zero by definition.
Hence, the map $ t \cdot c_3 $ is null-homotopic as desired.
\end{proof}
\begin{proof}[Proof of Theorem \ref{centthm}] The map $t \cdot c_*$ are obviously null-homotopic by Lemmas \ref{tthm21} and \ref{tthm23}.
\end{proof}

\noindent
The proof was an ad hoc computation in an algebraic way; however the theorem should be easily shown by a topological method:

\vskip 0.7pc

\noindent
{\bf Problem} Does the $t $-vanishing of the map $ c_* : H_n(BX ) \rightarrow H_n^{\rm gr} (\mathrm{As}(X)) $ hold
for any $n \in \mathbb{N}$? Provide its topological proof. Further, how about the non-connected quandles?

\subsection*{Acknowledgment} The author 
is grateful to Toshiyuki Akita for useful discussions on Coxeter groups, and comments
errors in a previous version of this paper.
He also thanks the referee for careful reading the manuscript and for giving useful comments.


\begin{thebibliography}{99}

\normalsize 
\ifx\undefined\bysame
\newcommand{\bysame}{\leavevmode\hbox to3em{\hrulefill}\,}
\fi

\bibitem[Aki1]{Aki}
T. Akita, {\it Vanishing theorem for the $p$-local homology of Coxeter groups}, Bull London Math Soc (2016) {\bf 48}, 945--956.

\bibitem[Aki2]{Aki2}
\bysame, {\it The adjoint group of a Coxeter quandle}, preprint, at
arXiv:1702.07104.


\bibitem[AG]{AG}
N. Andruskiewitsch, M. Gra\~{n}a. {\it From racks to pointed Hopf algebras}. Adv. Math., {\bf 178} (2003), 177--243.



\bibitem[Bro]{Bro}
K. S. Brown,
{\it Cohomology of Groups},
Graduate Texts in Math., {\bf 87}, Springer-Verlag, New York, 1994.


\bibitem[CJKLS]{CJKLS}
J. S. Carter, D. Jelsovsky, S. Kamada, L. Langford, M. Saito,
{\it Quandle cohomology and state-sum invariants of knotted curves and surfaces},
Trans. Amer. Math. Soc. {\bf 355} (2003), 3947--3989.


\bibitem[Cla1]{Cla}
F.J.-B.J. Clauwens,
{\it The algebra of rack and quandle cohomology},
J. Knot Theory Ramifications {\bf 11} (2011), 1487--1535.


\bibitem[Cla2]{Cla2} \bysame,
{\it The adjoint group of an Alexander quandle}, arXiv:math/1011.1587

\bibitem[Eis1]{Eis3}
M. Eisermann, {\it Homological characterization of the unknot}, J. Pure Appl. Algebra {\bf 177} (2003), no. 2, 131--157.
\bibitem[Eis2]{Eis2}\bysame, 
{\it Quandle coverings and their Galois correspondence}, Fund. Math. {\bf 225} (2014), 103--167.



\bibitem[FM]{FM}
B. Farb, D. Margalit, {\it A primer on mapping class groups}, PMS {\bf 50}, Princeton
University Press, 2011.

\bibitem[FRS]{FRS1}
R. Fenn, C. Rourke, B. Sanderson,
{\it Trunks and classifying spaces},
Appl. Categ. Structures {\bf 3} (1995) 321--356.

\bibitem[FP]{FP}
Z. Fiedorowicz, S. Priddy, {\it Homology of classical groups over finite fields and their associated infinite loop spaces},
Lecture Notes in Mathematics, {\bf 674}, Springer, Berlin, 1978.



\bibitem[Fri]{Fri}
E. M. Friedlander, {\it Computations of $K$-theories of finite fields}, Topology {\bf 15} (1976), no. 1, 87--109.

\bibitem[Ger]{Ger} S. Gervais, {\it Presentation and central extension of mapping class groups}, Trans. Amer. Math. Soc. {\bf 348} (1996) 3097--3132.

\bibitem[How]{How}, R. B. Howlett, {\it On the Schur Multipliers of Coxeter Groups}, Journal of the London Mathematical Society, {\bf 38} (1988): 263--276.


\bibitem[Iva]{Iva} N. V. Ivanov, {\it Stabilization of the homology of Teichm\"uller modular groups}, Algebrai Analiz {\bf 1} (1989), 110--126 (Russian); translation in Leningrad Math. J. {\bf 1} (1990) 675--691.


\bibitem[Joy]{Joy}
D. Joyce, {\it A classifying invariant of knots, the knot quandle},
J. Pure Appl. Algebra {\bf 23} (1982) 37--65.

\bibitem[Kab]{Kab}
Y. Kabaya, {\it Cyclic branched coverings of knots and quandle homology}, Pacific Journal of Mathematics, {\bf 259} (2012), No. 2, 315--347.

\bibitem[McC]{McC}
J. McCleary,
{\it A user's guide to spectral sequences},
Second edition. Cambridge Studies in Advanced Mathematics, {\bf 58}.
Cambridge University Press, Cambridge, 2001.


\bibitem[N1]{Nos1}
T. Nosaka, {\it Homotopical interpretation of link invariants from finite quandles}, Topology Appl. {\bf 193} (2015) 1--30.

\bibitem[N2]{Nos2}
\bysame, {\it Longitudes in $SL_2$-representations of link groups and Milnor-Witt $K_2 $-groups of fields}, Annals of $K$-Theory, Vol. 2 (2017), No. 2, 211--233.

\bibitem[N3]{Nos3}
\bysame, {\it Finite presentations of centrally extended mapping class groups}, preprint available at arXiv:1408.4068

\bibitem[PR]{PR}L. Paris, D. Rolfsen, {\it
Geometric subgroups of mapping class groups,} J. Reine Angew. Math. {\bf 521} (2000), 47--83.

\bibitem[Sus]{Sus}
A. A. Suslin, {\it Torsion in $K _2$ of fields}, $K$-Theory, {\bf 1}(1): 5--29, 1987.

\bibitem[Y]{Y}
D. N. Yetter, {\it Quandles and monodromy}, J. Knot Theory Ramifications {\bf 12} (2003), 523--541.

\bibitem[Zab]{Zab}
J. Zablow, {\it On relations and homology of the Dehn quandle}. Algebr. Geom.
Topol. {\bf 8} (2008), no. 1, 19--51.



\end{thebibliography}
\end{document}